\definecolor{bluex}{Hsb}{220, 1, 1}
\newcommand{\borderref}[2]{{\hypersetup{linkcolor=black}\hyperref[#1]{#1 #2}}}
\newcommand{\arxiv}[2][]{\ifthenelse{\equal{#1}{}}
{\href{http://arxiv.org/abs/#2}{\tt arXiv:#2}}
{\href{http://arxiv.org/abs/math/#2}{\tt arXiv:math.#1/#2}}}
\theoremstyle{plain}
\newtheorem{theorem}{Theorem}[section]
\newtheorem*{theorem*}{Theorem}
\newtheorem{mainthm}{Theorem}
\newenvironment{cj}[1]
  {\innercj}
  {\endinnercj}
\newtheorem{lemma}[theorem]{Lemma}
\newtheorem{proposition}[theorem]{Proposition}
\newtheorem{corollary}[theorem]{Corollary}
\newtheorem*{corollary*}{Corollary}
\newtheorem{conjecture}[theorem]{Conjecture}
\newtheorem*{problem*}{Problem}
\newtheorem*{addendum*}{Addendum}
\theoremstyle{definition}
\newtheorem{remark}[theorem]{Remark}
\newtheorem*{remark*}{Remark}
\newtheorem{example}[theorem]{Example}
\newtheorem*{examples*}{Examples}
\newtheoremstyle{named}{}{}{\itshape}{}{\bfseries}{.}{.5em}{\thmnote{#3}}
\theoremstyle{named}
\newtheorem{namedthm}{}
\def\x{\times}
\def\but{\setminus}
\def\emb{\hookrightarrow}
\def\eps{\varepsilon}
\def\phi{\varphi}
\def\emptyset{\varnothing}
\renewcommand{\:}{\colon}
\def\N{\mathbb{N}}
\def\R{\mathbb{R}}
\def\Q{\mathbb{Q}}
\def\Z{\mathbb{Z}}
\def\K{\mathcal{K}}
\def\L{\mathcal{L}}
\def\LM{\mathcal{LM}}
\def\CC{\mathcal{C}}
\def\xr#1{\xrightarrow{#1}}
\newcommand{\xR}[2][]{\ext@arrow 0359\Rightarrowfill@{#1}{#2}}
\newcommand{\xL}[2][]{\ext@arrow 0359\Leftarrowfill@{#1}{#2}}
\DeclareMathOperator{\lk}{lk}
\def\varnabla{\mbox{\scalebox{1.2}{$\triangledown$}\hspace{-8.2pt}\raisebox{2pt}{\scalebox{0.9}{$\triangledown$}}}}
\begin{document}
\title{Topological isotopy and finite type invariants} 
\author{Sergey A. Melikhov}
\address{Steklov Mathematical Institute of Russian Academy of Sciences, 8 Gubkina st., 119991 Moscow, Russia}
\email{melikhov@mi-ras.ru}

\begin{abstract}
In 1974, D. Rolfsen asked: If two PL links in $S^3$ are isotopic (=homotopic through embeddings), then are they PL isotopic?
We prove that they are PL isotopic to another pair of links which are indistinguishable from each other by finite type invariants.
Thus if finite type invariants separate PL links in $S^3$, then Rolfsen's problem has an affirmative solution.
In fact, we show that finite type invariants separate PL links in $S^3$ if and only if Rolfsen's problem has an affirmative solution
and certain 5 other (rather diverse) conjectures hold simultaneously. 

We also show that if $v$ is a finite type invariant (or more generally a colored finite type invariant) of PL links, and $v$ 
is invariant under PL isotopy, then $v$ assumes the same value on all sufficiently close $C^0$-approximations of 
any given topological link; moreover, the extension of $v$ by continuity to topological links is an invariant of isotopy.
Some specific invariants of this kind are discussed.
\end{abstract}

\maketitle

\section{Introduction} \label{intro}

\subsection{PL isotopy versus topological isotopy} D. Rolfsen posed the following problem in 1974 \cite{Ro1}.
It can be argued that the same problem is also implicit in J. Milnor's 1957 paper, where he studied in detail 
some conditions on isotopies which hold trivially for PL isotopies \cite{Mi2}*{Remark 2 and Theorem 10}.

\begin{namedthm}[Rolfsen's Problem] \label{Rolfsen's} ``If $L_0$ and $L_1$ are PL links%
\footnote{It is not clear whether the links are meant to be in $S^3$ (as is implicitly assumed in the assertion
``all PL knots are isotopic to one another'' mentioned in the previous problem in the same list of problems) 
or in an arbitrary $3$-manifold.
For the sake of definiteness we will assume the former interpretation.}
connected by a topological isotopy, are they PL isotopic?''
\end{namedthm}

It has been long known that some PL knots are topologically slice but not PL slice (see \cite{Kir}*{comments on Problems 1.36--1.38})
and some PL links are connected by a topological $I$-equivalence (=``non-locally-flat concordance''), but not by a PL $I$-equivalence 
\cite{Gi2}, \cite{Ro3}*{Example 7}+\cite{Gi1} (see also \cite{Da}*{\S12}, \cite{Hi}*{Theorem 1.9}).
In fact, there exists a PL link which is topologically slice, but does not bound PL disks disjointly embedded in the $4$-ball (see \cite{Fre}, 
\cite{Kir}*{comments on problems 1.38--1.39} and \cite{Ro3}*{Theorem 3}).

More recently, it was shown by the author that not every topological link in $S^3$ is isotopic to a PL link \cite{M21}.
In the present paper we prove

\begin{mainthm} \label{rolfsen}
If $L_0$ and $L_1$ are PL links in $S^3$ connected by a topological isotopy, then they are PL isotopic to links
$L_0'$ and $L_1'$ which are not separated from each other by finite type invariants.
\end{mainthm}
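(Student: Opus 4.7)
The plan is to exploit a PL approximation of the topological isotopy $H\colon L\times I\to S^3\times I$ and absorb the obstruction to genuine PL isotopy into local modifications realized by Habiro-type $C_n$-moves, which are invisible to finite type invariants of degree less than $n$. Since Theorem~\ref{rolfsen} postulates only that $L_0'$ and $L_1'$ are not separated by finite type invariants, it suffices to produce, for each $n$, PL isotopies $L_0\rightsquigarrow L_0^{(n)}$ and $L_1\rightsquigarrow L_1^{(n)}$ such that $L_0^{(n)}$ and $L_1^{(n)}$ are $C_n$-equivalent, and then to combine these by a nesting argument that preserves lower-order agreement when raising $n$.

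First I would take a level-preserving generic PL approximation $\widetilde H$ of $H$ agreeing with $H$ on $L\times\{0,1\}$. By general position applied to PL maps of $1\x I$ into $S^3\x I$, the approximation $\widetilde H$ is an isotopy except at finitely many singular times $0<t_1<\dots<t_k<1$, at each of which $\widetilde H_t$ acquires a single transverse double point inside a small PL ball $B_i\subset S^3$. So $\widetilde H$ connects $L_0$ to $L_1$ by a PL ambient isotopy punctuated by $k$ crossing changes. Crucially, the genuine (topological) isotopy $H$ certifies that inside each $B_i$ the two PL tangles produced by $\widetilde H$ immediately before and after $t_i$ are topologically isotopic relative to $\partial B_i$, even though they differ by a crossing change in the PL category; this is the local mismatch we must correct.

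Next I would standardize these local discrepancies. Because two PL tangles in a ball differing by a crossing change yet topologically isotopic rel boundary can absorb arbitrary amounts of PL-isotopy ``decoration'' in that ball, I expect to show that for any prescribed $n$ the crossing change at $t_i$ can be replaced, after further PL isotopy of the tangles, by a local $C_n$-move in the sense of Habiro. Summing over $i=1,\dots,k$ produces $L_0^{(n)}$ and $L_1^{(n)}$ in the PL isotopy classes of $L_0$ and $L_1$, related by a sequence of $C_n$-moves, hence indistinguishable by finite type invariants of degree less than $n$. To upgrade this to a single pair $(L_0',L_1')$ not separated by \emph{any} finite type invariant, I would run the construction sequentially on a nested family of shrinking balls $B_i^{(n)}\subset B_i$, disjoint for different $n$: the modification carried out in $B_i^{(n)}$ only alters invariants of order $\geq n$, so agreement on invariants of order $<n$ accumulated at earlier stages is preserved. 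Each individual link remains PL as only finitely many moves are performed within any fixed $B_i$, and the construction stabilizes on invariants of any prescribed order.

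The main obstacle I foresee is the local standardization step: given two PL tangles in a ball that are topologically isotopic rel boundary and differ by one crossing change, realize them up to PL isotopy as differing by a $C_n$-move. The topological isotopy inside the ball is potentially wild (this is exactly the obstruction to Rolfsen's Problem), and the technical heart of the argument is to convert this wildness into a deeply-iterated clasper surgery by a controlled PL isotopy. All the remaining steps---generic PL approximation, the definition of $C_n$-moves, and their annihilation of low-degree finite type invariants---are standard, so this ``topological wildness $\Rightarrow$ deep $C_n$-claspers'' dictionary is both the crux and the source of the conditional nature of the result.
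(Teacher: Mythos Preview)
The central gap is in your first substantive step. You assert that because $H$ is a topological isotopy, inside each ball $B_i$ the two PL tangles immediately before and after $t_i$ are topologically isotopic rel $\partial B_i$. This does not follow: $H$ is a global isotopy of the entire link in $S^3$, and nothing forces the portion of $H_t$ for $t$ near $t_i$ to stay inside the small ball $B_i$ or to fix its boundary. What the $\epsilon$-closeness of $\widetilde H$ to $H$ actually buys is far more delicate and is precisely what the paper isolates as \emph{$n$-quasi-isotopy} (\S\ref{quasi}, Theorem~\ref{isotopy}): each singular level $\widetilde H_{t_i}$ has its double point sitting inside a nested sequence $P_0\subset P_1\subset\cdots\subset P_n$ of polyhedra with null-homotopic inclusions and controlled preimages. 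This nested structure---not a local rel-boundary topological isotopy---is the usable residue of the topological isotopy, and your ``local standardization'' step has no input to work with once the false local claim is removed.

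The paper's route is accordingly quite different from yours. It runs: (i) PL approximation of $H$ is an $n$-quasi-isotopy for every $n$ (Theorem~\ref{isotopy}); (ii) any finite type invariant that is already PL-isotopy invariant is automatically $n$-quasi-isotopy invariant (Theorem~\ref{2.2}); (iii) to handle finite type invariants that are \emph{not} PL-isotopy invariant, pass to string links (Proposition~\ref{string-quasi}) and use clasper theory to show that type $n$ invariants of string links are dominated by locally additive ones (Theorem~\ref{locally-additive2}), which \emph{are} PL-isotopy invariant; (iv) the single explicit pair $L_0'=L_0\#K_{L_1}$, $L_1'=L_1\#K_{L_0}$ then does the job (Lemma~\ref{LA-lemma2} and Figure~\ref{implications}). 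In particular no ``nesting over $n$'' is needed: the pair is written down once and for all, and the argument shows it is unseparated at every finite type level. Your proposed nesting in disjoint sub-balls would face the additional difficulty that a $C_n$-move involves $n+1$ strands of the link and cannot in general be confined to a ball meeting the link in a single arc, so disjointness of the modification loci for different $n$ is not automatic.
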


The definition of finite type invariants is recalled in \S\ref{fti}.

Thus Rolfsen's Problem is solved affirmatively provided that the following conjecture holds.
(By ``links'' we will mean ``PL links'' in the present paper.)

\begin{cj}{(L)}\label{habiro} Finite type invariants separate links in $S^3$.
\end{cj}

This is a natural extension of a well-known conjecture of V. Vassiliev:

\begin{cj}{(K)}[Vassiliev \cite{Va0}, \cite{Va}]\label{vassiliev} Finite type invariants separate knots in $S^3$.
\end{cj}

It should be noted that finite type invariants fail to separate knots in the Whitehead manifold, and more generally in 
any contractible open $3$-manifold which is embeddable in $\R^3$ but not homeomorphic to $\R^3$ \cite{Ei}, \cite{Ei'}.

In the case of knots and links in $S^3$, some weight to Conjectures \ref{vassiliev} and \ref{habiro} is added by the close relation between 
finite type invariants and configuration space integrals (see \cite{Ko}).
Some numeric evidence exists (see e.g.\ \cite{LT}) for the AJ Conjecture, which implies \cite{DuG} that the colored Jones polynomial, 
and hence finite type invariants \cite{MeMo}, detect the unknot.
Some numeric evidence exists (see e.g.\ \cite{Zh}, \cite{Wo}) for the Volume Conjecture, which also implies (see \cite{Tan}) that the colored 
Jones polynomial detects the unknot.

On the other hand, it remains unknown, according to \cite{CDM}*{Preface and \S3.2.4}, whether finite type invariants of knots can ever detect
knot orientation (although a certain type 22 invariant of 6-component links \cite{Lin} and a certain type 7 invariant of 2-component 
string links \cite{DK} do detect link orientation), and whether the signature of a knot is dominated by finite type invariants (see, 
however, \cite{Ga} for some numeric evidence for a conjectural domination).
Closely related to Conjectures \ref{vassiliev} and \ref{habiro} is the following problem: Can one extract at least one
finite type invariant from the celebrated categorified invariants, such as Khovanov homology (which is known to detect the unknot)
and the link Floer homology of Ozsv\'ath--Szab\'o, other than the ones already extractable from what they categorify 
(i.e.\ the Jones polynomial and the Conway potential function, respectively, in the two said cases)?
While there is some work in this direction \cite{Au}, \cite{IY}, \cite{Yo}, it appears that the 
extraction problem turned out to be unexpectedly difficult.

\subsection{Extension of invariants}
Our approach to topological isotopy is based on the notion of {\it $n$-quasi-isotopy}, which was introduced by the author 
(see \cite{MR1}, \cite{MR2}) but can be traced back to a number of older constructions, from the Penrose--Whitehead--Zeeman--Irwin 
trick to Casson handles to the Homma--Bryant proof of the Chernavsky--Miller codimension three PL approximation theorem 
(see \cite{MR1}*{Remarks (i)--(iii) in \S1.5}).
We recall the definition in \S\ref{quasi}.

\begin{theorem} \cite{MR1}*{Theorem 1.3} \label{isotopy0}
Let $h_t\:\Theta\to M$ be a topological isotopy of a closed $1$-manifold in a $3$-manifold.
Then for each $n\in\N$ there exists an $\epsilon>0$ such that every PL homotopy $f_t\:\Theta\to M$ which is $\epsilon$-close to $h_t$ and has only 
finitely many double points occurring at distinct time instances is an $n$-quasi-isotopy.
\end{theorem}

Since the proof is only sketched in \cite{MR1}, we include a more detailed proof in \S\ref{quasi}.

Our next result involves {\it colored finite type invariants} of links, introduced by Kirk and Livingston \cite{KL}
(though they did not call them ``colored'').
An invariant $v$ of links is said to be of {\it colored type $n$} if its standard extension to singular links (see \S\ref{fti})
vanishes on those singular links with $n+1$ double points whose all double points are self-intersections of components (rather 
than intersections of distinct components).%
\footnote{More generally, for a colored link one would consider all double points that are intersections of components of the same color,
but we do not need this more general setting in the present paper.}
Colored type $n$ invariants agree on any two links related by self $C_n$-equivalence (see Remark \ref{self-colored}).
One reason for our interest in colored finite type invariants is the following

\begin{namedthm}[Kirk--Livingston Conjecture] \cite{KL}*{p.\ 1333} \label{kl-conj} There exists an infinite family of linearly 
independent $\Z$-valued colored type $2$ invariants of two-component links with $\lk=0$.
\end{namedthm}

The following is the core result of the present paper.

\begin{mainthm}\label{main1}
Let $v$ be a type $n$ invariant, or more generally a colored type $n$ invariant of links in an oriented $3$-manifold.
If $v$ is invariant under PL isotopy, then it is invariant under $n$-quasi-isotopy.
\end{mainthm}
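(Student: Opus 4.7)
The plan is to reduce the theorem, via genericity of PL homotopies, to verifying that $v$ agrees on the two PL resolutions $L_+$ and $L_-$ of a single generic transverse self-intersection event appearing in a given $n$-quasi-isotopy between links $L$ and $L'$; by the standard Vassiliev skein relation $v(L_\times)=v(L_+)-v(L_-)$, this amounts to proving $v(L_\times)=0$ on each singular link $L_\times$ that occurs during the quasi-isotopy.

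First, I would apply general position (and arguments in the spirit of Theorem \ref{isotopy0}) to arrange the $n$-quasi-isotopy as a concatenation of finitely many PL ambient isotopies interspersed with isolated transverse crossing events at distinct time instances. Since $v$ is PL-isotopy invariant, the difference $v(L)-v(L')$ telescopes into a signed sum $\sum_i \pm v(L_\times^{(i)})$ over these events. In an $n$-quasi-isotopy, distinct components stay disjoint throughout, so every $L_\times^{(i)}$ is a singular link all of whose double points are self-intersections of a single component, putting us in the exact regime addressed by the colored type $n$ hypothesis (the ordinary type $n$ case being subsumed).

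Second, I would exploit the nested structure built into the definition of $n$-quasi-isotopy at each event: near the double point $x$ one has a sequence of nested singular disks $D_1 \supset \dots \supset D_n$ whose boundaries are small meridians on the arc carrying $x$ and whose interiors meet only the component containing $x$. Performing finger moves along $D_1$ and successively into deeper $D_k$, and applying the skein relation at each step, I would expand $v(L_\times^{(i)})$ into a signed sum of two kinds of terms: singular links with $n+1$ self-intersections of one component (on which $v$ vanishes by the colored type $n$ condition), and embedded links that pair up into PL-isotopic pairs (on which $v$ matches by the PL-isotopy invariance hypothesis and the terms cancel). The desired vanishing $v(L_\times^{(i)})=0$ follows.

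The main obstacle is bookkeeping: each finger-move step must be verified to introduce exactly one additional self-intersection of the \emph{same} component, without creating any intersection between distinct components or any unaccounted intersection with the disks $D_k$ that would escape the inductive control. This is where the self-intersection clause in the definition of $n$-quasi-isotopy (and not merely an intersection clause that would suffice for ordinary type $n$) enters at every level of the induction, and where the recursive, nested nature of $n$-quasi-isotopy proves indispensable in keeping all newly created singularities confined to one component through all $n$ stages.
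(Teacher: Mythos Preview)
Your high-level strategy is sound and matches the paper: reduce to showing $v^\x(L_\x)=0$ for each singular level of the quasi-isotopy, then use the nested data to build an inductive hierarchy of homotopies, with the colored type~$n$ hypothesis killing the terms that acquire $n+1$ self-intersections and PL isotopy invariance killing the terminal terms.

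However, your description of the nested structure is not the definition of $n$-quasi-isotopy, and this is a genuine gap. An $n$-quasi-embedding does not come with ``nested singular disks $D_1\supset\dots\supset D_n$ whose boundaries are small meridians''; it comes with \emph{increasing} compact polyhedra $P_0=\{x\}\subset P_1\subset\dots\subset P_n$ and arcs $J_0\subset\dots\subset J_n$ such that $f^{-1}(P_i)\subset J_i$ and each inclusion $P_i\cup f(J_i)\hookrightarrow P_{i+1}$ is null-homotopic. There are no disks and no meridians in the data, so ``finger moves along $D_1$ and successively into deeper $D_k$'' has no meaning here. What the paper actually does at stage~$k$ is use the null-homotopy of $P_{k-1}\cup f(J_{k-1})$ inside $P_k$ to drag the image of $J_{k-1}$ (which carries all the current double points) by a homotopy in $P_k$ into a small ball $P_0^+$; this homotopy keeps the $k$ existing double points rigid and acquires finitely many additional ones at isolated times, each a self-intersection of the same component because the entire motion takes place inside $P_k$ and $f^{-1}(P_k)\subset J_k$.

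You are also missing the precise mechanism by which PL isotopy invariance enters. It is not that ``embedded links pair up into PL-isotopic pairs'' in some combinatorial expansion; rather, the terminal singular links $f_{S;\,1}$ of each stage have \emph{all} their double points inside the ball $P_0^+$, and $f_{S;\,1}^{-1}(P_0^+)$ is an arc. The paper isolates this as Lemma~\ref{2.3}: for any singular tangle with this property, the two resolutions at any one of its double points differ only by a local knot and are therefore PL isotopic, so $u^\x$ vanishes by induction on the number of double points. This lemma is the generalized one-term relation that replaces the ordinary framing-independence relation once there is more than one double point, and it is what makes the induction close. Without it your sketch does not terminate.
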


Theorems \ref{isotopy0} and \ref{main1} imply

\begin{corollary} \label{main1''}
Let $v$ be a finite type invariant, or more generally a colored finite type invariant of links in an oriented $3$-manifold.
Suppose that $v$ is invariant under PL isotopy.

Then $v$ assumes the same value on all sufficiently close $C^0$-approximations of any given topological link. 
Moreover,%
\footnote{This ``Moreover'' can be replaced by ``Consequently'', that is, the second assertion of Corollary \ref{main1''}
can in fact be deduced directly from the first one (using that $[0,1]$ is compact).}
the extension of $v$ by continuity to topological links is an invariant of isotopy.
\end{corollary}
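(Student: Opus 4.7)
The plan is a routine assembly of Theorem \ref{isotopy0} and Theorem \ref{main1}. Since a (colored) finite type invariant $v$ has some specific (colored) type $n$, I fix this $n$ throughout.

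For the first assertion, fix a topological link $L$ and apply Theorem \ref{isotopy0} to the \emph{constant} topological isotopy $h_t \equiv L$ (with this $n$) to obtain some $\epsilon > 0$. I claim that if $L_0'$ and $L_1'$ are PL links whose $C^0$-distance from $L$ is less than $\epsilon/2$, then $v(L_0') = v(L_1')$. To see this, I construct a PL homotopy between $L_0'$ and $L_1'$ that stays within the $\epsilon$-neighborhood of $L$—for instance by PL-approximating the linear interpolation in local coordinate charts—and then perturb it by PL general position so that its only singularities are finitely many double points occurring at distinct time instants. Theorem \ref{isotopy0} then upgrades this homotopy to an $n$-quasi-isotopy, and Theorem \ref{main1} yields the claim. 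Hence the extension $\tilde v$ of $v$ to topological links by $C^0$-continuity is well-defined.

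For the second assertion, I follow the footnote and deduce it from the first via local constancy. If two topological links $L$ and $L''$ are $C^0$-close enough, any sufficiently close PL approximation of $L$ is also a sufficiently close PL approximation of $L''$, so the first assertion forces $\tilde v(L)=\tilde v(L'')$. Hence $\tilde v$ is locally constant on the space of topological links in the $C^0$-topology. A topological isotopy from $L_0$ to $L_1$ defines a continuous path between them in this space, and a locally constant function on a connected set is constant; therefore $\tilde v(L_0)=\tilde v(L_1)$.

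I do not foresee a serious obstacle, since the two key theorems are already in place. The only technical step that is not a direct citation is the general position argument producing a small PL homotopy with finitely many double points spread out in time, but this is standard for PL maps of a closed $1$-manifold into an oriented $3$-manifold, so no real difficulty should remain.
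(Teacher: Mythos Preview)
Your proposal is correct and matches the paper's approach: the paper simply states that Theorems \ref{isotopy0} and \ref{main1} imply the corollary, and you have filled in precisely those details. For the second assertion you chose the ``Consequently'' route via local constancy that the paper's footnote explicitly endorses, rather than applying Theorem \ref{isotopy0} directly to the given isotopy; both arguments are valid and the paper acknowledges both.
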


\begin{example} \label{main-example} Let $L$ be a link in $S^3$ with components $K_1,\dots,K_m$.
The hypothesis of Corollary ~\ref{main1''} is satisfied for each coefficient of each of the following 3 formal power series:
\smallskip

\begin{itemize}\setlength\itemsep{5pt}
\item $\bar\nabla_L(z):=\dfrac{\nabla_L(z)}{\nabla_{K_1}(z)\cdots\nabla_{K_m}(z)}$, where $\nabla_L$ is the Conway polynomial 
(see \S\ref{conway} for the details).
\item $\bar\varnabla_L(z_1,\dots,z_m):=\dfrac{\varnabla_L(z_1,\dots,z_m)}{\nabla_{K_1}(z_1)\cdots\nabla_{K_m}(z_m)}$,
where $\varnabla_L$ is the multi-variable Conway polynomial of \cite{M24-2}, which contains the same information as the Conway potential function 
(see \cite{M24-2} for the details).
The coefficients of $\varnabla_L$ and $\bar\varnabla_L$ are not finite type invariants, but they are colored finite type invariants.
\item $\bar Z(L):=\dfrac{Z(L)}{Z(K_1)\cdots Z(K_m)}$, where $Z(L)$ is the Kontsevich integral (see Remark \ref{kontsevich})
and the meaning of the fraction is explained in the proof of Theorem \ref{rational}.
\end{itemize}
\smallskip

Corollary \ref{main1''} implies that $\bar\nabla_L$, $\bar\varnabla_L$ and $\bar Z(L)$ extend by continuity
to invariants of isotopy of topological links.
In the case of $\bar\nabla_L$ and $\bar\varnabla_L$ more can be said: since $\nabla_L$ and $\varnabla_L$ are genuine polynomials,
$\bar\nabla_L$ and $\bar\varnabla_L$ are rational power series for every PL link $L$.
Their extensions to topological links need not be rational (because exactly how close is ``sufficiently close'' $C^0$-approximation 
is determined individually for each coefficient), but whenever one of them is not rational for some topological link $\L$, 
we immediately know that $\L$ is not isotopic to any PL link.
 
The latter observation is applied in a subsequent paper by the author \cite{M24-3} to obtain some progress on another 50-year old
problem of Rolfsen \cite{Ro1}: {\it Is every topological knot isotopic to a PL knot? In particular, is the Bing sling isotopic to a PL knot?}
For example, we show in \cite{M24-3}, using $\bar\nabla_L(z)$ (extended to topological links by means of Corollary \ref{main1''}),
that the Bing sling is not isotopic to any PL knot through topological knots that are intersection of solid tori.
Also we show in \cite{M24-3}, using $\bar\varnabla_L(z_1,z_2)$ (extended to topological links by means of Corollary \ref{main1''}),
that the Bing sling is not isotopic to any PL knot by an isotopy which extends to an isotopy, or just a link homotopy of
two-component links with linking number $1$.

Although the results of \cite{M24-3} go much further than this, Rolfsen's Bing sling problem remains open.
In fact the results of \cite{M24-3} seem to suggest that the Conway potential function is simply not powerful enough to suffice for its solution.
\end{example}

\begin{remark}
It was noted already by Traldi \cite{Tr2}*{Theorem 1 and \S5} and Rolfsen \cite{Ro4} that for any link polynomial $f_L$ which is multiplicative 
under addition of a local knot, the rational function $f_L/(f_{K_1}\cdots f_{K_m})$ is invariant under PL isotopy.
Thus similarly to Example \ref{main-example} one can deal with appropriate power series expansions of the two-variable HOMFLY and Kauffman 
polynomials whose coefficients are finite type invariants (see \cite{BN}*{Theorem ~3}, \cite{Lie}, \cite{HG}).
But it seems that the currently most promising source of invariants which one could try to feed into Corollary \ref{main1''} is 
the Akutsu--Deguchi--Ohtsuki polynomials, which generalize the multivariable Alexander polynomial and in the case of knots 
contain the same information as the colored Jones polynomial \cite{Wil}*{p.\ 24}, \cite{Wil2}.
\end{remark}

\begin{remark} 
Another interesting question which one could try to solve by applying Corollary \ref{main1''} is the Isotopic Realization Problem in $S^3$ 
\cite{MR1}*{Problem 1.1} (see also \cite{M02}*{Questions I and III}):
{\it Is it true that for every continuous map $f\:\Theta\to S^3$, where $\Theta$ is a compact $1$-manifold, there exists a homotopy 
$h_t\:\Theta\to S^3$ such that $h_1=f$ and $h_t$ is injective for $t<1$?}
Potential counterexamples include the infinite connected sum of Whitehead string links \cite{MR1}*{Figure 1} and
a pair of ``linked'' wild arcs \cite{M02}*{Figure 1}.%
\footnote{Beware that the argument in \cite{M02}*{Example 1.3} is erroneous, as explained in \cite{MR1}*{end of \S1.2}.}
As pointed out in \cite{MR1}, to solve the problem negatively it suffices to find an invariant of $1$-quasi-isotopy, or of $k$-quasi-isotopy 
for some fixed $k$, which ``detects accumulation of complexity'' in the sense explained in \cite{MR1}*{Problem 1.5}.
In terms of invariants with values in an abelian group, ``detecting accumulation of complexity'' translates roughly into an infinite series of 
linearly independent invariants (for instance, the infinite family of coefficients of Conway polynomials of knots yields a lower bound on knot genus),
so in view of Theorem \ref{main1}, the desired invariant almost certainly exists if the Kirk--Livingston Conjecture is true.
But there might be other approaches as well, as the gap between $k$-quasi-isotopy and colored type $k$ invariants, well-defined up to PL isotopy,
appears to be very wide (see Examples \ref{weak} and \ref{milnor-double}).
\end{remark}

\begin{example} \label{conway-ex}
Let us discuss in more detail the formal power series $\bar\nabla_L$ of Example \ref{main-example}.
It is well-known (see \S\ref{conway}) that for an $m$-component PL link $L$ its Conway polynomial is of the form 
\[\nabla_L(z)=z^{m-1}(c_0+c_1z^2+c_2z^4+\dots+c_nz^{2n}).\]
Hence the power series $\bar\nabla_L=\nabla_L/(\nabla_{K_1}\cdots\nabla_{K_m})$ is of the form \[\bar\nabla_L(z)=z^{m-1}(\alpha_0+\alpha_1z^2+\alpha_2z^4+\dots).\]
It is not hard to see that $\alpha_k(L)$ is of type $m-1+2k$ and of colored type $2k$. 
With more work one can show that $\alpha_k(L)$ is of colored type $2k-1$ for $k>0$ (Proposition \ref{reduced-coefficients}) 
but $\alpha_2(L)$ is not of colored type $2$ for two-component links of linking number $0$ (Proposition \ref{ctype2}).
On the other hard, we show that $\alpha_k(L)$ is invariant under $k$-quasi-isotopy (Theorem \ref{conway-quasi}). 
Thus the converse to Theorem \ref{main1} is not true.

If $L$ is a $2$-component link, then $\alpha_0(L)=c_0(L)$ is its linking number and 
$\alpha_1(L)=c_1(L)-c_0(L)\big(c_1(K_1)+c_1(K_2)\big)$ is its generalized Sato--Levine invariant, which for any fixed value of $\lk(L)$
generates the group of colored type $1$ invariants modulo colored type $0$ invariants \cite{KL}.
The generalized Sato--Levine invariant emerged independently in the work of Traldi \cite{Tr2}*{\S10}, Polyak--Viro (see \cite{AMR}),
Kirk--Livingston \cite{KL} (see also \cite{Liv}), Akhmetiev (see \cite{AR}) and Nakanishi--Ohyama \cite{NO}.
It is proved in \cite{NO} (see also \cite{M18} for an alternative proof) that $\alpha_0$ and $\alpha_1$ constitute a complete set of 
invariants of self $C_2$-equivalence; and it is shown in \cite{M18}*{Corollary 5.2} that self $C_2$-equivalence is the same thing as 
$\frac12$-quasi-isotopy.

Let us note that the invariance $\alpha_1(L)$ under $1$-quasi-isotopy along with Theorem \ref{isotopy0} already suffice to prove that 
there exists a PL knot in the solid torus $S^1\x D^2$ which is homotopic, but not topologically isotopic to the core circle 
\cite{MR1}*{proof of Theorem 2.2}.
By contrast, every PL knot in the solid torus which is homotopic to the core circle is topologically $I$-equivalent to the core circle, 
i.e.\ they cobound an embedded annulus in $S^1\x D^2\x I$ \cite{Gi1} (see also \cite{Da}*{\S12}, \cite{Hi}*{Theorem 1.9}); this is also 
true of topological knots in the solid torus \cite{Gi2} (see also \cite{M24-3}*{Appendix \ref{part3:app}},
\cite{An0}). 

When $L$ is a $3$-component link, $\alpha_1(L)$ is also of some interest.
Upon adding to $\alpha_1(L)$ a correction term, which is a function of $\alpha_0(\Lambda)$ and $\alpha_1(\Lambda)$ for $2$-component
sublinks $\Lambda$ of $L$, we obtain an invariant $\gamma(L)$ which has a rather neat crossing change formula for self-intersections
of a component (Proposition \ref{gamma}).
\end{example}

\subsection{Reduction of conjectures}
In order to state some further consequences of Theorem \ref{main1} we introduce the following

\begin{cj}{(L/K)} \label{links-modulo-knots} PL isotopy classes of links in $S^3$ are separated by those finite type invariants 
that are well-defined up to PL isotopy.
\end{cj}

Since two links are PL isotopic if and only if they are equivalent under the equivalence relation generated by ambient isotopy 
and insertion of local knots (see \cite{Ro2}*{Theorem 4.2}), this can be seen as the ``links modulo knots'' version of 
the Vassiliev Conjecture (Conjecture \ref{vassiliev}).

Is \ref{habiro} equivalent to the conjunction of \ref{vassiliev} and \ref{links-modulo-knots}?
This is not clear. 
But we prove

\begin{mainthm} \label{string+rational}
\ref{habiro} is equivalent to the conjunction of \ref{vassiliev} and \ref{links-modulo-knots}:

(a) if only rational finite type invariants are considered;

(b) if string links are considered instead of links in $S^3$.
\end{mainthm}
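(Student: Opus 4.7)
My plan is to establish both directions of the equivalence.

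For the easier direction $\ref{vassiliev} \wedge \ref{links-modulo-knots} \Rightarrow \ref{habiro}$ (which should in fact hold in general, not only in cases (a) and (b)): take non-isotopic ordered links $L_0, L_1$. If they are not PL isotopic, \ref{links-modulo-knots} directly provides a PL-isotopy-invariant finite type invariant distinguishing them. Otherwise $L_0$ and $L_1$ are PL isotopic but not isotopic, and I claim that then their ordered tuples of component knot types must differ. This claim follows by induction on the number of local-knot insertions/removals in a PL isotopy from $L_0$ to $L_1$, using Schubert's uniqueness of prime decomposition under connected sum to pair cancelling insertions and removals and eliminate them via ambient isotopy. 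Granting the claim, some $K_i(L_0) \neq K_i(L_1)$, and \ref{vassiliev} supplies a knot invariant $v$ of type $n$ separating them; the composite $L \mapsto v(K_i(L))$ is a link invariant of type $n$, since in the defining alternating sum any double point involving the $i$-th component and a distinct component forces cancellation via the involution $S \mapsto S \triangle \{d\}$.

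The harder direction $\ref{habiro} \Rightarrow \ref{vassiliev} \wedge \ref{links-modulo-knots}$ is where the restrictions to (a) or (b) enter. The implication $\ref{habiro} \Rightarrow \ref{vassiliev}$ is immediate, since knots are one-component links. For $\ref{habiro} \Rightarrow \ref{links-modulo-knots}$, given $L_0, L_1$ in different PL isotopy classes, I first use local-knot removal within each PL isotopy class to produce PL-isotopic representatives $L_0', L_1'$ with all components unknotted (respectively, with all strand closures trivial in the string link case); by \ref{habiro} some finite type invariant $v$ separates $L_0'$ from $L_1'$. The remaining task is to upgrade $v$ to a PL-isotopy-invariant finite type invariant that still separates the original pair $L_0, L_1$.

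In case (a), the Kontsevich integral $Z$ is universal for rational finite type invariants, so $Z(L_0') \neq Z(L_1')$. By Example \ref{main-example} each coefficient of $\bar Z(L) := Z(L)/\bigl(Z(K_1) \cdots Z(K_m)\bigr)$ is a PL-isotopy-invariant rational finite type invariant; since all components of $L_j'$ are unknotted, $\bar Z(L_j) = \bar Z(L_j') = Z(L_j')/Z(O)^m$, and some coefficient of $\bar Z$ distinguishes $L_0$ from $L_1$. In case (b), the strand-closure map $c\:\mathcal{SL}_m \to \K^m$ on $m$-strand string links is a monoid homomorphism with a section given by the diagonal inclusion, and, using the cancellativity of the stacking monoid, every string link factors canonically as $\sigma = \sigma_{\mathrm{can}}(\pi(\sigma)) \cdot c(\sigma)$, where $\sigma_{\mathrm{can}}(\pi)$ is the unique representative of the PL isotopy class $\pi$ with trivial strand closures. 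Setting $\bar v(\sigma) := v(\sigma_{\mathrm{can}}(\pi(\sigma)))$ yields a PL-isotopy-invariant function that separates $L_0$ from $L_1$. The main obstacle is to verify that $\bar v$ is itself of finite type: for this I plan to exploit that the section pullback $\iota^* v$ on $\K^m$ is a finite sum of tensor products of knot finite type invariants, and iteratively subtract off its pullback along $c$ to cancel the strand-closure dependence of $v$ degree by degree, without raising the finite type degree.
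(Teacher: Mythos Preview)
Your treatment of the direction $\ref{vassiliev}\wedge\ref{links-modulo-knots}\Rightarrow\ref{habiro}$ is essentially correct and matches the paper: both reduce to the fact that PL isotopic links with the same component knot types are ambient isotopic (Theorem~\ref{rolfsen-lemma0} for links in $S^3$, Corollary~\ref{rolfsen-lemma} for string links). Your inductive sketch of this fact via Schubert is plausible for links in $S^3$; for string links the analogue is genuinely harder and is the content of Theorem~\ref{global}(b).

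The direction $\ref{habiro}\Rightarrow\ref{links-modulo-knots}$ contains a gap common to both (a) and (b): you assert that within each PL isotopy class one can find a representative with all components (resp.\ strand closures) unknotted. This is false. PL isotopy can only add or remove \emph{local} knots, and a component can be knotted with none of its prime factors local. For a concrete counterexample, the torus link $T(4,6)$ is a prime $2$-component link each of whose components is a trefoil; hence $\Gamma(T(4,6))=T(4,6)$ has knotted components and no PL isotopy can unknot them. In the paper's language, $K_{\Gamma(L)}$ need not be trivial.

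For case~(a) this gap is easily repaired, and the repair is exactly the paper's argument (Theorem~\ref{rational}): instead of trying to make the component knot types trivial, make them \emph{equal} by setting $L_0':=L_0\#K_{L_1}$ and $L_1':=L_1\#K_{L_0}$. Then $K_{L_0'}=K_{L_1'}$, so the denominators in $\bar Z(L_0')$ and $\bar Z(L_1')$ coincide, and your argument via $\bar Z$ goes through verbatim.

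For case~(b) the gap is more serious. Your factorization $\sigma=\sigma_{\mathrm{can}}(\pi(\sigma))\cdot c(\sigma)$ simply does not exist in general, so $\bar v(\sigma):=v(\sigma_{\mathrm{can}}(\pi(\sigma)))$ is undefined. Even after replacing your $L_0',L_1'$ by $L_0\#K_{L_1}$, $L_1\#K_{L_0}$ as above, the remaining task---to upgrade an arbitrary type~$n$ invariant $v$ separating $L_0',L_1'$ to a PL-isotopy-invariant finite type invariant---is precisely the hard content, and your ``iterative subtraction'' does not supply it. Note that the naive candidate $\bar v(\sigma):=v(\sigma)-v(K_\sigma)$ is \emph{not} PL-isotopy invariant unless $v$ is already locally additive: one would need $v(\sigma\#J)-v(\sigma)=v(K_\sigma\#J)-v(K_\sigma)$ for every totally split $J$, which fails for general $v$. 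The paper's route (Theorem~\ref{string}) is to invoke Theorem~\ref{locally-additive2}, whose proof uses Habiro's clasper calculus, the finitely generated nilpotent group structure on $C_n$-equivalence classes of string links, and a dimension-subgroup argument (Lemma~\ref{dim-subgroup}), to replace $v$ by a \emph{locally additive} type~$r$ invariant $w$ still separating $L_0',L_1'$. For such $w$ the invariant $\bar w(\sigma):=w(\sigma)-w(K_\sigma)$ \emph{is} PL-isotopy invariant, and since $K_{L_0'}=K_{L_1'}$ it still separates $L_0'$ from $L_1'$, hence $L_0$ from $L_1$. Your sketch does not engage with this step, and I do not see how to complete it along the lines you propose without essentially reproving Theorem~\ref{locally-additive2}.
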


For a more accurate formulation see Theorems \ref{string} and \ref{rational}.
Part (a) is an easy consequence of the (not so easy) theory of the Kontsevich integral; part (b) is proved by using the clasper theory 
of Gusarov and Habiro.

Now we resume the discussion of consequences of Theorem \ref{main1}.

\begin{corollary} If $W$ is any contractible open $3$-manifold other than $\R^3$, then the version of
Conjecture \ref{links-modulo-knots} for links in $W$ fails.
\end{corollary}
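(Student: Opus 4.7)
The plan is to combine Corollary \ref{main1''} with the construction of two PL links in $W$ that are topologically isotopic but not PL isotopic.

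First I would observe that Theorems \ref{isotopy0} and \ref{main1}, and therefore Corollary \ref{main1''}, go through verbatim with $W$ in place of $S^3$, since their arguments are local to an oriented 3-manifold. Consequently every finite type invariant of links in $W$ that is invariant under PL isotopy is automatically invariant under topological isotopy in $W$; in particular, any two PL links in $W$ which are topologically isotopic agree on every such invariant. To refute the analogue of Conjecture \ref{links-modulo-knots} for $W$ it therefore suffices to exhibit a single pair of PL links $L_0, L_1 \subset W$ that are topologically isotopic but not PL isotopic in $W$.

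To exhibit such a pair I would exploit the end structure of $W$. Since $W \not\cong \R^3$ is a contractible open 3-manifold, by the McMillan--Husch--Price characterization of $\R^3$ (using the 3-dimensional Poincar\'e conjecture) the space $W$ fails to be simply connected at infinity. One may therefore pick a proper exhaustion $W = \bigcup_{i \ge 1} T_i$ by compact PL 3-submanifolds in which each inclusion $T_i \hookrightarrow T_{i+1}$ is null-homotopic but does not factor through a PL ball. Let $L_0 \subset T_1$ be a PL link (for instance, a pair of parallels of the core of $T_1$) that realizes this Whitehead-type clasping between $T_1$ and $T_2$, and let $L_1$ be a PL link of the same local topological type contained in a small PL ball in $W$. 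A Bing-style topological shrinking argument, pushing a topologically embedded spanning surface outward through the tower via the successive null-homotopies, then produces a topological isotopy $L_0 \simeq L_1$ in $W$.

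The main obstacle will be to verify that $L_0$ and $L_1$ are not PL isotopic in $W$. By Rolfsen's \cite{Ro2}*{Theorem 4.2}, PL isotopy is generated by PL ambient isotopy and insertion of local knots, so the distinguishing invariant must be untouched by local knot insertion (local knots are confined to PL balls). A natural candidate is a linking pairing of the link class against loops in the end of $W$, detected by the non-trivial fundamental pro-group at infinity; it is preserved under compactly supported PL ambient isotopies and is unaffected by local knots, which can be placed disjointly from any chosen end-loop representative. Since this pairing is non-trivial on $L_0$ by construction but vanishes on $L_1$ (which lies in a ball), this furnishes the desired separation, and completes the refutation of the conjecture's analogue in $W$.
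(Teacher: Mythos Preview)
Your high-level strategy is sound: exhibit two PL links in $W$ that are not PL isotopic yet cannot be separated by any finite type invariant well-defined up to PL isotopy. But the execution has real gaps, and the paper's route is both simpler and avoids them.

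The paper does not go through topological isotopy at all. It cites \cite{MR2}*{Proposition 2.3}, which shows that any contractible open $3$-manifold $W\ne\R^3$ contains a knot $K$ that is $n$-quasi-isotopic to the unknot $U$ for every finite $n$ (indeed for $n=\omega$) yet is not PL isotopic to $U$. Theorem \ref{main1} then immediately says $K$ and $U$ are not separated by any finite type invariant that is PL isotopy invariant. That is the whole argument: one citation plus one application of Theorem \ref{main1}.

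Your construction, by contrast, leaves several steps unsubstantiated. First, the description of $L_0$ as ``a pair of parallels of the core of $T_1$'' presupposes $T_1$ is a solid torus, which need not hold for an arbitrary exhaustion of an arbitrary contractible $W$; you would have to set this up carefully. Second, the ``Bing-style topological shrinking argument'' producing a topological isotopy $L_0\simeq L_1$ is asserted, not proved; this is genuinely non-trivial and is exactly the sort of statement that \cite{MR2}*{Proposition 2.3} is there to supply (and that result gives $n$-quasi-isotopy, which suffices for Theorem \ref{main1} and is easier than topological isotopy). Third, and most seriously, your proposed obstruction to PL isotopy is not well-defined: a ``linking pairing against loops in the end'' is not obviously invariant under PL isotopy, since PL isotopies in an open manifold need not be compactly supported ambient isotopies, and Rolfsen's characterization \cite{Ro2}*{Theorem 4.2} is stated for $S^3$, not for arbitrary open $3$-manifolds. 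Without a precise invariant, the claim that $L_0$ and $L_1$ are not PL isotopic is unproven.

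In short: replace your second and third paragraphs by a direct appeal to \cite{MR2}*{Proposition 2.3} and Theorem \ref{main1}; the detour through topological isotopy and an ad hoc end-theoretic invariant is unnecessary and, as written, incomplete.
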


\begin{proof} $W$ is known to contain a knot $K$ which is not PL isotopic to the unknot $U$, but $n$-quasi-isotopic to $U$ for all finite $n$ 
(and in fact even for $n=\omega$) \cite{MR2}*{Proposition 2.3}.
Then by Theorem \ref{main1}, $K$ is not separated from $U$ by finite type invariants, well-defined up to PL isotopy.
\end{proof}

\begin{corollary} \label{main1'} Conjecture \ref{links-modulo-knots} is equivalent to the conjunction of the 4 conjectures 
asserting that the following 4 implications can be reversed for any links $L$, $L'$ in $S^3$:
\bigskip

\noindent\fbox{\parbox{\textwidth}{\small\rm
\begin{center}
\smallskip

$L$ and $L'$ are PL isotopic

$\Downarrow$

$L$ and $L'$ are topologically isotopic

$\Downarrow$

$L$ and $L'$ are $n$-quasi-isotopic for all finite $n$

$\Downarrow$

$L$ and $L'$ are not separated by colored finite type invariants, well-defined up to PL isotopy

$\Downarrow$

$L$ and $L'$ are not separated by finite type invariants, well-defined up to PL isotopy
\smallskip

\end{center}
}}
\end{corollary}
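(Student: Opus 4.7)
The plan is to verify that each of the four forward implications in the displayed chain holds unconditionally, and then to deduce the equivalence of Conjecture \ref{links-modulo-knots} with the conjunction of the four reverse implications via a purely logical chain-of-equivalences argument.

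The forward implications mostly follow from results already stated in the paper. The top one, ``PL isotopic implies topologically isotopic'', is immediate from definitions. For the second, ``topologically isotopic implies $n$-quasi-isotopic for all finite $n$'', I would take a topological isotopy between PL links $L_0$ and $L_1$ and approximate it by a PL homotopy $f_t$ with only finitely many transverse double points at distinct times; by Theorem \ref{isotopy0}, for $\epsilon$ small enough such an $f_t$ is an $n$-quasi-isotopy, and short PL isotopies at the endpoints (which exist because sufficiently close PL approximations of a PL link are PL isotopic to it) allow us to arrange $f_0=L_0$ and $f_1=L_1$, yielding an $n$-quasi-isotopy from $L_0$ to $L_1$. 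The third implication is the content of Theorem \ref{main1}: any colored type $n$ invariant that is invariant under PL isotopy is invariant under $n$-quasi-isotopy. The fourth is trivial: every type $n$ invariant is automatically of colored type $n$ since the defining vanishing condition for colored type $n$ is weaker, so any two links indistinguishable by the larger class of colored finite type invariants are automatically indistinguishable by the smaller class of finite type invariants.

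Next I would observe that the composition of all four reverse implications, namely ``not separated by finite type invariants well-defined up to PL isotopy implies PL isotopic'', is precisely the statement of Conjecture \ref{links-modulo-knots}.

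The final step is a purely propositional observation: given any chain of true implications $A_1 \Rightarrow A_2 \Rightarrow A_3 \Rightarrow A_4 \Rightarrow A_5$, the conjunction of the four individual converses $A_{i+1} \Rightarrow A_i$ is equivalent to the single composite converse $A_5 \Rightarrow A_1$. One direction is composition of the four converses; the other recovers each individual converse $A_{i+1} \Rightarrow A_i$ by sandwiching $A_5 \Rightarrow A_1$ between the forward implications $A_{i+1} \Rightarrow \cdots \Rightarrow A_5$ and $A_1 \Rightarrow \cdots \Rightarrow A_i$. I do not anticipate any genuine obstacle here, as all the substantive content is already contained in Theorems \ref{isotopy0} and \ref{main1}; the corollary is essentially a logical reorganization of these results.
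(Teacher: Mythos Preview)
Your proposal is correct and follows essentially the same approach as the paper: verify the four forward implications (trivial, Theorem \ref{isotopy0}, Theorem \ref{main1}, trivial), observe that Conjecture \ref{links-modulo-knots} is the composite converse, and conclude by the elementary propositional fact that for a chain of true implications the conjunction of the individual converses is equivalent to the composite converse. The paper's proof is more terse but structurally identical; your added detail on the endpoint adjustment for the second implication and the explicit propositional argument are fine elaborations rather than departures.
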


\begin{proof} The 4 implications do hold: the first one and the last one are trivial, the second one follows from 
Theorem \ref{isotopy0} and the third one from Theorem \ref{main1}.
Conjecture \ref{links-modulo-knots} is the composite of the converses of the 4 implications.
Hence it is implied by these 4 converses, and at the same time implies that all 5 assertions in the frame are equivalent.
\end{proof}

\begin{corollary} \label{old}
If Conjecture \ref{links-modulo-knots} holds, then topologically isotopic links in $S^3$ are PL isotopic.
\end{corollary}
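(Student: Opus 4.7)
The plan is to derive this as a direct consequence of Theorem \ref{rolfsen} (Theorem A) combined with the hypothesis that Conjecture \ref{links-modulo-knots} holds. The argument requires no new ideas beyond what has already been assembled; it is essentially a diagram-chase through the implications laid out in Corollary \ref{main1'}.

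In more detail, suppose $L_0$ and $L_1$ are PL links in $S^3$ connected by a topological isotopy. By Theorem \ref{rolfsen}, there exist PL links $L_0'$ and $L_1'$ such that $L_i$ is PL isotopic to $L_i'$ for $i=0,1$, and such that $L_0'$ and $L_1'$ are not separated by any finite type invariant. In particular, $L_0'$ and $L_1'$ are not separated by any finite type invariant that happens to be well-defined up to PL isotopy. Applying Conjecture \ref{links-modulo-knots} to $L_0'$ and $L_1'$, we conclude that $L_0'$ and $L_1'$ themselves are PL isotopic. Chaining the three PL isotopies $L_0\sim L_0'\sim L_1'\sim L_1$ gives the desired conclusion.

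Alternatively, one can simply cite Corollary \ref{main1'}: under Conjecture \ref{links-modulo-knots}, all five assertions in the frame are equivalent, and in particular ``topologically isotopic'' collapses to ``PL isotopic''. There is no real obstacle here — the content of the corollary is already absorbed into Theorem \ref{rolfsen} and Corollary \ref{main1'}, and the only thing to be done is to observe that the hypothesis of Conjecture \ref{links-modulo-knots} is exactly what is needed to close the loop. So the proof should consist of one or two sentences of reduction.
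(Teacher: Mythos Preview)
Your proposal is correct, and your second approach (citing Corollary \ref{main1'} directly) is exactly the paper's intended argument: the paper gives no separate proof of Corollary \ref{old}, treating it as an immediate consequence of Corollary \ref{main1'}, since under Conjecture \ref{links-modulo-knots} all five framed assertions become equivalent.

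Your first approach via Theorem \ref{rolfsen} is also valid, but note that in the paper's logical order Corollary \ref{old} precedes the proof of Theorem \ref{rolfsen}, and the paper explicitly remarks that Corollary \ref{old} (together with Theorem \ref{string+rational}) ``do not quite suffice to get Theorem \ref{rolfsen}'' --- so the paper views Corollary \ref{old} as the lighter result, established via Corollary \ref{main1'} without invoking Theorem \ref{rolfsen}. Both routes are short, but the one through Corollary \ref{main1'} is the one the paper has in mind and avoids the forward reference.
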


While Corollary \ref{old} and Theorem \ref{string+rational} do not quite suffice to get Theorem \ref{rolfsen}, 
a minor modification of their proofs does.
On this way we also get a modification of Corollary \ref{main1'}, which we now prepare to state.

If two string links are not separated by type $n$ invariants, then it is easy to see that their closures are also 
not separated by type $n$ invariants (cf.\ Lemma \ref{fti-closure}).
The converse is false, and for a good reason: a nontrivial string link may have trivial closure.%
\footnote{For instance, the connected sum $W\#\rho W$ of the Whitehead string link and its reflection is a non-trivial 
string link, as detected by a type $3$ invariant (see \cite{MR1}*{proof of Theorem 2.2} or \cite{M18}*{Example 3.3}).
On the other hand, the closure of $W\#\rho W$ (which is a connected sum of the Whitehead link with its reflection along bands, 
orthogonal to the mirror) is easily seen to be the unlink.}
But if we fix the link and allow the string link to vary, the converse is still false.
For instance, the Borromean rings are not separated from the unlink by type $2$ invariants \cite{Hab}*{Proposition 7.4(2)}, 
but whenever we represent both as closures of string links, these string links are always separated by type $2$ invariants, 
since it is known that string links that are not separated by type $2$ invariants are $C_3$-equivalent (see \cite{MY}), 
but the Borromean rings are not $C_3$-equivalent to the unlink \cite{Hab}*{Proposition 7.4(1)}.

Moreover, given a sequence $L_1,\dots,L_k$ of links such that $L_1$ is the Borromean rings and $L_k$ is the unlink, 
and a representation for each pair $L_i$, $L_{i+1}$ as closures of string links $\Lambda_{2i-1}$, $\Lambda_{2i}$,
then at least one pair $\Lambda_{2i-1}$, $\Lambda_{2i}$ will be separated by type $2$ invariants, as the same argument with 
$C_3$-equivalence shows.
In this event we say that the Borromean rings and the unlink ``are separated by type $2$ invariants of string links''.
More formally, given a class $C$ of invariants of string links, and given links $L$ and $L'$ in $S^3$, we say that $L$ and $L'$ 
are {\it not separated by invariants of string links of class $C$} if $L$ and $L'$ are equivalent with respect to the equivalence 
relation generated by ambient isotopy and the relation ``to be closures of string links that are not separated by invariants 
of class $C$''.
If two links in $S^3$ are not separated by type $n$ invariants of string links, then by the previous arguments 
(cf.\ Lemma \ref{fti-closure}) they are not separated by type $n$ invariants (of links in $S^3$).

\begin{cj}{(HM)} \label{habegger-meilhan} If two links in $S^3$ are not separated by finite type invariants, then 
for each $n$ they are not separated by type $n$ invariants of string links.
\end{cj}

The question of validity of Conjecture \ref{habegger-meilhan} is presumably included in a problem of N.~ Habegger and J.-B. Meilhan 
\cite{HM}*{Problem 5.4}, which however can be made precise in multiple ways as it is stated in informal language.

\begin{mainthm} \label{main2}
\ref{habiro} is equivalent to the conjunction of \ref{vassiliev}, \ref{habegger-meilhan} and the 4 conjectures asserting that 
the following 4 implications can be reversed for any links $L$, $L'$ in $S^3$:
\bigskip

\noindent\fbox{\parbox{\textwidth}{\small\rm
\begin{center}
\smallskip

$L$ and $L'$ are PL isotopic

$\Downarrow$

$L$ and $L'$ are topologically isotopic

$\Downarrow$

$L$ and $L'$ are $n$-quasi-isotopic for all finite $n$

$\Downarrow$

for each $n$, $L$ and $L'$ are not separated by colored type $n$ invariants of string links, well-defined up to PL isotopy

$\Downarrow$

for each $n$, $L$ and $L'$ are not separated by type $n$ invariants of string links, well-defined up to PL isotopy
\smallskip

\end{center}
}}
\end{mainthm}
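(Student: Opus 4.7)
The plan is to mirror the structure of Corollary \ref{main1'} and Theorem \ref{string+rational}(b), with Conjecture \ref{habegger-meilhan} providing the bridge between link-level and string-link-level finite type invariants. First I verify that the four framed implications do hold: the first and the last are immediate from the definitions (a PL isotopy is in particular a topological isotopy, and every type $n$ invariant of string links is a colored type $n$ invariant of string links); the second is Theorem \ref{isotopy0}; and the third is Theorem \ref{main1} applied to string links viewed as tangles in the oriented $3$-manifold $D^2\x I$, using that (colored) type $n$ invariants of string links are defined via the usual Vassiliev skein on singular string links.

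For the ``$\Leftarrow$'' direction, assume \ref{vassiliev}, \ref{habegger-meilhan} and the four converses, and take PL links $L, L'$ in $S^3$ with $v(L) = v(L')$ for every finite type invariant $v$. Conjecture \ref{habegger-meilhan} produces, for each $n$, a finite chain $L = L_0, L_1, \dots, L_k = L'$ whose consecutive pairs are closures of string links not separated by type $n$ invariants of string links, and in particular not separated by the PL-isotopy-invariant ones. Applying the composite of the four converses string-link-wise upgrades each consecutive pair to PL isotopic string links, hence $L_j$ and $L_{j+1}$ are PL isotopic as links; concatenating, $L$ and $L'$ are PL isotopic. Therefore $L'$ differs from $L$ by insertion of finitely many local knots along components (up to ambient isotopy); a standard connected-sum factorisation of finite type invariants (multiplicativity of the Kontsevich integral under connected sum, as in Theorem \ref{string+rational}(a), or the clasper-theoretic splitting used in Theorem \ref{string+rational}(b)) transfers the hypothesis ``$v(L) = v(L')$ for every $v$'' into ``every finite type invariant agrees between each inserted local knot and the unknot''; Conjecture \ref{vassiliev} then trivialises those local knots, so $L$ and $L'$ are ambient isotopic.

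For the ``$\Rightarrow$'' direction, assume \ref{habiro}. Then \ref{vassiliev} is immediate by specialising to $1$-component links, and \ref{habegger-meilhan} is vacuous: its hypothesis together with \ref{habiro} forces $L, L'$ to be ambient isotopic, and a length-zero chain witnesses the conclusion. For each of the four converses I reduce to Theorem \ref{rolfsen}: starting from a converse's hypothesis, the adjustment argument underlying Theorem \ref{rolfsen}, combined with Theorems \ref{isotopy0} and \ref{main1} applied at the string-link level, produces PL-isotopic replacements on which every finite type invariant agrees; \ref{habiro} then makes those replacements ambient isotopic, so the original pair is PL isotopic, which via the direct chain gives the stronger conclusion of the converse.

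The principal obstacle I expect is the connected-sum/local-knot step in the ``$\Leftarrow$'' direction, where one must convert the hypothesis that every finite type invariant agrees on $L$ and $L'$ into the statement that each inserted local knot has the finite type invariants of the unknot. For rational invariants this is routine via the multiplicativity of the Kontsevich integral; for integral invariants it requires finer clasper-theoretic bookkeeping, analogous to what the proof of Theorem \ref{string+rational}(b) provides, and this is where the bulk of the technical work will lie.
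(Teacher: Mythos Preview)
Your ``$\Leftarrow$'' direction contains a genuine gap. Conjecture \ref{habegger-meilhan} gives, for each fixed $n$, a chain $L=L_0,\dots,L_k=L'$ of closures of string links not separated by type $n$ invariants; but the chain (its length and its intermediate links) depends on $n$. The composite of the four converses requires the hypothesis ``for each $n$, $L_j$ and $L_{j+1}$ are not separated by type $n$ invariants of string links, well-defined up to PL isotopy'', which you simply do not have for the individual pairs. The repair is easy but changes the logic: apply the four converses directly to the pair $L,L'$ (not to chain pairs), since \ref{habegger-meilhan} gives precisely that for each $n$ the pair $L,L'$ satisfies the bottom statement of the frame. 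This yields $L,L'$ PL isotopic in one step. Your subsequent passage from PL isotopic to ambient isotopic is also imprecise: from $L'\simeq L\# K$ (with $K$ totally split) and $v(L)=v(L')$ for all finite type $v$, you cannot directly conclude that each component of $K$ has the finite type invariants of the unknot. What you actually get, via Lemma \ref{knot-link}(a), is that the componentwise knot types of $L$ and $L'$ share all finite type invariants; then \ref{vassiliev} gives $K_L=K_{L'}$, and Rolfsen's Theorem \ref{rolfsen-lemma0} finishes. This is exactly the route the paper takes, packaged as the implication $(\text{X})\land\ref{vassiliev}\Rightarrow\ref{habiro}$.

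There is also a missing step in your verification of the third forward implication. The links $L,L'$ live in $S^3$, not in $D^2\x I$, so you cannot apply Theorem \ref{main1} to them as string links. You need Proposition \ref{string-quasi}: $n$-quasi-isotopic links in $S^3$ are $(n-1)$-quasi-isotopic \emph{via string links}, and only then does Theorem \ref{2.2} for string links give the conclusion. The paper's proof handles this by routing through the longer chain (0)--(9) of Figure \ref{implications}, defining the auxiliary converses (X), (Y), (Z), and proving the nontrivial equivalence $(\text{Y})\land\ref{vassiliev}\Leftrightarrow\ref{habegger-meilhan}\land\ref{vassiliev}$ via Lemma \ref{destabilization} and Theorem \ref{locally-additive2}; your approach bypasses (Y) but only because under the full hypothesis \ref{habiro} the conclusion of \ref{habegger-meilhan} is vacuous, which is fine for the ``$\Rightarrow$'' direction.
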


\subsection{Organization of material}
The paper is organized as follows.
\begin{itemize}
\item A detailed proof of Theorem \ref{isotopy0} is included in \S\ref{quasi}.
\item Theorem \ref{main1} is proved in \S\ref{fti}.
\item The main result of \S\ref{cn-equivalence} says roughly that finite type invariants of string links are dominated 
by those ones that are additive under insertion of local knots (Theorem \ref{locally-additive2}). 
The proof is based on the clasper calculus.
\item Theorem \ref{rolfsen} is proved in \S\ref{proof-rolfsen}, based on the results of \S\S\ref{quasi}--\ref{cn-equivalence}.
\item Theorem \ref{main2} is proved in \S\ref{proofs2}, based on the results of \S\S\ref{quasi}--\ref{proof-rolfsen}. 
\item Theorem \ref{string+rational} is proved in \S\ref{proofs3}, based on the results of \S\ref{cn-equivalence} and 
\cite{M24-1'}.
\item \S\ref{conway} is devoted to examples, illustrating colored finite type invariants and $n$-quasi-isotopy 
in a more practical light in the context of the Conway polynomial.
\end{itemize}

As a byproduct of the proofs of the main results, the paper also contains alternative proofs of some known results:
\begin{itemize}
\item \S\ref{cn-equivalence} includes a simple visual proof that type $n$ invariants are invariant
under $C_n$-equivalence (Proposition \ref{cn-eq}).
While the proofs of this fact in the literature are not very difficult, the new proof appears to be still easier.
\item \S\ref{cn-equivalence} also contains an alternative proof of a 2009 theorem by G. Massuyeau, which provides 
a partial converse to the previous item (Theorem \ref{massuyeau}).
Our argument proceeds in the original language of string links (as opposed to Massuyeau's language of homology cylinders)
and is a correction of K. Habiro's attempted proof of a slightly weaker result.
\item \S\ref{fti} contains a particularly simple (but asymmetric) form of the Leibniz rule for finite type invariants,
which I did not see in the literature.
\end{itemize}

\section{Basic notions} \label{basic}

By a {\it link} we mean a PL embedding of a closed $1$-manifold in $3$-manifold.
Thus every link is of the form $mS^1\to M$, where $mS^1=\{1,\dots,m\}\x S^1$.
More generally, a {\it tangle} is a proper PL embedding $L\:\Theta\to M$ of compact $1$-manifold in a $3$-manifold, 
where {\it proper} means that $L^{-1}(\partial M)=\partial\Theta$.
Two tangles are called {\it equivalent} if they are ambient isotopic keeping $\partial M$ fixed.
An embedding $\Xi\:\partial\Theta\to\partial M$ is called a {\it boundary pattern}, and a tangle or more generally
a map $L\:\Theta\to M$ is said to be {\it of boundary pattern} $\Xi$ if $L|_{\partial\Theta}=\Xi$.
A {\it string link} is a tangle $mI\to I^3$, where $I=[0,1]$ and $mI=\{1,\dots,m\}\x I$, of the 
{\it string link boundary pattern} $\Xi_m\:\partial(mI)\to\partial I^3$, given by $\Xi_m(k,i)=(\frac{k}{m+1},\frac12,i)$.
The {\it string unlink} $U\:mI\to I^3$ is given by $U(k,t)=(\frac{k}{m+1},\frac12,t)$.
The {\it closure} of a string link $L\:mI\to I^3$ is the link 
$mS^1\cong mI\cup_\partial mI\xr{L\cup U}I^3\cup_\partial I^3\cong S^3$.
The {\it unlink} is the closure of the string unlink.
A {\it knot} is a $1$-component link, and the {\it unknot} is the $1$-component unlink.

\section{$n$-Quasi-isotopy and strong $n$-quasi-isotopy}\label{quasi}

We recall the definitions of $n$-quasi-isotopy and strong $n$-quasi-isotopy \cite{MR1}, \cite{MR2}.
Let $f\:\Theta\to M$ be a PL map of a $1$-manifold in a $3$-manifold with precisely one double point $x=f(p)=f(q)$.
It is called a {\it strong $n$-quasi-embedding} if in addition to the singleton $B_0:=\{x\}$
there exist PL $3$-balls $B_1,\dots,B_n\subset M$ and arcs $J_0,\dots,J_n\subset\Theta$ such that 
$f^{-1}(B_i)\subset J_i$ for each $i\le n$ and $B_i\cup f(J_i)\subset B_{i+1}$ for each $i<n$.
Let us note that since $J_n$ is an arc, $B_n$ can intersect the image of only one component of $\Theta$ (namely, the one 
which contains $J_n$).
Next, $f$ is called an {\it $n$-quasi-embedding} if in addition to the singleton $P_0:=\{x\}$ there exist compact
polyhedra $P_1,\dots,P_n\subset M$ and arcs $J_0,\dots,J_n\subset\Theta$ such that $f^{-1}(P_i)\subset J_i$ 
for each $i\le n$ and $P_i\cup f(J_i)\subset P_{i+1}$ for each $i<n$, and moreover the inclusion $P_i\cup f(J_i)\to P_{i+1}$
is null-homotopic for each $i<n$.
It should be noted that like before, $P_n$ can intersect the image of only one component of $\Theta$.
A PL homotopy $f_t\:\Theta\to M$ is called a {\it (strong) $n$-quasi-isotopy} if it contains only finitely many double points, 
all occurring at distinct time instances $t_1,\dots,t_k\in I$, and each $f_{t_i}$ is a (strong) $n$-quasi-embedding.
Two tangles $L_0,L_1\:\Theta\to M$ are {\it (strongly) $n$-quasi-isotopic} if they are of the same boundary pattern $\Xi$ and are 
connected by a (strong) quasi-isotopy $L_t\:\Theta\to M$ such that each $L_t$ is a proper map of boundary pattern $\Xi$.

\begin{figure}[h]
\includegraphics[width=0.6\linewidth]{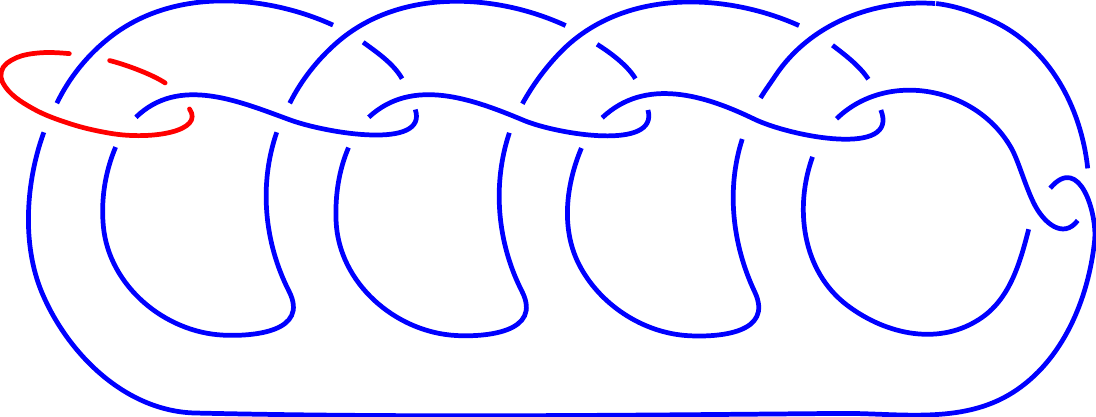}
\caption{The $n$th Milnor link for $n=4$.}
\label{milnor}
\end{figure}

\begin{example} The $n$th Milnor link $M_n$ (see Figure \ref{milnor}) is easily seen to be strongly $(n-1)$-quasi-isotopic to the unlink
(see Figure \ref{milnor-J3}).
It is not $n$-quasi-isotopic to the unlink, which can be shown either ``geometrically'', by using $(n+1)$-cobordism and 
Cochran's derived invariant $\beta^n$ \cite{MR2}*{Corollary 3.6} or ``algebraically'', by using the fundamental group and Milnor's 
invariant $\bar\mu(\underbrace{11\dots11}_{2n}22)$ \cite{MM}*{Theorem 2.12}.
\end{example}

\begin{figure}[h]
\includegraphics[width=\linewidth]{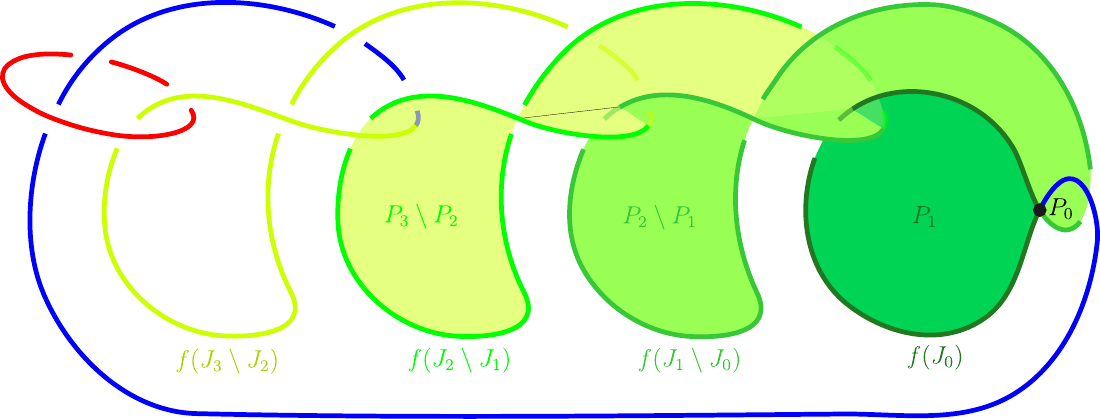}
\caption{The only singular link in a generic homotopy from $M_n$ to the unlink is an $(n-1)$-quasi-embedding $(n=4)$.
Moreover, it is a strong $(n-1)$-quasi-embedding, using that all the $P_i$ are collapsible, and hence their regular neighborhoods are balls.}
\label{milnor-J3}
\end{figure}

\begin{figure}[h]
\includegraphics[width=0.7\linewidth]{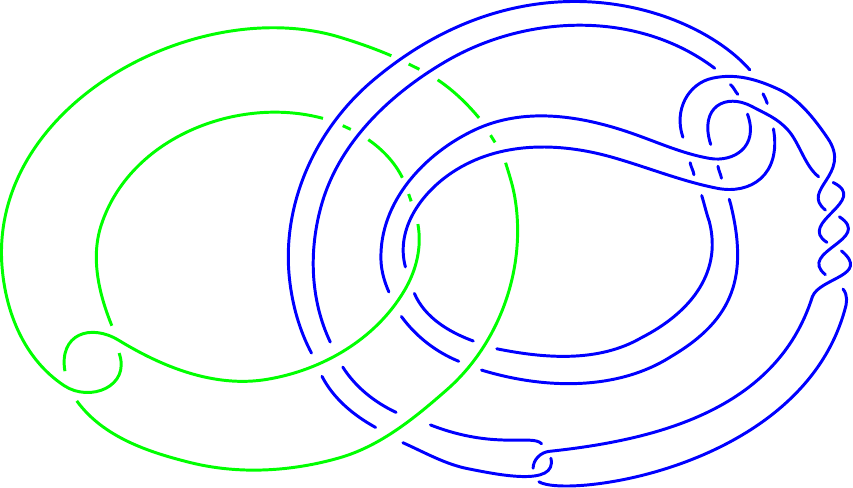}
\caption{The $n$th Whitehead link for $n=3$.}
\label{whitehead}
\end{figure}

\begin{example} \label{wh-n}
The $n$th Whitehead link $W_n$ (see Figure \ref{whitehead}) is the $n$th (untwisted left-handed) Whitehead double 
of either component of the Hopf link (due to the symmetry of the Whitehead link, it does not matter which component is being doubled).
There is an obvious $(n-1)$-quasi-isotopy from $W_n$ to the unlink (see Figure \ref{whitehead-J2}).
It is not hard to show that this particular $(n-1)$-quasi-isotopy is not an $n$-quasi-isotopy and not 
a strong $1$-quasi-isotopy (see \cite{MR1}*{\S1.2}).
\end{example}

\begin{figure}[h]
\includegraphics[width=0.85\linewidth]{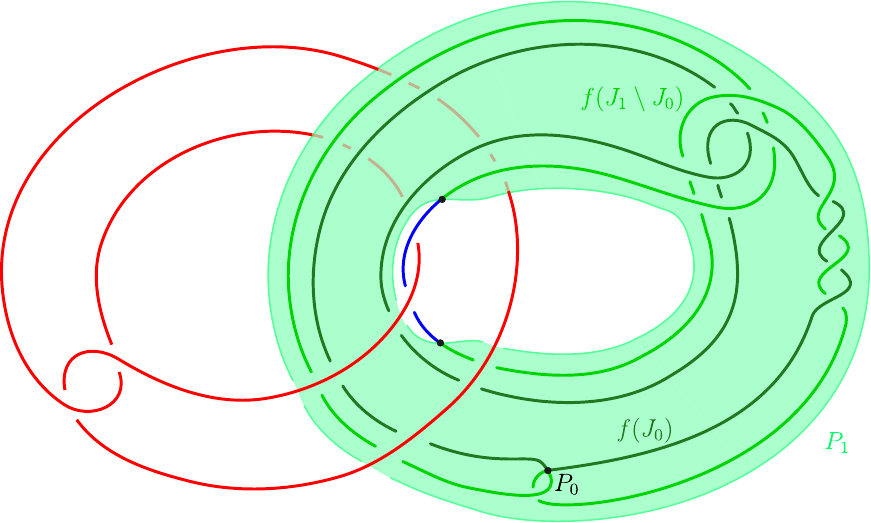}
\bigskip

\includegraphics[width=0.85\linewidth]{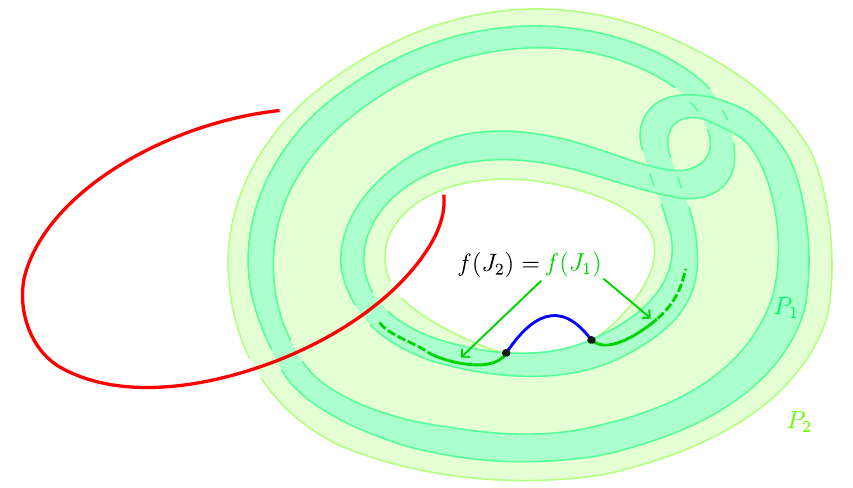}
\caption{The only singular link in a generic homotopy from $W_n$ to the unlink is an $(n-1)$-quasi-embedding $(n=3)$.}
\label{whitehead-J2}
\end{figure}

\begin{conjecture} \cite{MR2} \label{wh-conj}
(a) $W_n$ is not $n$-quasi-isotopic to the unlink.

(b) $W_n$ is not strongly $1$-quasi-isotopic to the unlink.
\end{conjecture}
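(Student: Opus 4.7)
My plan for part (a) is to exhibit an invariant of $n$-quasi-isotopy which distinguishes $W_n$ from the unlink. The most natural candidate is the coefficient $\alpha_n(L)$ of the normalized Conway power series $\bar\nabla_L(z)$, which by the invariance statement announced in Example \ref{conway-ex} (Theorem \ref{conway-quasi}) is preserved by $n$-quasi-isotopy. The base case $n=1$ is already on record: $\alpha_1(W_1)$ is the generalized Sato--Levine invariant and equals $\pm1$ on the Whitehead link, as recalled in Example \ref{conway-ex}. For the inductive step I would exploit the recursive structure of $W_n$ as the untwisted Whitehead double of one component of $W_{n-1}$. Applying the Conway skein relation at the clasp introduced by doubling should reduce $\alpha_n(W_n)$ to a combination of $\alpha_{n-1}(W_{n-1})$ and strictly lower-degree quantities; the untwistedness keeps the linking number zero and, critically, keeps both components unknotted so that $\nabla_{K_1}\nabla_{K_2}=1$ and $\bar\nabla_{W_n}=\nabla_{W_n}$, which eliminates normalization artifacts.

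The main obstacle in this approach is the bookkeeping of $z$-degrees: one must verify that the skein resolution at the outermost clasp contributes to $c_n(W_n)$ exactly $\pm c_{n-1}(W_{n-1})$ modulo terms that do not survive to top degree. An alternative that sidesteps direct skein gymnastics is to use Cochran's derived invariant $\beta^n$, already invoked in the Milnor-link example preceding the conjecture. One would first strengthen Cochran's known $n$-cobordism invariance of $\beta^n$ to invariance under $n$-quasi-isotopy, imitating the Homma--Bryant style argument indicated for Theorem \ref{isotopy0}, and then quote Cochran's original computation giving $\beta^n(W_n)=\pm1$.

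Part (b) is genuinely more delicate: for $n=1$ it follows from (a), but the substance is the range $n\ge 2$, where $W_n$ \emph{is} $(n-1)$-quasi-isotopic to the unlink, so the desired obstruction must be sensitive to strong $1$-quasi-isotopy but insensitive to ordinary $1$-quasi-isotopy. In particular, Theorem \ref{main1} is not directly useful here, since it packages finite type invariants into $n$-quasi-isotopy invariants, not strong ones. My plan is to construct a geometric obstruction from the definition: in any strong $1$-quasi-isotopy from $W_n$ to the unlink, the double point sits inside a $3$-ball $B_1$ which in turn lies in a $3$-ball $B_2$ whose preimage is contained in a single arc. This forces one doubled component to bound a singular disk in the complement of the other with very restricted self-intersection data, which I would attempt to contradict using a Massey triple product (or Cochran's $\beta^n$ for suitable $n$) in the complement of the undoubled component, where $W_n$ is known to have nontrivial higher-order linking.

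The hardest step, and the reason the conjecture remains open, is to produce any genuine invariant of strong $1$-quasi-isotopy that vanishes on all ordinary $1$-quasi-isotopy classes yet is nonzero on $W_n$. The most promising route is to show that invariants of $n$-quasi-isotopy are automatically invariants of strong $1$-quasi-isotopy; this would reduce (b) to (a), but the implication is itself unclear and would likely require a new quasi-embedding trick beyond those in \S\ref{quasi}. Failing that, one must work directly with the geometric definition and the Homma--Bryant style disk replacement, which is precisely the technical heart of the problem.
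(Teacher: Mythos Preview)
This statement is a \emph{conjecture} in the paper, not a theorem: the paper offers no proof, and in the Remark immediately following it explicitly discusses why the obvious invariants fail. So there is no ``paper's own proof'' to compare against; your proposal is an attempt to settle an open problem.

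Your principal approach to part (a) has a fatal gap. You propose to show $\alpha_n(W_n)\ne 0$, but the paper notes (in the Remark after Conjecture~\ref{wh-conj}) that $\nabla_{W_n}=0$ for all $n\ge 2$: smoothing the outermost clasp yields a link of the form $L\cup\partial A$ with $\lk(\text{core of }A,\,L)=0$, so Lemma~\ref{annulus0} kills the smoothed term, while switching the clasp yields a split link. Since both components of $W_n$ are unknotted, $\bar\nabla_{W_n}=\nabla_{W_n}=0$ and every $\alpha_k(W_n)$ vanishes for $n\ge 2$. Your inductive skein computation would simply confirm this vanishing, not produce a nonzero obstruction. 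The multi-variable Conway polynomial vanishes on $W_n$ for $n\ge 2$ as well, so that variant is no help either.

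Your fallback via Cochran's $\beta^n$ is not obviously better. The paper invokes $\beta^n$ for the \emph{Milnor} links $M_n$, where the computation is known; you assert ``Cochran's original computation giving $\beta^n(W_n)=\pm1$'', but no such computation is cited in the paper and I do not believe it is in the literature. More seriously, you would first need to prove that $\beta^n$ is an invariant of $n$-quasi-isotopy (not merely $n$-cobordism), and this is itself nontrivial. For part (b), you correctly identify that the difficulty is producing an invariant of strong $1$-quasi-isotopy that is not already an invariant of ordinary $1$-quasi-isotopy, and you correctly note that no such invariant is currently available; but your sketch does not supply one. In short, both parts remain open for the reasons the paper indicates.
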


Let us note that either (a) or (b) would imply (see Theorem \ref{isotopy}) that each $W_n$ is not topologically isotopic to the unlink.
It is known that $W_n$ is topologically slice, and hence topologically I-equivalent to the unlink for $n\ge 3$ \cite{Fre}.

\begin{remark}
Let us discuss some approaches to Conjecture \ref{wh-conj}.
The Conway polynomial vanishes on $W_n$ for $n\ge 2$ (this follows from Lemma \ref{annulus0} and the Conway skein relation (\ref{conway-skein})).
For similar reasons the multi-variable Conway polynomial also vanishes on $W_n$ for $n\ge 2$.
To compute the Jones polynomial of the Whitehead double a link $L$ along some component $K$ is the same task as to compute the Jones polynomial 
of the link obtained from $L$ by adding a parallel copy of $K$ (this follows from the Jones skein relation and the formula \cite{Lic}*{p.\ 26} 
for reversing the orientation of a component).
So this is closely related to the task of computation of the colored Jones polynomial.
A formula for the colored Jones polynomial of a Whitehead double of a knot is given in \cite{Tan} and \cite{Zh}*{\S2}, and 
the colored Jones polynomial of a twisted version of $W_n$ is calculated in \cite{Wo}*{\S4}.
See also \cite{Fu0} for a related computation.
\end{remark}

\begin{remark} As noted in \cite{MR2}*{Theorem 1.2}, the definition of $n$-quasi-isotopy can be considerably simplified for string links:
A PL map $f\:mI\to I^3$ with precisely one double point is an $n$-quasi-embedding if and only if its singular component is 
$(n-1)$-contractible in the complement to the remaining components. 
Here a compact subset $X$ of a manifold $M$ is called {\it $k$-contractible} in $M$ if there exist compact subpolyhedra
$A\subset P_0\subset\dots\subset P_{k+1}\subset M$ such that each inclusion $P_i\to P_{i+1}$ is null-homotopic.
A relation between $n$-quasi-isotopy of links in $S^3$ and $n$-quasi-isotopy of string links is discussed in Proposition \ref{string-quasi}.
\end{remark}

Many further examples and results on $n$-quasi-isotopy can be found in \cite{MR1} and \cite{MR2}.
See also Theorem \ref{conway-quasi} below.

Theorem \ref{isotopy0} is a consequence of the following

\begin{theorem} \cite{MR1}*{Theorem 1.3} \label{isotopy}
Let $h_t\:\Theta\to M$ be a topological isotopy of a compact $1$-manifold in a $3$-manifold.
Then for each $n$ there exists an $\epsilon>0$ such that every PL homotopy $f_t\:\Theta\to M$
which is $\epsilon$-close to $h_t$ and has only finitely many double points occurring at distinct time instances
is a strong $n$-quasi-isotopy.
\end{theorem}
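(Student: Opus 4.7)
The strategy is to distill from the topological isotopy $h_t$ a uniform quantitative modulus of injectivity, and then at each double point of a sufficiently close PL approximation to build inductively the tower of $n$ PL $3$-balls required by the definition of strong $n$-quasi-embedding. The argument is local near each potential double point and is made uniform in $t$ via the compactness of $[0,1]$.

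\textbf{Uniform injectivity.} The continuous function $(t,p,q)\mapsto d_M(h_t(p),h_t(q))$ is positive on the compact set $\{(t,p,q)\in[0,1]\x\Theta\x\Theta : d_\Theta(p,q)\ge\delta\}$ (because each $h_t$ is injective), hence is bounded below there. This yields, for every $\delta>0$, an $\eta(\delta)>0$ such that $d_M(h_t(p),h_t(q))<\eta(\delta)$ implies $d_\Theta(p,q)<\delta$, uniformly in $t$.

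\textbf{Nested construction.} Fix $n$, and choose decreasing sequences $\delta_0>\cdots>\delta_n>0$ and $0<\eta_i<\eta(\delta_i)$, arranged geometrically so that each gap $\eta_{i-1}-\eta_i$ comfortably dominates both the approximation error and the modulus of continuity of $h$ at $\Theta$-scale $\delta_i$; then set $\epsilon$ accordingly small. For any $\epsilon$-close PL approximation $f_t$ and any double point $x=f_{t_0}(p)=f_{t_0}(q)$, the uniform injectivity estimate forces $d_\Theta(p,q)<\delta_n$, so one may take $J_0\subset\Theta$ to be the short arc between $p$ and $q$. Inductively, I would then take $B_{i+1}\subset M$ to be a PL $3$-ball centered at $x$ of radius slightly less than $\eta_{n-i-1}$, with $\partial B_{i+1}$ in PL general position against the polyhedron $f_{t_0}(\Theta)$; and $J_{i+1}\supset J_i$ to be an arc in $\Theta$ of $\Theta$-diameter at most $\delta_{n-i-1}$ around $p$. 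The inclusion $B_i\cup f_{t_0}(J_i)\subset B_{i+1}$ holds because $J_i$ has small $\Theta$-diameter and $f_{t_0}$ is close to the uniformly continuous $h$. The inclusion $f_{t_0}^{-1}(B_{i+1})\subset J_{i+1}$ follows from Step 1 applied to any $y\in f_{t_0}^{-1}(B_{i+1})$: the triangle inequality gives $d_M(h_{t_0}(y),h_{t_0}(p))<\eta_{n-i-1}+2\epsilon<\eta(\delta_{n-i-1})$, hence $d_\Theta(y,p)<\delta_{n-i-1}$.

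\textbf{Main obstacle.} The principal difficulty is the quantitative bookkeeping required to survive $n$ nestings: each level loses a fixed amount of slack, so the required $\epsilon$ decays with $n$, and the sequences $\delta_i$ and $\eta_i$ must be chosen geometrically with a sufficient ratio to absorb the combined losses from the approximation error and the modulus of continuity of $h$. A secondary technical point is the realization of each $B_i$ as a genuine PL $3$-ball (rather than a metric ball) with boundary transverse to $f_{t_0}(\Theta)$; this is a standard PL general position adjustment and does not affect the quantitative estimates. Finally, compactness of $[0,1]$ together with the hypothesis that $f_t$ has only finitely many double points occurring at distinct times guarantees that a single $\epsilon$ works across the whole isotopy.
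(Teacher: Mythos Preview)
Your approach is correct and essentially matches the paper's: both extract from compactness a uniform estimate of the form ``if $h_t(p)$ and $h_t(q)$ are close in $M$ then $p$ and $q$ lie on a short arc in $\Theta$ whose $h_t$-image has small diameter'' (the paper packages your uniform injectivity and uniform continuity steps into a single lemma), and then take the $B_i$ to be concentric metric balls about the double point with the $J_i$ the corresponding short arcs in $\Theta$. The one trick you are missing is that the paper sidesteps your ``secondary technical point'' entirely by first choosing a metric on $M$ whose closed balls are already PL $3$-balls (via a cubical embedding of a triangulation into $\R[V]$ with the $l_\infty$ metric), so no general-position adjustment of the $B_i$ is needed.
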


Since the proof sketch given in \cite{MR1} covers explicitly only the case $n=1$, we include the details here.
(The case $n=1$ might be the most important one for the purposes of \cite{MR1}*{Problem 1.1}, but it certainly does not suffice 
for the purposes of the present paper.)

\begin{proof}
To simplify the argument we use a metric on $M$ whose closed $\epsilon$-balls are PL balls.
For instance, for $M=\R^3$ the usual $l_\infty$ metric will do.
In general, one such metric on $M$ is given by the following construction (see \cite{M-up} for its further discussion).
Let $K$ be a triangulation of $M$ and $V$ the set of its vertices.
The set $\R[V]$ of all finite formal linear combinations $\lambda_1v_1+\dots+\lambda_kv_k$, where $v_i\in V$ and $\lambda_i\in\R$, 
is a vector space over $\R$, which can be endowed with the $l_\infty$ metric.
Let $f\:M\to\R[V]$ be defined by sending the barycenter of every simplex $\sigma$ of $K$ to the sum of the vertices of $\sigma$
and extending linearly to every simplex of the barycentric subdivision of $K$.
It is easy to see that $f$ is an embedding and that the $f$-image of each $k$-simplex $\sigma$ of $K$ is a union of $k+1$ of $k$-dimensional
faces of the cube $[0,1]^{k+1}\subset\R^{k+1}=\R[\text{vertices of $\sigma$}]$.
It follows that the induced metric on $M$ is as desired.

\begin{lemma}\label{strong lemma}
For every $\delta>0$ there exists a $\gamma=\gamma(\delta)>0$ such that for any triple $(p,q,t)\in\Theta\x\Theta\x I$ 
such that $h_t(p)$ and $h_t(q)$ are $\gamma$-close, $p$ and $q$ belong to the same component $C$ of $\Theta$ and are not antipodal in it 
(in the event that $C\cong S^1$), and the $h_t$-image of the shortest arc $[p,q]\subset C$ between $p$ and $q$ is of diameter $<\delta$.
\end{lemma}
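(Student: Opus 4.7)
My plan is to deduce the lemma from two uses of uniform continuity on the compact space $\Theta\x I$. The key observation is that the topological isotopy $h_t$ assembles into a level-preserving map
\[
H\:\Theta\x I\to M\x I,\qquad H(p,t)=(h_t(p),t),
\]
which is a continuous injection from a compact metric space to a Hausdorff space, hence a homeomorphism onto its image. In particular $H^{-1}$ is continuous on the compact image $H(\Theta\x I)$, therefore uniformly continuous. This uniform continuity will do all the work.

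First I would choose $\eta>0$ in $\Theta$ to achieve three things simultaneously: (i)~$\eta$ is smaller than the distance between any two distinct components of $\Theta$, so that any $\eta$-close pair $p,q$ lies in a common component $C$; (ii)~$\eta$ is smaller than a quarter of the length of any circle component of $\Theta$, so that the shortest arc $[p,q]\subset C$ is well defined and $p,q$ are not antipodal; (iii)~by uniform continuity of $h\:\Theta\x I\to M$, if $d_\Theta(p,r)<\eta$ then $d_M(h_t(p),h_t(r))<\delta/2$ for every $t\in I$. Condition (iii) then forces every point of $[p,q]$ to be mapped by $h_t$ within $\delta/2$ of $h_t(p)$, so $\operatorname{diam} h_t([p,q])<\delta$.

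Second, with this $\eta$ fixed, I would invoke the uniform continuity of $H^{-1}$ on $H(\Theta\x I)$ to pick $\gamma>0$ such that any two points of $H(\Theta\x I)$ lying at distance less than $\gamma$ in $M\x I$ have $H$-preimages at distance less than $\eta$ in $\Theta\x I$. Given a triple $(p,q,t)$ with $d_M(h_t(p),h_t(q))<\gamma$, the points $H(p,t)$ and $H(q,t)$ agree in the $I$-coordinate and are $\gamma$-close, so the chosen $\gamma$ yields $d_\Theta(p,q)<\eta$, and (i)--(iii) above give exactly the three conclusions of the lemma.

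There is essentially no obstacle beyond the bookkeeping of constants; the single nontrivial ingredient is that one constant $\gamma$ works for \emph{all} $t\in I$ at once, which is precisely why the argument is phrased through the embedding $H$ and uses compactness of $I$.
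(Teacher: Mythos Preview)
Your proof is correct. It takes a somewhat different route from the paper's, though both ultimately rest on the same compactness.

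The paper argues in one stroke: it defines the open set $V_\delta\subset\Theta\times\Theta\times I$ of ``good'' triples (same component, not antipodal, $\operatorname{diam}h_t([p,q])<\delta$), pushes its compact complement forward to $K_\delta\subset M\times M$ via $(p,q,t)\mapsto(h_t(p),h_t(q))$, observes that $K_\delta$ misses the diagonal $\Delta_M$ because each $h_t$ is injective, and takes $\gamma$ to be the distance from $K_\delta$ to $\Delta_M$. No intermediate bound on $d_\Theta(p,q)$ is ever produced.

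Your approach factors through such a bound: you first use uniform continuity of the inverse of the track embedding $\Theta\times I\hookrightarrow M\times I$ to force $d_\Theta(p,q)<\eta$, and then use uniform continuity of $h$ in the forward direction to control $\operatorname{diam}h_t([p,q])$. This is slightly more modular and perhaps more intuitive, since it isolates the step ``close images imply close preimages, uniformly in $t$''. One small point worth making explicit: in step (iii) you are tacitly using the arc-length metric on each component of $\Theta$, so that every $r\in[p,q]$ satisfies $d_\Theta(p,r)\le d_\Theta(p,q)<\eta$; this is harmless since the choice of metric on the compact space $\Theta$ is at your disposal, but it should be said. The paper's argument sidesteps this by never passing through $d_\Theta$ at all.
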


\begin{proof}
Given a $\delta>0$, let $V_\delta\subset\Theta\x\Theta\x I$ denote the set of all triples $(p,q,t)$ such that $p$ and $q$ 
belong to the same component of $\Theta$ and are not antipodal in it, and $h_t([p,q])$ is of diameter $<\delta$.
It is easy to see that $V_\delta$ is open and contains $\Delta_\Theta\x I$, where $\Delta_X=\{(x,x)\in X\x X\}$.
Then $\Theta\x\Theta\x I\but V_\delta$ is compact, and hence so is its image $K_\delta$ under the map $H\:\Theta\x\Theta\x I\to M\x M$, 
defined by $H(x,y,t)=\big(h_t(x),h_t(y)\big)$.
Since $h_t$ is an isotopy, $K_\delta$ is disjoint from $\Delta_M$.
Then there exists a $\gamma>0$ such that $M\x M\but K_\delta$ contains every pair $(x,y)\in M\x M$ such that 
$x$ and $y$ are at a distance $\le\gamma$.
\end{proof}

Let $\epsilon_{n+1}$ be any positive real number.
Assuming that $\epsilon_i$ is defined, let $\epsilon_{i-1}=\frac12\gamma(\frac{\epsilon_i}2)$ be given by Lemma \ref{strong lemma}.
Let us note that each $\epsilon_{i-1}\le\frac{\epsilon_i}4$.
Finally, set $\epsilon=\epsilon_0$.

Let $f_t\:(\Theta,\partial\Theta)\to (M,\partial M)$ be a PL homotopy, $\epsilon$-close to $h_t$ and possessing only finitely many double points 
that occur at distinct time instances $t_1,\dots,t_k\in I$.
Let us fix some $j\in\{1,\dots,k\}$, let us write $f=f_{t_j}$ and $h=h_{t_j}$, and let $x:=f(p)=f(q)$, $p\ne q$, be 
the corresponding double point.
Let $B_0=\{x\}$ and for $i=1,\dots,n$ let $B_i$ be the closed ball of radius $\epsilon_i$ about $x$.
Thus $B_0\subset\dots\subset B_n$.

Since $h$ is $\eps$-close to $f$, the distance between $h(p)$ and $h(q)$ is at most $2\epsilon=\gamma(\frac{\epsilon_1}2)$.
Hence by Lemma \ref{strong lemma} $p$ and $q$ belong to the same component of $\Theta$ and are not antipodal in it, 
and $h([p,q])$ is of diameter $<\frac{\epsilon_1}2$.
Set $J_0=[p,q]$.
Then $f(J_0)$ is of diameter $<\frac{\epsilon_1}2+2\epsilon\le\epsilon_1$.
Hence it lies in $B_1$.

Let $i\in\{1,\dots,n\}$.
Given any $r\in\Theta$ such that $f(r)\in B_i$, the distance from $f(r)$ to $x=f(p)$ is at most $\epsilon_i$.
Hence the distance from $h(r)$ to $h(p)$ is at most $\epsilon_i+2\epsilon\le 2\epsilon_i=\gamma(\frac{\epsilon_{i+1}}2)$.
Then by Lemma \ref{strong lemma} $r$ belongs to the same component of $\Theta$ as $p$ and is not antipodal to $p$ in it, and $h([p,r])$ 
is of diameter $<\frac{\epsilon_{i+1}}2$.
Hence $f([p,r])$ is of diameter $<\frac{\epsilon_{i+1}}2+2\epsilon\le\epsilon_{i+1}$.
Hence it lies in $B_{i+1}$, unless $i=n$ (in which case $B_{i+1}$ is undefined, because it is not needed).
Let $J_i$ be the union of all arcs of the form $[p,r]$, where $r\in\Theta$ is such that $f(r)\in B_i$.
Clearly $J_i$ is closed, connected and does not contain the antipode of $p$, so it is a closed arc.
By construction $f^{-1}(B_i)\subset J_i$ and if $i<n$ also $f(J_i)\subset B_{i+1}$.
\end{proof}

Suppose that the $1$-manifold $\Theta$ consists of $m$ connected components $\Theta_1,\dots,\Theta_m$.
A {\it (strong) $(n_1,\dots,n_m)$-quasi-isotopy} $\Theta\to M$ is a consecutive composition of homotopies 
such that each of them is a (strong) $n_i$-quasi-isotopy with double points occurring only in $\Theta_i$.

The proof of Theorem \ref{isotopy} works to prove the following

\begin{theorem} \label{isotopy'}
Let $h_t\:\Theta\to M$ a topological isotopy.
Then there exist numbers $\epsilon_0,\epsilon_1,\dots$ such that every PL homotopy $f_t\:\Theta\to M$
whose restriction to every $\Theta_i$ is $\epsilon_{n_i}$-close to $h_t|_{\Theta_i}$ and has only finitely 
many double points occurring at distinct time instances is a strong $(n_1,\dots,n_m)$-quasi-isotopy.
\end{theorem}

\section{Finite type invariants and colored finite type invariants}\label{fti}

By a {\it singular tangle} we mean a proper PL map $L\:\Theta\to M$ of a compact $1$-manifold in a $3$-manifold, 
which has no triple points and only finitely many double points, and every its double point $L(p)=L(q)$ is 
{\it rigid} in the sense that the tangent vectors $dL_p(1)$ and $dL_q(1)$ exist (i.e.\ $L$ has no kinks at $p$ 
and $q$) and hence are linearly independent.
Two singular tangles $L,L'\:\Theta\to M$ are called {\it equivalent} if they are ambient isotopic keeping $\partial M$ 
fixed and so that all the double points remain rigid at every time instant.

For the rest of this section we fix a compact $1$-manifold $\Theta$, an oriented $3$-manifold $M$ and 
a boundary pattern $\Xi\:\partial\Theta\to\partial M$, and by ``tangles'' or ``singular tangles'' mean only 
those of this boundary pattern.
(For example, these can be links in $S^3$ or string links.)

Given an invariant $v$ of tangles, which assumes values in an abelian group $G$, we claim that it uniquely extends 
to a $G$-valued invariant $v^\x$ of singular tangles satisfying the relation
\[v^\x(L)=v^\x(L_+)-v^\x(L_-), \tag{V}\label{vassiliev-extension}\]
where $L$, $L_+$ and $L_-$ agree outside a small ball, and inside this ball they are as follows:%
\footnote{Let us note that $L_+$ and $L_-$ can be distinguished from each other without using the plane diagram.
Namely, the frame $\big(d(L_\epsilon)_p(1),\,d(L_\epsilon)_q(1),\,L_\epsilon(p)-L_\epsilon(q)\big)$ in $T_{L(p)}M$ 
preserves its sign when $p$ and $q$ are interchanged (and also when the orientation of $\Theta$ is reversed), but reverses the sing
when $\epsilon$ is reversed.}
\smallskip

\begin{center}
\includegraphics[width=0.4\linewidth]{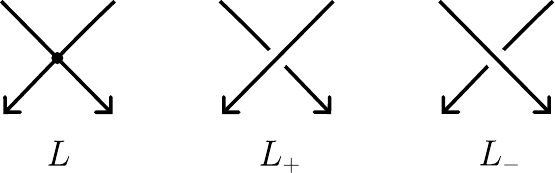}
\end{center}
\smallskip

\noindent
Indeed, the value of $v^\x$ on a given singular tangle can be computed by inductively resolving 
its double points in some order, and applying the relation (\ref{vassiliev-extension}) at each stage.
This results in the formula 
\[v^\x(L)=\sum_{\epsilon_1,\dots,\epsilon_n\in\{1,-1\}}\epsilon_1\cdots\epsilon_n v(L_{\epsilon_1\dots\epsilon_n}) \tag{V$'$}\label{vassiliev-extension'}\]
for every singular tangle $L$ with $n$ double points, where $L_{\epsilon_1\dots\epsilon_n}$ is the tangle obtained by resolving $L$ 
at all its double points, with the resolution at the $i$th double point being of the sign $\epsilon_i$.
But it is easy to see that this formula is not affected by reordering the double points of $L$.
(And this is where we need the group $G$ to be abelian.)

A $G$-valued invariant $v$ of tangles is said to be of {\it type $n$} if $v^\x$ vanishes on 
all singular tangles with $n+1$ double points (and therefore also on all singular tangles with $>n$ double points).
It is said to be of {\it finite type} if it is of type $n$ for some $n$.
Finite type invariants were introduced by M. Gusarov (cf.\ the editor's comment in \cite{Gu2}) and
independently by V. Vasiliev \cite{Va0}; the case of links appears in \cite{Gu0}.

The following lemmas follow easily from the definition.

\begin{lemma} \label{fti-closure} Let $v$ be a type $n$ invariant of $m$-component links in $S^3$.
Let $v_c$ be an invariant of $m$-component string links defined by evaluating $v$ on the the closure.
Then $v_c$ is a type $n$ invariant.
\end{lemma}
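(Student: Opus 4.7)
The plan is to verify the lemma directly from the Vassiliev extension formula (V$'$), exploiting the fact that closure is a local-in-$\partial I^3$ operation that commutes with resolution of double points.

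First I would observe that if $L\:mI\to I^3$ is a singular string link, then its closure $\hat L\:mS^1\to S^3$ is a singular link with the same number of double points: the closure adds to $L$ the string unlink $U$ on the outside of $I^3$, and neither introduces new crossings nor disturbs the rigidity of the existing ones. Moreover, every double point of $L$ lies in the interior of $I^3$, so any small ball used to resolve a double point can be chosen disjoint from $\partial I^3$. It follows that closure commutes with the two local resolutions appearing in (\ref{vassiliev-extension}); writing $L_{\epsilon_1\dots\epsilon_k}$ for the resolution of a singular tangle $L$ at all of its double points with prescribed signs, we have the identity
\[\widehat{L_{\epsilon_1\dots\epsilon_k}}=(\hat L)_{\epsilon_1\dots\epsilon_k}.\]

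Next I would apply the formula (\ref{vassiliev-extension'}) to $v_c$ on a singular string link $L$ with $k$ double points. By the definition of $v_c$ and the identity above,
\[v_c^\x(L)=\sum_{\epsilon_i\in\{1,-1\}}\epsilon_1\cdots\epsilon_k\, v_c(L_{\epsilon_1\dots\epsilon_k})=\sum_{\epsilon_i\in\{1,-1\}}\epsilon_1\cdots\epsilon_k\, v\bigl((\hat L)_{\epsilon_1\dots\epsilon_k}\bigr)=v^\x(\hat L).\]
Thus $v_c^\x=v^\x\circ\widehat{(\cdot)}$ on singular string links.

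Finally, if $L$ is a singular string link with $n+1$ double points, then $\hat L$ is a singular link with $n+1$ double points, so $v^\x(\hat L)=0$ since $v$ is of type $n$; hence $v_c^\x(L)=0$. This shows $v_c$ is of type $n$. There is really no obstacle here: the only point worth checking carefully is that closure genuinely commutes with local resolution (so that the two Vassiliev extensions agree), which follows because all double points of a singular string link are confined to the interior of $I^3$, disjoint from the closure arcs.
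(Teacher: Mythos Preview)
Your proof is correct and is exactly the argument the paper has in mind: the lemma is stated as following ``easily from the definition,'' and what you have written is precisely that easy verification, namely that closure commutes with local resolution so that $v_c^\x(L)=v^\x(\hat L)$.
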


\begin{lemma} \label{knot-link} (a) Let $v$ be a type $n$ invariant of knots in $S^3$.
Let $v_i^\lambda$ be an invariant of $m$-component links in $S^3$ defined by evaluating $v$ on the $i$th component.
Then $v_i^\lambda$ is a type $n$ invariant.

(b) Let $v$ be a type $n$ invariant of $m$-component links in $S^3$.
Let $v_i^\kappa$ be an invariant of knots in $S^3$ defined by evaluating $v$ on the totally split link whose $i$th component
is the given knot and the other components are unknotted.
Then $v_i^\kappa$ is a type $n$ invariant.
\end{lemma}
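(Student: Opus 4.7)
The plan is to prove both parts by direct application of the explicit resolution formula (\ref{vassiliev-extension'}).

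For part (a), given a singular $m$-component link $L$ with $n+1$ double points, I would classify each double point as (A) a self-intersection of component $i$, (B) an intersection between component $i$ and a different component, or (C) a double point disjoint from component $i$. By (\ref{vassiliev-extension'}),
\[(v_i^\lambda)^\x(L) = \sum_{\epsilon_1,\dots,\epsilon_{n+1}\in\{1,-1\}} \epsilon_1\cdots\epsilon_{n+1}\, v\big((L_\epsilon)_i\big),\]
where $(L_\epsilon)_i$ denotes the $i$th component of the resolution $L_\epsilon$, viewed as a knot in $S^3$. If the $k$th double point is of type (C), then $(L_\epsilon)_i$ is manifestly independent of $\epsilon_k$; if it is of type (B), then the two resolutions at that crossing produce knots differing only by a local modification in a small ball, which is undone by a small isotopy of $K_i$ alone (pushing it slightly over or under the transverse arc of the other component). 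In either case, summing over $\epsilon_k\in\{1,-1\}$ while holding the other signs fixed produces $+v(\cdot)-v(\cdot)=0$, so the full sum vanishes. Therefore the only surviving case is that every double point is of type (A); but then the $i$th component of $L$ itself is a singular knot with $n+1$ double points, the formula collapses to $v^\x(K_i)$, and vanishing follows from the type $n$ hypothesis on $v$.

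For part (b), writing $L\mapsto\tilde L$ for the operation that sends a knot $L$ to the totally split $m$-component link with $L$ as $i$th component and unknots in all other components, I would observe that a crossing change on $L$ corresponds, under this operation, to the same crossing change on the $i$th component of $\tilde L$. Induction on the number of double points, using (\ref{vassiliev-extension}), then yields $(v_i^\kappa)^\x(L)=v^\x(\tilde L)$ for every singular knot $L$. If $L$ has $n+1$ double points, so does $\tilde L$, and therefore $v^\x(\tilde L)=0$ by the type $n$ hypothesis on $v$.

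The main (and fairly minor) obstacle lies in the case (B) verification of part (a): one must notice clearly that resolving a crossing between component $i$ and another component $j$ leaves the isotopy type of $K_i$ in $S^3$ unchanged, which amounts to the observation that inside the small ball around the crossing one can push the relevant arc of $K_i$ slightly above or below the transverse arc of $K_j$ without disturbing anything else on $K_i$. Once this is in hand, both statements reduce to sign bookkeeping in the resolution formula.
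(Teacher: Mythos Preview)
The paper does not actually prove this lemma: it simply asserts that it ``follows easily from the definition'' and moves on. Your proposal is a correct and complete elaboration of exactly the kind of direct verification the paper has in mind, so there is nothing substantive to compare.

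One small stylistic remark on part (a): your case (B) argument is fine, but you can phrase it even more simply. When the $k$th double point is an intersection of component $i$ with some other component $j$, the two resolutions $L_{\epsilon,+}$ and $L_{\epsilon,-}$ (with the other signs fixed) have $i$th components whose images in $S^3$ differ only inside the small ball, where the arc of $K_i$ lies on one side or the other of the arc of $K_j$; once $K_j$ is forgotten, these are visibly isotopic rel boundary inside that ball. This is exactly what you say, but you need not frame it as an ``obstacle'' --- it is immediate.
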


It is easy to see that $G$-valued type $n$ invariants of tangles $\Theta\to M$ of boundary pattern $\Xi$ form an abelian group.
In particular, $G$-valued type $n$ invariants of $m$-component links in $S^3$ form an abelian group $G_{m,n}$.
Lemma \ref{knot-link} identifies the direct product of $m$ copies of $G_{1,n}$ with a direct summand of $G_{m,n}$.

An alternative notion of finite type invariants was introduced by Kirk and Livingston \cite{KL}.
Let us recall that a {\it link map} is a map which sends distinct connected components to disjoint sets.
A $G$-valued invariant $v$ of tangles is said to be of {\it colored type $n$} if $v^\x$ 
vanishes on those singular tangles with $n+1$ double points that are link maps (and therefore also on those singular 
tangles with $>n$ double points that are link maps).
The invariant $v$ is said to be of {\it colored finite type} if it is of colored type $n$ for some $n$.

\begin{example} The colored type $0$ invariant $v(L)=(-1)^{\lk(L)}$ is not of finite type.

Indeed, arguing by induction, it is easy to see that $v^\x(L_s)=\pm 2^k$ for any singular link $L_s$ with
$k$ intersections between distinct components and no self-intersections. 
\end{example}

\begin{example} The coefficients of the Conway polynomial are of finite type, but their colored type is usually less than their type 
(see Lemma \ref{conway-coefficients}).
The coefficients of the multi-variable Conway polynomial are ``naturally occurring'' invariants of colored finite type that are not 
of finite type \cite{M24-2}. 
\end{example}

It is sometimes more natural to consider a different filtration on colored finite type invariants, which is implicit in
\cite{ShYa}*{pp.\ 883, 886}.
Assuming that $\Theta$ consists of $m$ connected components $\Theta_1,\dots,\Theta_m$, the invariant $v$ is said to be 
of {\it type $(k_1,\dots,k_m)$} if $v^\x$ vanishes on all singular links with $k_1+1$ self-intersections of $\Theta_1$,
on all singular links with $k_2+1$ self-intersections of $\Theta_2$, and so on.
It is easy to see that every type $(k_1,\dots,k_m)$ invariant is a colored type $k_1+\dots+k_m$ invariant; 
and every colored type $n$ invariant is a type $(n,\dots,n)$ invariant.
Thus $v$ is of colored finite type if and only if it is of type $(k_1,\dots,k_m)$ for some $k_1,\dots,k_m$.

\begin{lemma} \label{leibniz}
Let $R$ be a ring and $u$, $v$ be $R$-valued invariants of tangles $\Theta\to M$ with a fixed boundary pattern.
Then
\[(uv)^\x(L)=\sum_{\substack{S\cup T=\Delta_L\\ S\cap T=\emptyset}} u^\x(L_{S+})v^\x(L_{T-}),\] 
where for a singular tangle $L$ with a finite set $\Delta_L$ of double points and for an $S\subset\Delta_L$, by $L_{S+}$ 
(resp.\ $L_{S-}$) we denote the partial resolution of $L$ where the double points in $S$ are resolved positively (resp.\ negatively)
and the double points in $\Delta_L\but S$ remain unresolved.
\end{lemma}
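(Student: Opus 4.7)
The plan is to prove the identity by expanding both sides via the explicit resolution formula (V$'$) and organizing the resulting sums as products over the double point set $\Delta_L$.

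First I would rewrite the left hand side. Applying (V$'$) gives
\[(uv)^\x(L)=\sum_{\tau\:\Delta_L\to\{\pm 1\}}\Bigl(\prod_{d\in\Delta_L}\tau(d)\Bigr)\,u(L_\tau)\,v(L_\tau),\]
where $L_\tau$ is the full resolution of $L$ at every double point, with the sign at $d$ given by $\tau(d)$. Now I would expand the right hand side: $L_{S+}$ is a singular tangle with double point set $T$, so (V$'$) yields $u^\x(L_{S+})=\sum_{\epsilon\:T\to\{\pm 1\}}\bigl(\prod_{t\in T}\epsilon(t)\bigr)u(L_{S+,\epsilon})$, and symmetrically $v^\x(L_{T-})=\sum_{\delta\:S\to\{\pm 1\}}\bigl(\prod_{s\in S}\delta(s)\bigr)v(L_{T-,\delta})$. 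Plugging these in, the right hand side becomes a quadruple sum over the partition $(S,T)$ of $\Delta_L$ together with the sign functions $\delta$ on $S$ and $\epsilon$ on $T$.

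Next I would reindex this sum by recording, for each double point $d$, the pair $\bigl(\sigma_u(d),\sigma_v(d)\bigr)\in\{\pm 1\}^2$ of resolution signs seen by $u$ and by $v$, together with the remaining state information (whether $d\in S$ or $d\in T$, and the corresponding value of $\delta(d)$ or $\epsilon(d)$). Since both the overall sign and the choice of state factor independently across double points, while the values $u(L_{\sigma_u})$ and $v(L_{\sigma_v})$ depend only on the pair $(\sigma_u,\sigma_v)$, the right hand side factorizes as
\[\sum_{\sigma_u,\sigma_v\:\Delta_L\to\{\pm 1\}}u(L_{\sigma_u})\,v(L_{\sigma_v})\,\prod_{d\in\Delta_L}\Sigma_d(\sigma_u(d),\sigma_v(d)),\]
where $\Sigma_d(a,b)$ is the sum of signs over all states at $d$ producing the pair $(a,b)$.

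A direct enumeration of the four cases is the key combinatorial step: if $d\in S$ then $\sigma_u(d)=+$ is forced and $\sigma_v(d)=\delta(d)$ with sign $\delta(d)$, whereas if $d\in T$ then $\sigma_v(d)=-$ is forced and $\sigma_u(d)=\epsilon(d)$ with sign $\epsilon(d)$. Hence $\Sigma_d(+,+)=+1$, $\Sigma_d(-,-)=-1$, and $\Sigma_d(+,-)=(-1)+(+1)=0$, $\Sigma_d(-,+)=0$. Only the diagonal contributions $\sigma_u=\sigma_v=:\tau$ survive, with total weight $\prod_d\tau(d)$, which recovers the expression for $(uv)^\x(L)$ found above. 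The only mild ``obstacle'' is the bookkeeping in the last step; once the states at each double point are classified, the cancellation of the off-diagonal pairs $(+,-)$ and $(-,+)$ makes the identity transparent.
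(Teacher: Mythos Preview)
Your argument is correct and takes a genuinely different route from the paper. The paper argues by induction on $|\Delta_L|$: it singles out a double point $x$, writes $(uv)^\x(L)=(uv)^\x(L_{\{x\}+})-(uv)^\x(L_{\{x\}-})$, applies the induction hypothesis to each term, and then regroups by subtracting and adding the cross term $u^\x(L_{\{x\}+,\,S+})v^\x(L_{\{x\}-,\,T-})$ inside the sum. Your approach instead expands both sides fully via (V$'$) and reduces the identity to a local count at each double point, where the off-diagonal pairs $(+,-)$ cancel because the two contributing states carry opposite signs, and $(-,+)$ never occurs at all. The inductive proof is slightly shorter and avoids any global bookkeeping; your expansion makes the mechanism of cancellation transparent and would adapt readily to variants of the Leibniz rule. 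One cosmetic remark: in your enumeration you might note explicitly that $\Sigma_d(-,+)=0$ is a vacuous sum (no state produces that pair), to distinguish it from the genuine two-term cancellation at $(+,-)$.
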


When $L$ has only one double point, the Leibniz rule of Lemma \ref{leibniz} specializes to
\[(uv)^\x(L)=u(L_+)v^\x(L)+u^\x(L)v(L_-).\]

A more symmetric, but more complicated Leibniz rule is proved in \cite{Wi}*{Theorem 3.2}.
We will use the specific form of our Leibniz rule in the proof of Proposition \ref{ctype2}.

\begin{proof} Let us note that the lemma holds trivially for non-singular tangles.
Let us assume that it holds for singular tangles with fewer double points than $L$.
Pick some $x\in\Delta_L$.
Then by (\ref{vassiliev-extension}) $(uv)^\x(L)=(uv)^\x(L_{\{x\}+})-(uv)^\x(L_{\{x\}-})$.
Applying the induction hypothesis and abbreviating $(L_{S\epsilon})_{T\delta}$ by $L_{S\epsilon,\,T\delta}$, the latter expression
takes the form
\[\sum_{\substack{S\cup T=\Delta_L\but\{x\}\\ S\cap T=\emptyset}} u^\x(L_{\{x\}+,\,S+})v^\x(L_{\{x\}+,\,T-})-u^\x(L_{\{x\}-,\,S+})v^\x(L_{\{x\}-,\,T-}).\]
If we subtract $u^\x(L_{\{x\}+,\,S+})v^\x(L_{\{x\}-,\,T-})$ from the left hand term and add it to the right hand term, we obtain
\[\sum_{\substack{S\cup T=\Delta_L\but\{x\}\\ S\cap T=\emptyset}} u^\x(L_{\{x\}+,\,S+})v^\x(L_{T-})+u^\x(L_{S+})v^\x(L_{\{x\}-,\,T-}).\]
But this clearly equals the right hand side of (\ref{leibniz}).
\end{proof}

\begin{corollary} \cite{Gu1}, \cite{Wi} \label{product}
(a) If $u$, $v$ are of (colored) types $k$, $l$ respectively, then $uv$ is of (colored) type $k+l$.

(b) If $u$, $v$ are of types $(k_1,\dots,k_m)$ and $(l_1,\dots,l_m)$ respectively, then $uv$ is of type $(k_1+l_1,\dots,k_m+l_m)$.
\end{corollary}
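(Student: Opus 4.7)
The plan is to apply the Leibniz rule of Lemma \ref{leibniz} directly, reducing both (a) and (b) to a pigeonhole count on the cardinalities of the partitioning sets.

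For part (a) in the plain type case, I take a singular tangle $L$ with exactly $n = k+l+1$ double points and expand
\[(uv)^\x(L) = \sum_{\substack{S \cup T = \Delta_L \\ S \cap T = \emptyset}} u^\x(L_{S+})\, v^\x(L_{T-}).\]
The key observation is bookkeeping: since $T = \Delta_L \but S$, the partial resolution $L_{S+}$ has exactly $|T|$ unresolved double points as a singular tangle, and symmetrically $L_{T-}$ has $|S|$ unresolved double points. Because $|S|+|T| = k+l+1$, one of the inequalities $|T| \geq k+1$ or $|S| \geq l+1$ must hold. In the first case $u^\x(L_{S+}) = 0$ since $u$ is of type $k$; in the second, $v^\x(L_{T-}) = 0$ since $v$ is of type $l$. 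Either way the summand vanishes, so $(uv)^\x(L) = 0$. For the colored type case I repeat the argument starting from a singular link map $L$ with $k+l+1$ double points. Since each partial resolution is performed locally in disjoint balls around the chosen double points, the remaining double points of $L_{S+}$ and $L_{T-}$ are exactly the original double points in $T$ and $S$ respectively, each still separating distinct components. Thus $L_{S+}$ and $L_{T-}$ are themselves singular link maps, and the same pigeonhole estimate kills every summand, now using that $u$ and $v$ are of colored types $k$ and $l$.

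For part (b), I assume $L$ has at least $k_i + l_i + 1$ self-intersections of some component $\Theta_i$, and let $D_i \subset \Delta_L$ denote the set of these self-intersections. Splitting $D_i$ as $(S \cap D_i) \sqcup (T \cap D_i)$ in each Leibniz summand, I observe that $L_{S+}$ retains exactly $|T \cap D_i|$ self-intersections of $\Theta_i$ while $L_{T-}$ retains $|S \cap D_i|$. Since $|S \cap D_i| + |T \cap D_i| = |D_i| \geq k_i + l_i + 1$, pigeonhole gives $|T \cap D_i| \geq k_i + 1$ or $|S \cap D_i| \geq l_i + 1$, which respectively annihilate $u^\x(L_{S+})$ or $v^\x(L_{T-})$ by the hypothesis on the multi-indexed types.

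I do not anticipate a genuine obstacle: the whole corollary is an immediate consequence of Lemma \ref{leibniz} once one correctly tracks which double points survive in each partial resolution. The only point requiring any care is the colored case, where I must confirm that partial resolutions preserve the link-map property; this is automatic because resolutions are local operations and do not introduce new intersections between components.
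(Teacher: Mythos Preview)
Your proof is correct and follows essentially the same route the paper points to: the paper does not spell out its own argument but cites \cite{Wi}*{Corollary 3.3} and \cite{CDM}, whose proofs derive the result from a Leibniz rule by exactly the pigeonhole count you give, and the paper remarks that those proofs carry over to the colored and multi-indexed settings. One wording slip: in the colored case you write that the surviving double points are ``each still separating distinct components,'' but you mean the opposite---each remaining double point is still a self-intersection of a single component, so the partial resolutions $L_{S+}$ and $L_{T-}$ are still link maps; your stated conclusion is correct, only the phrase is inverted.
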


The non-colored case is proved in \cite{Gu1}*{Lemma 5.8}, \cite{Wi}*{Corollary 3.3} and \cite{CDM}*{3.2.3} (see also \cite{BN}*{Exercise 3.10}), 
and the proofs in \cite{Wi}, \cite{CDM} work in both colored cases. 

\begin{remark}
Given a {\it coloring} of $\Theta$, that is, a continuous map $c\:\Theta\to\{1,\dots,\chi\}$, one may define 
a $c$-link map as a map which sends distinct point-inverses of $c$ to disjoint sets.
(One may think of $c$ as dipping the components of $\Theta$ into paintpots numbered $1,\dots,\chi$.)
This leads to invariants of $c$-type $n$.
The case of a constant coloring corresponds to type $n$ invariants, and the case of a coloring which is itself 
a link map corresponds to colored type $n$ invariants.
These are the only two cases that are needed in the present paper.
\end{remark}

Theorem \ref{main1} is a consequence of the following

\begin{theorem}\label{2.2}
Let $v$ be a type $n$ invariant, or more generally a type $(n,\dots,n)$ invariant of tangles.
If $v$ is invariant under PL isotopy, then it is invariant under $n$-quasi-isotopy.
\end{theorem}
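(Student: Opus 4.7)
The plan is to prove Theorem~\ref{2.2} by induction on $n$, reducing to the claim that $v^\x(L) = 0$ for every singular tangle $L$ with a single double point $x = f(p) = f(q)$ realized as an $n$-quasi-embedding. This reduction is immediate: at each double-point instant $t_i$ of an $n$-quasi-isotopy, the jump $v(L_{t_i + \epsilon}) - v(L_{t_i - \epsilon})$ equals $\pm v^\x(L_{t_i})$ by relation (\ref{vassiliev-extension}). The containment $\{p, q\} \subset J_n$ forces $x$ to be a self-crossing of a single component $K$ of $\Theta$; this is essential both for the base case and for the type $(n, \dots, n)$ refinement, since all auxiliary double points produced below will also lie in $K$.

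The base case $n = 0$ comes directly from the PL isotopy hypothesis. The resolutions $L_+$ and $L_-$ differ only in the knot type of $K$ (all linking data with other components is preserved), and PL isotopy permits arbitrary modification of the knot type of any individual component by local knot insertions; hence $L_+$ and $L_-$ are PL isotopic and $v(L_+) = v(L_-)$.

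For the inductive step, I would use the outermost null-homotopy witnessing $P_{n-1} \cup f(J_{n-1}) \emb P_n$ to produce, after general position, a PL singular $2$-disk $D \subset P_n$ with $\partial D = f(J_0)$ whose interior meets $f(\Theta)$ transversally in finitely many points, all lying in $f(J_n \but J_0) \subset f(K)$ by the condition $f^{-1}(P_n) \subset J_n$. Sliding the loop $f(J_0)$ through $D$ by a finger move yields a PL homotopy of singular tangles from $L$ to a ``shrunken'' singular tangle $\tilde L$ in which $f(J_0)$ is confined to an arbitrarily small ball around $x$, at the cost of finitely many transient self-crossings of $K$ appearing during the slide. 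Tracking $v^\x$ across this homotopy by the Vassiliev relation yields an identity
\[
v^\x(L) = v^\x(\tilde L) + \sum_j \pm\, v^\x(\hat L_j),
\]
where each $\hat L_j$ has exactly two double points: the original $x$ together with one newly introduced self-crossing $y_j$ of $K$.

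Each correction term is then disposed of by applying the Vassiliev relation at $x$ to write $v^\x(\hat L_j) = v^\x((\hat L_j)_{x+}) - v^\x((\hat L_j)_{x-})$, so that each piece is a singular tangle carrying only the one double point $y_j$. The geometric payoff of the finger move is that the original nested tower $(P_0, \dots, P_{n-1}; J_0, \dots, J_{n-1})$ can be reindexed and recentered at $y_j$ to exhibit each of $(\hat L_j)_{x\pm}$ as an $(n-1)$-quasi-embedding at $y_j$; the inductive hypothesis then gives $v^\x((\hat L_j)_{x\pm}) = 0$. For the main term $\tilde L$, the confinement of $f(J_0)$ to a small ball reduces $\tilde L$ to the setting of the base case, giving $v^\x(\tilde L) = 0$. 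The principal difficulty is the recentering step: one must verify that after resolving $x$ (a global perturbation of the link), the polyhedra and arcs re-indexed from the original tower continue to satisfy the null-homotopic inclusion conditions required to certify an $(n-1)$-quasi-embedding at $y_j$, and that this is compatible with the finer type $(n, \dots, n)$ refinement. This geometric bookkeeping is the main technical hurdle.
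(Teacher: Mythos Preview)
Your proposal has two genuine gaps.

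\textbf{The base case is false as argued.} For $n=0$ you claim $L_+$ and $L_-$ are PL isotopic because they ``differ only in the knot type of $K$''. But a self-crossing change on $K$ alters the isotopy class of $K$ \emph{in the complement of the other components}, not merely its knot type in $S^3$; PL isotopy (ambient isotopy plus local knot insertion) cannot in general undo this. Concretely, the self-crossing that unclasps the Whitehead link takes it to the two-component unlink, and these are not PL isotopic (the generalized Sato--Levine invariant $\alpha_1$ separates them). The conclusion of the base case is nonetheless true, for a trivial reason you overlooked: a type $(0,\dots,0)$ invariant by definition has $v^\x=0$ on every singular tangle with one self-intersection; the PL-isotopy hypothesis plays no role here.

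\textbf{The induction does not close.} Your inductive hypothesis is Theorem~\ref{2.2} for $n-1$, which is a statement about type $(n-1,\dots,n-1)$ invariants. But your $v$ is only of type $(n,\dots,n)$, a strictly weaker condition. So even granting that $(\hat L_j)_{x\pm}$ is an $(n-1)$-quasi-embedding at $y_j$, the inductive hypothesis tells you nothing about $v^\x\big((\hat L_j)_{x\pm}\big)$. The two occurrences of $n$ in the theorem (the type and the quasi-isotopy level) are coupled, and an induction that lowers both simultaneously cannot be applied to a fixed $v$.

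The paper's proof avoids both issues by \emph{not} inducting on $n$. For fixed $n$ and fixed $v$, it builds a tree of homotopies of increasing singular depth: the branches at depth $k$ carry singular tangles with $k$ permanent double points, all with preimages in $J_{k-1}$, and one uses the null-homotopy of $P_{k-1}\cup f(J_{k-1})$ in $P_k$ (the innermost unused layer, not the outermost) to homotope these into the small ball $P_0^+$, producing transient $(k{+}1)$-double-point tangles along the way. The terminal of each branch has all its double points inside a ball whose $L$-preimage is an arc, and the PL-isotopy hypothesis kills it via Lemma~\ref{2.3}; this is the correct, \emph{localized} version of your base-case idea. At depth $n$ the transients have $n+1$ self-intersections of the same component and are killed directly by the type $(n,\dots,n)$ hypothesis. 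Thus PL isotopy and the type condition play separate roles, and all $n$ layers of the tower are consumed in a single argument rather than peeled off inductively.
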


\begin{proof}[Proof. Case $n=0$] The relation of $0$-quasi-isotopy is the same as that of link homotopy.
On the other hand, every type $(0,\dots,0)$ invariant assumes the same values on link homotopic tangles.
This completes the proof of the theorem in the case $n=0$.
\end{proof}

\begin{proof}[Step I (includes the case $n=1$)]
Assume that $n\ge 1$ and that $v$ is invariant under PL isotopy.
It suffices to show that $v^\x$ vanishes on every proper $n$-quasi-embedding $f\:\Theta\to M$ of boundary pattern $\Xi$
which is a singular tangle (i.e.\ its double point is rigid).
Let $x=f(p)=f(q)$ be the double point of $f$.
Let $P_0,\dots,P_n$ and $J_0,\dots,J_n$ be as in the definition of an $n$-quasi-embedding.
We may assume that $\partial J_0=\{p,q\}$.
By passing to small regular neighborhoods we may further assume that $P_1,\dots,P_n$ are 
compact $3$-manifolds (with boundary) and that each $J_i$, $i<n$, lies in the interior of 
an arc $J_i^+$ such that $P_i\cup f(J_i^+)$ lies in $P_{i+1}$ and is null-homotopic in it 
(and also $f^{-1}(P_i)\subset J_i$, as before).
Moreover, it is easy to see that $P_0=\{x\}$ lies in the interior of a PL ball $P_0^+$ which 
meets $f(\overline{\Theta\but J_0})$ in the arc $f(\overline{J_0^+\but J_0})$, lies in $P_1$ and 
is such that the inclusion $P_0^+\cup f(J_0^+)\to P_1$ is null-homotopic.

Since $P_0^+\cup f(J_0)$ is null-homotopic in $P_1$, there exists a homotopy $f_t\:\Theta\to M$ 
keeping $\overline{\Theta\but J_0}$ fixed and keeping $J_0$ within $P_1$, from $f_0=f$ to a map $f_1$ 
sending $J_0$ into the ball $P_0^+$ and such that $f_1(J_0)$ is a small circle bounding a small 
embedded disk in $P_0^+$, whose interior is disjoint from $f(J_0^+\but J_0)$.
Moreover, since $P_0^+$ and $P_1$ are $3$-manifolds, we may assume that $x$ is a rigid 
double point of $f_t$ for each $t\in I$, and apart from this permanent double point, the homotopy $f_t$ 
has only finitely many additional double points that occur at distinct time instants 
$t_1,\dots,t_l\in (0,1)$ and are rigid.
Thus each $f_{t_i}$ is a singular tangle with two double points, and $f_t$ for each 
$t\notin\{t_1,\dots,t_l\}$ is a singular tangle with one double point.

Since the two resolutions $L_+$, $L_-$ of the singular tangle $L:=f_1$ are ambient isotopic, 
every colored finite type invariant, when extended to singular tangles, vanishes on $f_1$.
(This relation is known as ``the one-term relation'' or ``the framing independence relation''
in the theory of finite type invariants, cf.\ \cite{CDM}*{\S4}.)

On the other hand, since $f^{-1}(P_1)$, and hence also each $f_{t_i}^{-1}(P_1)$, lies in the arc $J_1$, 
the two double points of each $f_{t_i}$ are self-intersections of the same component.
Hence every type $(1,\dots,1)$ invariant $u$, when extended to singular tangles, vanishes on each $f_{t_i}$.
Therefore $u^\x(f)=u^\x(f_1)$, but from the one-term relation we also know that $u^\x(f_1)=0$.
This completes the proof of the theorem in the case $n=1$.
\end{proof}

The following lemma can be regarded as a generalization of the one-term relation.

\begin{lemma}\label{2.3} Let $L$ be a singular tangle with $n$ double points, where $n\ge 1$, and suppose 
that all its double points lie in the interior of a PL ball $B$ such that $L^{-1}(B)$ is an arc.
Then for every invariant $u$ of PL isotopy, $u^\x(L)=0$.
\end{lemma}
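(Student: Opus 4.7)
The plan is to expand $u^\x(L)$ via the explicit formula (\ref{vassiliev-extension'}) and then show that all $2^n$ resolutions of $L$ are PL isotopic to one another, which will force the alternating sum to collapse.

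Write $J=L^{-1}(B)$, so $J$ is an arc in $\Theta$ by hypothesis, and all $n$ double points of $L$ lie in $L(J)\subset\mathrm{int}\,B$. By (\ref{vassiliev-extension'}),
\[u^\x(L)=\sum_{\eps\in\{+1,-1\}^n}\eps_1\cdots\eps_n\,u(L_\eps),\]
where each $L_\eps$ is an honest (non-singular) tangle in $M$ of boundary pattern $\Xi$. By construction each $L_\eps$ agrees with $L$ on $\overline{\Theta\but J}$, while $L_\eps|_J$ is a proper PL arc $\alpha_\eps$ in $B$ whose boundary $\alpha_\eps\cap\partial B=L(\partial J)$ is the same as for $L_{\eps'}$.

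The geometric heart of the argument is the claim that any two such tangles $L_\eps$ and $L_{\eps'}$ are PL isotopic. Indeed, any two proper PL arcs in a PL $3$-ball with common endpoints are related by an ambient isotopy of the ball composed with insertion of a local knot on the arc; since $B\subset\mathrm{int}\,M$ and the change takes place entirely on a single component of $\Theta$, this operation lifts to a PL isotopy of tangles of boundary pattern $\Xi$ from $L_\eps$ to $L_{\eps'}$ (the ambient part is supported in $B$, and local-knot insertion is one of the generating moves of PL isotopy, cf.\ \cite{Ro2}*{Theorem 4.2}).

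Since $u$ is a PL-isotopy invariant, the value $c:=u(L_\eps)$ does not depend on $\eps$. Therefore
\[u^\x(L)=c\sum_{\eps\in\{+1,-1\}^n}\eps_1\cdots\eps_n=c\prod_{i=1}^n\bigl(1+(-1)\bigr)=0,\]
using $n\ge 1$. The only non-formal ingredient is the standard fact that proper arcs in a $3$-ball with fixed endpoints are unique up to ambient isotopy and local knotting; this is where the hypothesis that $L^{-1}(B)$ is a single arc (rather than a more complicated $1$-manifold) is essential, and it is the step one must be most careful about if one wishes to generalize the lemma.
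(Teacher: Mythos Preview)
Your proof is correct and rests on the same geometric fact as the paper's: any two full resolutions $L_\eps$ and $L_{\eps'}$ agree outside $B$ and differ only by the knot type of a proper arc in the ball, hence are PL isotopic. The paper packages this as an induction on $n$ (resolving one double point at a time and invoking the $n-1$ case on each of $L_+$, $L_-$), whereas you expand fully via formula~(\ref{vassiliev-extension'}) and observe that the alternating sum of a constant over $\{+1,-1\}^n$ vanishes; both arguments are equally short and use the same key step.
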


\begin{proof} Pick any double point of $L$ and consider the two resolutions $L_+$ and $L_-$ of $L$
at this double point.

If $n=1$, then $L_+$ and $L_-$ are non-singular tangles which meet the ball $B$ in possibly distinct local knots 
and agree outside $B$.
Hence they are PL isotopic, and therefore $u^\x(L)=u(L_+)-u(L_-)=0$.

If $n>1$, then, arguing by induction, we may assume that the lemma is established for all smaller values of $n$.
Then $u^\x(L_+)=u^\x(L_-)=0$ by the induction hypothesis, and hence $u^\x(L)=0$.
\end{proof}

\begin{proof}[Step II (includes the case $n=2$)] 
To continue the proof of the theorem for $n\ge 2$, we fix some $i\in\{1,\dots,l\}$
and study the jump of $v^\x$ on the singular tangle $f_{t_i}$.

Since $f^{-1}(P_1)$ lies in $J_1$, so does $f_{t_i}^{-1}(P_1)$, and consequently
both double points of $f_{t_i}$ lie in the graph $G:=f_{t_i}(J_1)$.
Let $g$ be an embedding of the graph $G^+:=f_{t_i}(J_1^+)$ into $P_0^+\cup f(J_1^+\but J_0)$
which restricts to an embedding of $G$ into the ball $P_0^+$, to the identity on 
the two points $f_{t_i}(\partial J_1^+)=f(\partial J_1^+)$ and to a homeomorphism between 
the two arcs $f_{t_i}(\overline{J_1^+\but J_1})$ and the two arcs $f(\overline{J_1^+\but J_0^+})$.
Since $G^+$ and $g(G^+)$ both lie in $P_1\cup f(J_1^+)$, which is in turn null-homotopic in $P_2$,
the inclusion $G^+\to P_2$ is homotopic to $g$ by a homotopy $g_t\:G^+\to P_2$ 
keeping $f(\partial J_1^+)$ fixed.%
\footnote{Indeed, if $X\supset A\subset Y$, the inclusion $A\to Y$ is null-homotopic and the inclusion $A\to X$ 
is a cofibration, then it is easy to see that the inclusion $A\to Y$ extends to a map $X\to Y$.}
The composition $J_1^+\xr{f_{t_i}}f_{t_i}(J_1^+)=G^+\xr{g_t}P_2$ extends by the identity 
on $\overline{\Theta\but J_1^+}$ to a homotopy $f_{t_i;\,t}\:\Theta\to M$ from $f_{t_i;\,0}=f_{t_i}$ to a map $f_{t_i;\,1}$ 
which has precisely two double points, both contained in $P_0^+$, and is such that $f_{t_i;\,1}^{-1}(P_0^+)=J_1$.
Moreover, since $P_0^+$ and $P_2$ are $3$-manifolds, we may assume that the two double points of $f_{t_i}$ 
continue as rigid double points of $f_{t_i;\,t}$ for each $t\in I$, and apart from these two permanent double points, 
the homotopy $f_{t_i;\,t}$ has only finitely many additional double points that occur at distinct time instants 
$t_{i1},\dots,t_{ik_i}\in (0,1)$ and are rigid.
Thus each $f_{t_i;\,t_{ij}}$ is a singular tangle with three double points and $f_{t_i;\,t}$ for each 
$t\notin\{t_{i1},\dots,t_{ik_i}\}$ is a singular tangle with two double points.

Since both double points of $f_{t_i;\,1}$ are contained in $P_0^+$ and $f_{t_i;\,1}^{-1}(P_0^+)$ is the arc $J_1$,
by Lemma \ref{2.3} every colored finite type invariant, when extended to singular tangles, vanishes on $f_{t_i;\,1}$.

On the other hand, since $f^{-1}(P_2)$, and hence also each $f_{t_i;\,t_{ij}}^{-1}(P_2)$, lies in the arc $J_2$, 
the three double points of each $f_{t_i;\,t_{ij}}$ are self-intersections of the same component.
Hence every type $(2,\dots,2)$ invariant $u$, when extended to singular tangles, vanishes on each $f_{t_i;\,t_{ij}}$.
Therefore $u^\x(f_{t_i})=u^\x(f_{t_i;\,1})$, but from Lemma \ref{2.3} we also know that $u^\x(f_{t_i;\,1})=0$.
Thus $u^\x$ vanishes on each $f_{t_i}$.
Consequently $u^\x(f)=u^\x(f_1)$, but from the one-term relation we also know that $u^\x(f_1)=0$.
This completes the proof of the theorem in the case $n=2$.
\end{proof}

The proof of the general case is similar to the case $n=2$ (and if the reader feels that reading the proof of the case $n=2$ 
was enough for her to understand the proof of the general case, she is probably right), but for completeness we include the details.

\begin{proof}[Step III (Case $n\ge 2$)]
Suppose that for each $k=1,\dots,n-1$ we have constructed a collection $C_k$ of homotopies $f_{s_1,\dots,s_{k-1};\,t}$ 
which go through singular tangles with $k$ double points, apart from finitely many critical levels, which are singular
tangles with $k+1$ double points, so that the following conditions hold.
\begin{enumerate}
\item $f_0=f$, and for every $k\in\{1,\dots,n-2\}$ a singular tangle is the initial level $f_{s_1,\dots,s_k;\,0}$
of a homotopy of the form $f_{s_1,\dots,s_k;\,t}\in C_{k+1}$ if and only if it is a critical
level, corresponding to $t=s_k$, of a homotopy of the form $f_{s_1,\dots,s_{k-1};\,t}\in C_k$. 
\item The terminal level $f_{s_1,\dots,s_k;\,1}$ of every homotopy $f_{s_1,\dots,s_k;\,t}\in C_{k+1}$
has all its double points in the ball $P_0^+$ and satisfies $f_{s_1,\dots,s_k;\,1}^{-1}(P_0^+)=J_k$.
\item Every level of every homotopy $f_{s_1,\dots,s_{k-1};\,t}\in C_k$ agrees with $f$ outside $J_{k-1}^+$, 
and sends $J_{k-1}^+$ into $P_k$.
\end{enumerate}
Let us show that the given collections $C_1,\dots,C_{n-1}$ can be extended by a collection $C_n$ so that
the collections $C_1,\dots,C_n$ satisfy the same conditions with $n$ replaced by $n+1$.
 
Suppose that $s_{n-1}$ is a critical time instant of a homotopy $f_{s_1,\dots,s_{n-2};\,t}\in C_{n-1}$.
Let $S=(s_1,\dots,s_{n-1})$.
Thus $f_S$ is a singular tangle with $n$ double points, which agrees with $f$ outside $J_{n-2}^+$ and 
sends $J_{n-2}^+$ into $P_{n-1}$.
Since $f^{-1}(P_{n-1})$ lies in $J_{n-1}$, so does $f_S^{-1}(P_{n-1})$, and consequently
all double points of $f_S$ lie in the graph $G:=f_S(J_{n-1})$.
Let $g$ be an embedding of the graph $G^+:=f_S(J_{n-1}^+)$ into $P_0^+\cup f(\overline{J_{n-1}^+\but J_0})$
which restricts to an embedding of $G$ into the ball $P_0^+$, to the identity on 
the two points $f_S(\partial J_{n-1}^+)=f(\partial J_{n-1}^+)$ and to a homeomorphism between 
the two arcs $f_S(\overline{J_{n-1}^+\but J_{n-1}})$ and the two arcs $f(\overline{J_{n-1}^+\but J_0^+})$.
Since $G^+$ and $g(G^+)$ both lie in $P_{n-1}\cup f(J_{n-1}^+)$, which is in turn null-homotopic in $P_n$,
the inclusion $G^+\to P_n$ is homotopic to $g$ by a homotopy $g_t\:G^+\to P_n$ keeping $f(\partial J_{n-1}^+)$ fixed.
The composition $J_{n-1}^+\xr{f_S}f_S(J_{n-1}^+)=G^+\xr{g_t}P_n$ extends by the identity 
on $\overline{\Theta\but J_{n-1}^+}$ to a homotopy $f_{S;\,t}\:\Theta\to M$ from $f_{S;\,0}=f_S$ to a map $f_{S;\,1}$ 
which has precisely $n$ double points, all contained in $P_0^+$, and is such that $f_{S;\,1}^{-1}(P_0^+)=J_{n-1}$.
Moreover, since $P_0^+$ and $P_n$ are $3$-manifolds, we may assume that the $n$ double points of $f_S$ 
continue as rigid double points of each $f_{S;\,t}$, and apart from these $n$ permanent double points, the homotopy 
$f_{S;\,t}$ has only finitely many additional double points that occur at distinct time instances 
$u_1,\dots,u_k\in (0,1)$ and are rigid.
Thus each $f_{S;\,u_j}$ is a singular tangle with $n+1$ double points and $f_{S;\,t}$ for each 
$t\notin\{u_1,\dots,u_k\}$ is a singular tangle with $n$ double points.
This completes the construction of the desired collection $C_n$.

Now that the collections $C_1,\dots,C_n$ have been constructed, we are ready to evaluate the given invariant $v$
of type $(n,\dots,n)$, which is well-defined up to PL isotopy, on the $n$-quasi-embedding $f$.
For each $k=1,\dots,n$ and each homotopy $f_{S;\,t}\in C_k$ we have $v^\x(f_{S;\,1})=0$ by condition (2)
and Lemma \ref{2.3}.
Since $v$ is a type $(n,\dots,n)$ invariant, and each critical level $f_{S;\,u_j}$ of each homotopy $f_{S;\,t}\in C_n$
has $n+1$ double points, all of them being self-intersections of the same component of $\Theta$ (indeed, they lie
in $f_{S;\,u_j}(J_n)$ by condition (3)), we also get that $v^\x(f_{S;\,0})=v^\x(f_{S;\,1})$.
Thus $v^\x(f_{S;\,0})=0$ for each $f_{S;\,t}\in C_n$.
Hence by condition (1) we obtain that $v^\x$ vanishes on each critical level of each homotopy in $C_{n-1}$.
Proceeding in the same fashion, we eventually obtain that $v(f)=0$.
\end{proof}

The proof of Theorem \ref{2.2} works to prove the following

\begin{theorem}\label{2.2'} Let $v$ be a type $(k_1,\dots,k_m)$ invariant of tangles.
If $v$ is invariant under PL isotopy, then it is invariant under $(k_1,\dots,k_m)$-quasi-isotopy.
\end{theorem}

Theorems \ref{isotopy'} and \ref{2.2'} imply the following strengthening of Corollary \ref{main1''},
which is applied in \cite{M24-2} and \cite{M24-3}.

\begin{corollary} \label{perturbation}
For $i=1,2,\dots$ let $v_i$ be a type $(k_{i,1},\dots,k_{i,m})$ invariant of links which is well-defined up to PL isotopy.
Then each $v_i$ assumes the same value on all sufficiently close $C^0$-approximations of any given topological link
and the extension $\bar v_i$ of $v_i$ by continuity to topological links is an invariant of isotopy 
(which of course assumes the same value on all sufficiently close $C^0$-approximations of any given topological link).

Moreover, if $k_{1,j}=k_{2,j}=\dots$ for some $j$, then the entire collection $(\bar v_1,\bar v_2,\dots)$ is invariant 
under sufficiently small $C^0$-perturbation of the $j^{\text{th}}$ component. 
That is, for every topological link $\L$ there exists an $\epsilon>0$ such that for every topological link $\L'$ 
such that $\L'|_{S^1_j}$ is $\epsilon$-close to $\L|_{S^1_j}$ and $\L'|_{S^1_i}=\L|_{S^1_i}$ for $i\ne j$ we have 
$\bar v_i(\L)=\bar v_i(\L')$ for all $i$.
\end{corollary}

\begin{remark} The proof of Theorem \ref{2.2} can be modified so as to work under a weaker hypothesis, with $n$-quasi-isotopy
replaced by {\it virtual $n$-quasi-isotopy}, which is defined in \cite{MR1}*{end of \S3}.
The modified construction is geometrically more intuitive, but its accurate description would be longer and more complicated.
We give only a rough sketch of the modified construction.
The homotopy $f_t$ is the same as before.
Using the notation from \cite{MR1}*{proof of Lemma 3.1}, each homotopy $f_{t_j,t}$ can be visualized as shifting the arc
$f(I'_j)$ onto the arc $F(I_j)$ and then taking the image of $J_1$ into a ball along the track of the null-homotopy $F$.
Thus $f_{t_j,1}$ still satisfies the hypothesis of Lemma \ref{2.3}, but, unless $I'_j\subset J_0$, the image of $f_{t_j,1}$ 
is now different: instead of being the same as the image of $f$ outside the ball $P_0^+$, it now coincides with the image 
of $f$ only outside a small neighborhood of the arc $f(I'_j)$.
\end{remark}

\begin{remark} The proof of Theorem \ref{2.2}, as well as its modification sketched in the previous remark, can be seen
to yield a stronger conclusion: if tangles $L$ and $L'$ are $n$-quasi-isotopic (or just virtually $n$-quasi-isotopic), then
they are not just indistinguishable by type $(n,\dots,n)$ invariants that are well-defined up to PL isotopy, but also
{\it geometrically $(n,\dots,n)$-equivalent} in the following sense.
Let $\LM$ denote the space of all singular tangles which are link maps.
(The topology is the restriction of the $C^0$ topology on the space of maps, modified so as to ensure that double points
remain rigid under homotopies given by paths in this space.)
Let $\LM_n$ denote the subset of $\LM$ consisting of singular tangles with precisely $n$ double points, and let
$\LM_{(n,\dots,n)}$ denote the subset of $\LM_n$ consisting of singular tangles whose all double points occur on the same component.
On the other hand, let $\LM^\circ$ denote the subset of $\LM$ consisting of those singular tangles $L$ whose all double points lie in 
a ball $B$ such that $L^{-1}(B)$ is an arc.

We call two elements of $\LM_n$ {\it geometrically $k$-equivalent} if they are homotopic within the space $\LM_n\cup\LM_{n+1,k}$, 
where $\LM_{i,0}=\LM_i$ and $\LM_{i,k}$ for $k>0$ consists of those elements of $\LM_i$ that are geometrically 
$(k-1)$-equivalent to an element of $\LM_i\cap\LM^\circ$.
Two elements of $\LM_{(n,\dots,n)}$ are {\it geometrically $(k,\dots,k)$-equivalent} if the same holds with $\LM_{(i,\dots,i)}$ 
in place of $\LM_i$.

Let us start unwrapping these definitions; all double points will be assumed to be self-intersections of components.
\begin{itemize}
\item[0/0] Two tangles (=elements of $\LM_0$) are geometrically $0$-equivalent iff they are homotopic within $\LM_0\cup\LM_1$ --- that is, link homotopic.
\item[1/0] Two singular tangles with precisely $1$ double point each (=elements of $\LM_1$) are geometrically $0$-equivalent iff they are homotopic 
within $\LM_1\cup\LM_2$ --- that is, link homotopic keeping the double point rigid.
\item[0/1] Two tangles (=elements of $\LM_0$) are geometrically $1$-equivalent iff they are homotopic within $\LM_0\cup\LM_{1,1}$, where $\LM_{1,1}$ 
consists of those singular tangles with precisely $1$ double point that can be link homotoped keeping the double point rigid until the double point
gets inside a ball whose preimage is an arc.
\item[(1,1)/0] Two singular tangles with precisely $1$ double point each (=elements of $\LM_{(1,1)}$) are geometrically $0$-equivalent iff they 
are homotopic within $\LM_{(1,1)}\cup\LM_{(2,2)}$ --- that is, link homotopic keeping the double point rigid and without self-intersecting 
the non-singular component.
\end{itemize}

It is shown in the proof of Theorem \ref{2.2} that if tangles $L$ and $L'$ are geometrically $(k,\dots,k)$-equivalent, then they are not
separated by type $(k,\dots,k)$ invariants that are well-defined up to PL isotopy.
The same argument works to show that if $L$ and $L'$ are geometrically $k$-equivalent, then they are not separated by colored type 
$k$ invariants that are well-defined up to PL isotopy.
\end{remark}

\begin{example} \label{weak}
(a) It is easy to see that geometric $1$-equivalence implies weak $1$-quasi-isotopy.%
\footnote{Weak $n$-quasi-isotopy is defined similarly to $n$-quasi-isotopy but with ``null-homotopic'' replaced by
``induces zero homomorphisms on reduced integral homology'' (cf.\ \cite{MR1}, \cite{MR2}, see also \cite{MR1}*{Figure 2(d)}).}
For $2$-component links the converse also holds (see \cite{Mi1}*{Figure 2}).

(b) It is shown in \cite{KL} that $2$-component links are not separated by colored type $1$ invariants if and only if they have 
the same linking number and the same generalized Sato--Levine invariant (see Example \ref{conway-ex} concerning the latter).
However, it seems highly unlikely that these two invariants constitute a complete set of invariants of weak $1$-quasi-isotopy
(see, in particular, \cite{MR1}*{Problem 1.5}, which also makes sense for weak $1$-quasi-isotopy in place of $1$-quasi-isotopy).
In fact they constitute a complete set of invariants of $\frac12$-quasi-isotopy (see Example \ref{conway-ex}). 

(c) Geometric $(1,\dots,1)$-equivalence is clearly the same thing as $1$-quasi-isotopy.
\end{example}

\begin{example} \label{milnor-double}
Let $M_k=(K,Q)$ be the $k$th Milnor link (see Figure \ref{milnor}), where $Q$ is the ``long'' component,
and let $M^W_k$, $k\ge 2$, be a Whitehead double of $M_k$ along $Q$.
Thus $M^W_k=(K,Q')$, where $Q'$ is the image of the Whitehead curve in $S^1\x D^2$ under some homeomorphism of $S^1\x D^2$ 
with a regular neighborhood of $Q$.
Unclasping the clasp of $Q'$ yields a $1$-quasi-isotopy $h_t$ from $M^W_k$ to the unlink.
It is not a $2$-quasi-isotopy (not even a weak $2$-quasi-isotopy), since $M_k$ is not a boundary link, as detected by Cochran's 
derived invariants (see \cite{MR2}*{\S3 and \S1.2}).
It is natural to conjecture that $M^W_k$ is not $2$-quasi-isotopic, nor even weakly $2$-quasi-isotopic to the unlink.
However, it is not hard to see that $h_t$ yields a geometric $(k,\dots,k)$-equivalence between $M^W_k$ and the unlink.
\end{example}

\section{Locally additive invariants and $C_n$-equivalence} \label{cn-equivalence}

A tangle is called {\it totally split} if its components are contained in pairwise disjoint $3$-balls.
If $K$ and $L$ are $m$-component links and $K$ is totally split, then the (componentwise) connected sum
$L\#K$ is easily seen to be well-defined.
Also, for any string links $L$ and $L'$ their connected sum $L\# L'$ is well defined.

We call an invariant $v$ of $m$-component (string) links {\it locally additive} if $v(L\# K)=v(L)+v(K)$ 
whenever $K$ is totally split.
This implies in particular that $v$ vanishes on the trivial (string) link.

For every link $L$ in $S^3$ (respectively, for every string link) there is a unique, 
up to ambient isotopy, totally split link in $S^3$ (respectively, totally split string link) $K_L$ whose 
components are ambient isotopic to the respective components of $L$.

\begin{lemma} \label{LA-lemma0}
Let $A$ and $B$ be $m$-component links in $S^3$ (respectively, string links).
Then $A$ and $B$ are not separated by type $n$ invariants that are well-defined up to PL isotopy
if and only if $A$ and $B$ are PL isotopic to (string) links that are not separated by locally additive type $n$ invariants.
\end{lemma}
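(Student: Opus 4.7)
The plan is to handle the two directions separately. For ``$\Leftarrow$'', given any PL-isotopy invariant $v$ of type $n$, I would pass to the normalized invariant $\hat v(L) := v(L) - v(U)$, where $U$ is the (string) unlink. Since every totally split (string) link $K$ is PL isotopic to $U$, $\hat v$ vanishes on all totally split links; combined with PL-isotopy invariance, which absorbs any totally split connect summand, this gives $\hat v(L \# K) = \hat v(L) = \hat v(L) + \hat v(K)$, so $\hat v$ is locally additive. Applying the hypothesis to $\hat v$ yields $v(A) = v(A') = v(B') = v(B)$.

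For ``$\Rightarrow$'' the strategy is to manufacture, from any locally additive type $n$ invariant $v$, a PL-isotopy-invariant companion $w$. Let $K_L$ denote the totally split (string) link whose components are ambient isotopic to those of $L$, and set $u(L) := v(K_L)$ and $w := v - u$. Local additivity rewrites $u(L) = \sum_{i=1}^m v_i(L_i)$, where $v_i$ is obtained by evaluating $v$ on the totally split link whose $i$-th component is the given knot and whose other components are unknotted. Lemma~\ref{knot-link}(b) guarantees each $v_i$ is a type $n$ knot invariant, and Lemma~\ref{knot-link}(a) then makes $L \mapsto v_i(L_i)$ a type $n$ link invariant, so $u$, and hence $w$, is of type $n$.

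The next step is to verify that $w$ is PL-isotopy invariant, using the fact that PL isotopy is generated by ambient isotopy and insertion of local knots (\cite{Ro2}*{Theorem 4.2}). Ambient isotopy invariance is obvious. Under $L \mapsto L \# K_0$ with $K_0$ totally split, local additivity gives $v(L \# K_0) = v(L) + v(K_0)$, while $K_{L \# K_0} = K_L \# K_0$ (up to ambient isotopy, since the componentwise connected sum of two totally split links along disjoint arcs is again totally split) together with local additivity gives the matching identity $u(L \# K_0) = u(L) + v(K_0)$. The two copies of $v(K_0)$ cancel in $w$. The hypothesis then yields $w(A) = w(B)$, that is, $v(A) - v(B) = v(K_A) - v(K_B)$.

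The closing move, which is the conceptual core of the argument, is the choice of PL-isotopic representatives: set $A' := A \# K_B$ and $B' := B \# K_A$. Both are PL isotopic to their originals (a totally split summand can be inserted by local knot insertions on each component), and crucially each $i$-th component of both $A'$ and $B'$ is the connected sum $A_i \# B_i$, so $K_{A'}$ and $K_{B'}$ are ambient isotopic. Local additivity then gives $v(A') - v(B') = v(A) + v(K_B) - v(B) - v(K_A) = w(A) - w(B) = 0$ for every locally additive type $n$ invariant $v$. The main obstacle I anticipate is the PL-isotopy invariance of $w$: this is what forces the specific form $w = v - v(K_\bullet)$ and depends essentially on local additivity combined with the identity $K_{L \# K_0} = K_L \# K_0$.
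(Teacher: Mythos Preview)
Your proof is correct and follows essentially the same route as the paper: for the ``only if'' direction both you and the paper set $A'=A\#K_B$, $B'=B\#K_A$ and use the companion invariant $w(L)=v(L)-v(K_L)$ (the paper's $\bar v$), verifying via Lemma~\ref{knot-link} that it is of type $n$ and PL-isotopy invariant. Your ``if'' direction is in fact slightly more careful than the paper's one-line claim that PL-isotopy invariance implies local additivity, since you correctly normalize by subtracting $v(U)$ first (local additivity forces the value on the unlink to vanish, which a PL-isotopy invariant need not do).
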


\begin{proof} {\it ``If''.} It suffices to observe that if an invariant is well-defined up to PL isotopy, then it is locally additive.

{\it ``Only if''.} Clearly, $A':=A\#K_B$ and $B':=B\#K_A$ are PL isotopic to $A$ and $B$ (respectively).
Let $v$ be a locally additive type $n$ invariant of $m$-component (string) links.
Then $\bar v$, defined by $\bar v(L)=v(L)-v(K_L)$, is a type $n$ invariant which is well-defined up to PL isotopy.
(Let us note that $v(K_L)=(v^\kappa_1)^\lambda_1(L)+\dots+(v^\kappa_m)^\lambda_m(L)$ in the notation of Lemma \ref{knot-link}.)
We have $\bar v(A')=\bar v(A)=\bar v(B)=\bar v(B')$.
Since $K_{A'}=K_{B'}$, we obtain that $v(A')=\bar v(A')+v(K_{A'})=\bar v(B')+v(K_{B'})=v(B')$.
\end{proof}

\begin{theorem} \label{locally-additive2}
If two $m$-component string links are separated by invariants of type $n$, then they are separated by locally additive 
invariants of type $r$ for some $r=r(m,n)$.
\end{theorem}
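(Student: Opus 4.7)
The plan is to exploit a graded Hopf algebra structure on the space of rational finite type invariants of $m$-component string links, and then invoke the Milnor--Moore theorem to decompose any type-$n$ invariant as a polynomial in locally additive invariants of bounded type.

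First, I would endow the filtered space $\mathcal{V} := \bigcup_n \mathcal{V}_n$ of $\Q$-valued finite type invariants of $m$-component string links with an algebra structure given by pointwise product, which preserves the type filtration by Corollary \ref{product}(a). The comultiplication is defined by $\Delta(v)(L_1, L_2) := v(L_1 \# L_2)$, where $L_2$ is constrained to be totally split. I would use Habiro's clasper calculus to verify that, upon passing to the associated graded $\mathrm{gr}\,\mathcal{V} := \bigoplus_n \mathcal{V}_n/\mathcal{V}_{n-1}$, the comultiplication $\Delta$ descends, is coassociative and cocommutative, and together with the multiplication makes $\mathrm{gr}\,\mathcal{V}$ into a graded connected Hopf algebra over $\Q$ that is both commutative and cocommutative.

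Second, I would identify the primitive elements of $\mathrm{gr}\,\mathcal{V}$ with (the associated graded images of) locally additive invariants: the condition $\Delta(v) = v \otimes 1 + 1 \otimes v$ translates directly into $v(L_1 \# L_2) = v(L_1) + v(L_2)$ whenever $L_2$ is totally split, i.e., into local additivity of $v$.

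Third, I would apply Milnor--Moore: over $\Q$, every graded connected commutative and cocommutative Hopf algebra is a polynomial algebra on its primitives. Hence every $\Q$-valued type-$n$ invariant is a polynomial in locally additive invariants of type $\le n$. Given an arbitrary abelian-group-valued type-$n$ invariant $v$ with $v(A) \ne v(B)$, I would compose $v$ with a suitable homomorphism from its coefficient group $G$ to $\Q$ or to $\Z/p$ to reduce to rational or mod-$p$ coefficients. In the rational case, writing $v = P(p_1, \ldots, p_k)$ with $p_i$ locally additive of type $\le n$, the inequality $v(A) \ne v(B)$ forces some $p_i(A) \ne p_i(B)$, so that $p_i$ is a locally additive invariant of type $\le n$ separating $A$ from $B$.

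The main obstacle is twofold: (i) establishing the Hopf algebra structure rigorously via clasper calculus, in particular verifying that $\Delta$ respects the type filtration on the associated graded and that primitives coincide with locally additive invariants in the necessary quantitative sense; and (ii) handling the $\mathbb{F}_p$ case, where in positive characteristic the Hopf algebra acquires divided powers and the polynomial decomposition is no longer available in the simplest form, so that the bound $r$ may need to be taken strictly larger than $n$. A careful clasper-level refinement of the Milnor--Moore argument --- extracting a connected tree clasper that witnesses the difference between $A$ and $B$ --- should yield a uniform bound $r = r(m, n)$ covering both the rational and mod-$p$ cases.
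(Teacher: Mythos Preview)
Your approach diverges substantially from the paper's, and contains a genuine gap at its foundation.

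The object you call a Hopf algebra is not one. Your comultiplication $\Delta(v)(L_1,L_2)=v(L_1\#L_2)$ is defined only for $L_2$ totally split, so $\Delta$ lands in $\mathcal{V}\otimes\mathcal{W}$, where $\mathcal{W}$ is the space of invariants of totally split string links (equivalently, $m$-fold tensor product of knot invariants), not in $\mathcal{V}\otimes\mathcal{V}$. What you have is a \emph{comodule algebra} structure of $\mathcal{V}$ over the genuine Hopf algebra $\mathcal{W}$, and for such a structure there is no Milnor--Moore theorem identifying ``primitives'' with generators. (If instead you allow $L_2$ to be an arbitrary string link, you do get a bialgebra, but the monoid $\mathcal{L}$ of string links is non-commutative, the bialgebra is not cocommutative, Milnor--Moore does not apply, and the primitives would in any case be the \emph{additive} invariants, not the locally additive ones.) Your assertion that $\mathrm{gr}\,\mathcal{V}$ becomes a commutative cocommutative Hopf algebra via this $\Delta$ is therefore not established, and I do not see how clasper calculus would supply the missing coassociativity on $\mathcal{V}\otimes\mathcal{V}$.

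Even granting the rational case, the passage to arbitrary coefficients is not handled: you acknowledge that over $\mathbb{F}_p$ the polynomial decomposition fails and that $r$ may exceed $n$, but then defer to an unspecified ``clasper-level refinement''. This is precisely the hard part of the theorem, and it is where the explicit dependence $r=r(m,n)$ comes from.

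The paper's route is entirely different and avoids both problems. It combines Proposition~\ref{cn-eq} with Theorem~\ref{locally-additive}, whose proof is the real content: one works in the group $G_n$ of $C_n$-equivalence classes (finitely generated nilpotent, by Habiro), exploits the direct-product splitting $G_n=Q_n\times H_n$ where $H_n$ is the central image of totally split string links, computes the quotient ring $\Z G_n/(\Delta_n\Lambda_n)$ explicitly, and then applies the dimension-subgroup lemma (Lemma~\ref{dim-subgroup}) together with Habiro's inclusion $V_{q(n-1)}\subset\Delta^q+W_n$ to produce the bound $r=q(n-1)-1$. This argument works uniformly for $\Z$-valued invariants, with no separate treatment of torsion coefficients.
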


Theorem \ref{locally-additive2} will be proved in the present section by building on Habiro's proof 
\cite{Hab}*{Theorem 6.18} of T. Stanford's theorem \cite{Sta2}*{Theorem 2.43} (cf.\ \cite{Hab}*{Remark 3.19})
that two knots are separated by type $n$ invariants if and only if they are separated by additive type $n$ invariants.
(The rational version of this theorem was originally proved by Gusarov \cite{Gu1}*{Theorem 5.2}; see also
\cite{TaY} for an alternative proof.)
Habiro's proof is in turn based on the study of $C_n$-equivalence. 

\begin{figure}[h]
\includegraphics[width=\linewidth]{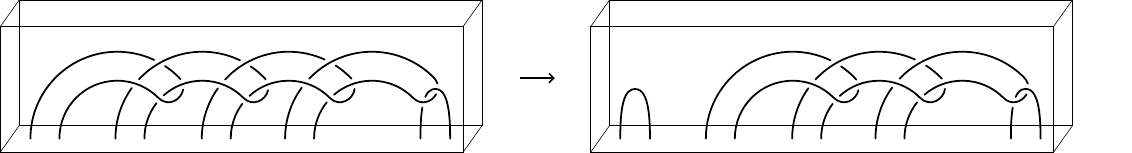}
\caption{The $C_n$-move for $n=4$.}
\label{cn-move}
\end{figure}

Two tangles $L,L\:\Theta\to M$ are said to be related by a {\it $C_n$-move} if they agree outside a $3$-ball $B$, and intersect
$B$ along the $(n+1)$-strand tangles $T,T'\:(n+1)I\to I^3$, which are shown in Figure \ref{cn-move} in the case $n=4$ 
and are similarly drawn for arbitrary $n\ge 1$.
If all the $n+1$ strands of $T$ belong to the same component of $L$, then the $C_n$-move is said to be a {\it self $C_n$-move}.
$L$ and $L'$ are said to be {\it (self) $C_n$-equivalent} if they are related by a sequence of (self) $C_n$-moves and ambient isotopies.

\begin{remark}\label{Brunnian} It is easy to see that the $C_n$-move is ``Brunnian'' in the sense that it becomes trivial (i.e.\ is realized 
by an ambient isotopy) if for any $i\in\{1,\dots,n+1\}$, the $i$th strand is erased from the picture (both on the left and on the right).
\end{remark}

\begin{remark} \label{cn-definition}
The literature contains at least four different definitions of a $C_n$-move, all leading to the same notion of $C_n$-equivalence:
\begin{enumerate}
\item Habiro's second definition \cite{Hab}*{\S3.2 (see also \S7)}): a ``$C_n$-move'' on $L$ is the operation of taking a band 
connected sum of $L$ with an arbitrary $n$-tuply iterated untwisted Bing doubling $\Lambda$ of the Hopf link,%
\footnote{An $n$-tuply iterated untwisted Bing doubling of a $2$-component link is defined in \cite{Co3}*{\S7.4}.
It is determined by a rooted unitrivalent tree with $n$ trivalent vertices.
The two (non-iterated) Bing doublings of the Hopf link are easily seen to be ambient isotopic to the Borromean rings and
in particular to each other.
This implies that an iterated untwisted Bing doubling of the Hopf link depends only on the tree and not on its root.}
along bands which intersect a ball containing $\Lambda$ in a specified way.
\item The operation of taking a band connected sum of $L$ with Milnor's $(n+1)$-component Brunnian link $M_{n+1}$, along bands which 
intersect a ball containing $M_n$ in a specified way.
This is a special case of Habiro's second definition and is called ``band sum with one-branched $C_n$-chord'' in \cite{MiY}.
This definition is used for instance in \cite{CDM}.
\item Habiro's original definition (from his unpublished master's thesis, written in Japanese): a ``$C_n$-move'' is an arbitrary 
$n$-tuply iterated Bing doubling of a crossing change%
\footnote{The two (non-iterated) Bing doublings of the crossing change are easily seen to be equivalent to the $\Delta$-move and
in particular to each other.
This implies that an iterated Bing doubling of the crossing change is fully determined by a unitrivalent tree and does not
depend on its root.}
(for the details see the introduction of \cite{MiY}).
\item $C_n$-move in the sense of Figure \ref{cn-move}.
This is a special case of Habiro's original definition and is called a ``one-branched $C_n$-move'' in \cite{MiY} and \cite{TaY}.
(But in later papers it is often called just a ``$C_n$-move''.)
\end{enumerate}
A proof that (3) and (4) lead to the same notion of $C_n$-equivalence can be found in \cite{TaY}*{Lemma 2.2}.
For a proof that (1) and (3) lead to the same notion of $C_n$-equivalence see \cite{TaY}*{Lemma 3.6}.
As noted in \cite{MiY}*{Lemma 2.1}, the latter argument also shows that (2) and (4) lead to the same 
notion of $C_n$-equivalence.
Moreover, as noted in \cite{MiY}*{Lemma 2.1} and in \cite{TaY}*{third line of the proof of Lemma 2.2},
the arguments in \cite{TaY}*{proofs of Lemmas 2.2 and 3.6} preserve the set of components of $L$ that are involved 
in the moves, so in particular they apply to self $C_n$-moves in place of $C_n$-moves.

It should also be noted that Gusarov's definition of an $n$-variation \cite{Gu2} is close to the definition (1) of 
a $C_n$-move.
Gusarov's previous definition of an $n$-equivalence \cite{Gu1}, which developed Ohyama's notion of $n$-triviality,
is not directly related to either of the definitions (1)--(4).
\end{remark}

\begin{remark} It is well-known that a $C_{n+1}$-move is realized by a sequence of $C_n$-moves (see \cite{CDM}*{\S14.2.1}).
\end{remark}

\begin{proposition}[Gusarov \cite{Gu2}*{10.3}, Habiro \cite{Hab}*{6.8}] \label{cn-eq} 
If two tangles are $C_{n+1}$-equivalent, then they are not separated by invariants of type $n$.
\end{proposition}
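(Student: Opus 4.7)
Since $C_{n+1}$-equivalence is generated by single $C_{n+1}$-moves and ambient isotopies, and every invariant is by definition constant on ambient-isotopy classes, it suffices to show: if tangles $L$ and $L'$ differ by a single $C_{n+1}$-move, then $v(L) = v(L')$ for every type $n$ invariant $v$. My plan is to exhibit a singular tangle $\Sigma$ with exactly $n+1$ double points such that
\[
v(L) - v(L') = \pm\, v^\times(\Sigma).
\]
Granting this, $v^\times(\Sigma) = 0$ by the definition of type $n$, and we are done.

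The candidate $\Sigma$ is to be read off from Figure~\ref{cn-move}. Inside the ball where the $C_{n+1}$-move takes place, there are $n+2$ strands, and the two tangles $T$, $T'$ differ at precisely $n+1$ crossings $c_0,\dots,c_n$, one for each of the $n+1$ ``linking sites'' of the Brunnian configuration (for instance, the $n+1$ bottom crossings in the stair-like picture). Let $\Sigma$ be the singular tangle with rigid double points at $c_0,\dots,c_n$ and otherwise agreeing with the ambient tangle. Using the expansion (\ref{vassiliev-extension'}),
\[
v^\times(\Sigma) \;=\; \sum_{(\epsilon_0,\dots,\epsilon_n)\in\{\pm 1\}^{n+1}} \epsilon_0\cdots\epsilon_n\, v\bigl(L^{\epsilon_0,\dots,\epsilon_n}\bigr),
\]
where $L^{\epsilon_0,\dots,\epsilon_n}$ denotes the result of resolving $c_i$ with sign $\epsilon_i$.

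The heart of the argument is to match this $2^{n+1}$-term sum with the two-term difference $v(L) - v(L')$. Here I would invoke the Brunnian property recorded in Remark~\ref{Brunnian}: for each $i$, erasing the $i$-th strand from the picture turns the two sides of the $C_{n+1}$-move into ambient isotopic tangles. Combinatorially, this translates into: for each $i$ there is a distinguished resolution $\epsilon_i = \epsilon_i^{\mathrm{triv}}$ of $c_i$ that ``detaches'' the $i$-th strand, so that for any fixed choice of $\epsilon_i = \epsilon_i^{\mathrm{triv}}$ the resulting tangle $L^{\epsilon_0,\dots,\epsilon_n}$ is ambient isotopic, independently of the remaining signs and independently of whether we started from the $L$-side or the $L'$-side of the move. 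Consequently, all terms in the sum above for which at least one $\epsilon_i$ equals $\epsilon_i^{\mathrm{triv}}$ cancel in signed pairs, and only the two ``all nontrivial'' terms survive; these are $+v(L)$ and $-v(L')$ (up to an overall sign convention).

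The main obstacle, and the only place where care is needed, is the Brunnian cancellation in the previous paragraph: one must verify that the distinguished resolution $\epsilon_i^{\mathrm{triv}}$ really does isotope the whole local tangle to one that is symmetric under the $L\leftrightarrow L'$ swap (so that the corresponding pair of terms in the signed sum cancel), and that this can be done simultaneously for all the $i$'s. This is a direct inspection of Figure~\ref{cn-move} and is where the ``one-branched'' shape of the $C_{n+1}$-move from Remark~\ref{cn-definition}(4) does the work: each of the $n+1$ clasp crossings interacts with exactly one strand, so the Brunnian trivializations at different indices $i$ are independent and can be combined freely. Once this is in place, the identity $v(L) - v(L') = \pm v^\times(\Sigma)$ follows, and the proposition is proved.
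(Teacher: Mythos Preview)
Your plan is the classical Gusarov alternating-sum argument, which is a valid and well-known route to this result, but your write-up contains a genuine error and a gap. The error is in the cancellation: once $\epsilon_i^{\mathrm{triv}}$ is fixed for each $i$ there is exactly \emph{one} ``all nontrivial'' sign pattern, not two, so the sentence ``only the two all-nontrivial terms survive; these are $+v(L)$ and $-v(L')$'' is incoherent as written. The correct statement is that exactly one resolution of $\Sigma$ is the nontrivial side $L$ while the remaining $2^{n+1}-1$ resolutions are all isotopic to the trivial side $L'$; since the $2^{n+1}$ signs $\prod_i\epsilon_i$ sum to zero, this immediately gives $v^\times(\Sigma)=\pm\bigl(v(L)-v(L')\bigr)$, with no pairing story needed. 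The gap is the appeal to Remark~\ref{Brunnian}. That remark says that \emph{erasing a strand} trivializes the move, but a crossing resolution never erases a strand, and there are $n+2$ strands versus only $n+1$ crossings in your scheme, so the bijection you posit cannot exist. What you actually need is that changing any nonempty subset of $n+1$ suitably chosen crossings takes the nontrivial side to the trivial side; this is true (it is precisely the $(n+1)$-triviality built into the $C_{n+1}$-move, cf.\ Remark~\ref{cn-definition}), but it must be read off Figure~\ref{cn-move} directly rather than deduced from Remark~\ref{Brunnian}.

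For comparison, the paper's proof is entirely different and never forms an $(n+1)$-fold singular tangle. It connects $L$ to $L'$ by a homotopy with just two crossing changes, observes (Figure~\ref{cn-move2}) that the two intermediate $1$-singular tangles $L_\times$, $L'_\times$ are themselves homotopic, and deduces $v(L)=v(L')$ for type~$1$ invariants. For higher types it iterates: a further homotopy from $L_\times$ to $L'_\times$ has its $2$-singular tangles falling into homotopic pairs (Figure~\ref{cn-move3}), and so on down the ladder. Your approach packages the whole computation into one alternating sum; the paper's unwinds it one level at a time along the staircase structure of the $C_n$-move.
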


Although the literature also contains other proofs of Proposition \ref{cn-eq} (see in particular \cite{CDM}*{\S14.2.3}
for a comparatively short one), the following very simple proof does not seem to appear anywhere.

\begin{proof}
Let $v$ be an invariant of tangles with a fixed boundary pattern and $v^\x$ be its standard extension to singular tangles
with the same boundary pattern.
Figure \ref{cn-move2} shows a generic homotopy with two double points between two tangles $L$ and $L'$ related by a $C_n$-move, $n\ge 2$.
The total change of $v$ under this homotopy equals the difference of the values of $v^\x$ on the two singular tangles 
$L_\x$ and $L'_\x$ which occur in it.
Clearly $L_\x$ and $L'_\x$ are homotopic, so if $v$ is a type $1$ invariant, then $v^\x(L_\x)=v^\x(L'_\x)$ and hence $v(L)=v(L')$.

\begin{figure}[h]
\includegraphics[width=\linewidth]{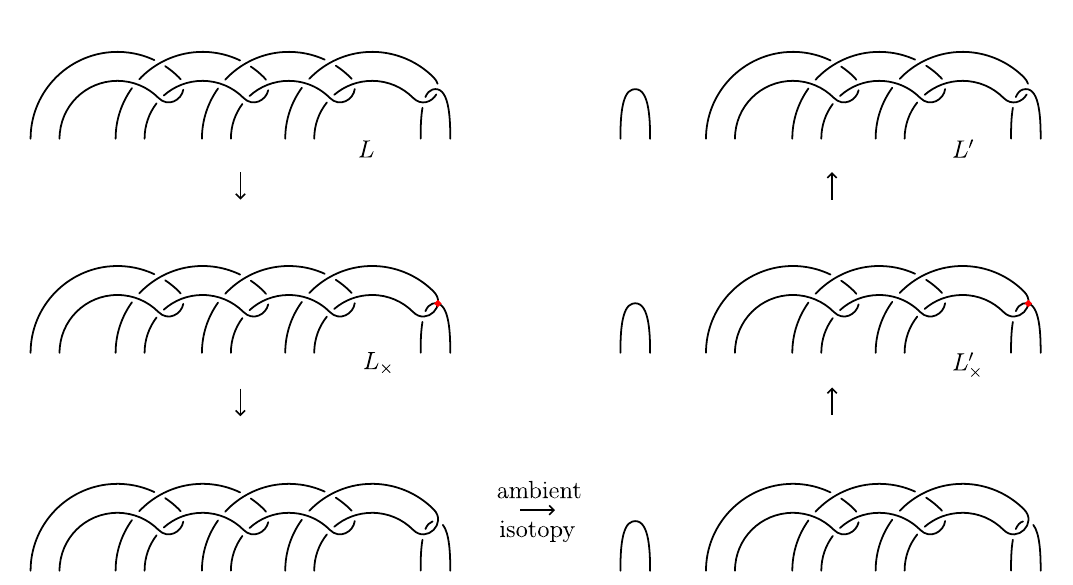}
\caption{Type $1$ invariants are invariant under $C_n$-move for $n\ge 2$.}
\vspace{30pt}\label{cn-move2}
\includegraphics[width=\linewidth]{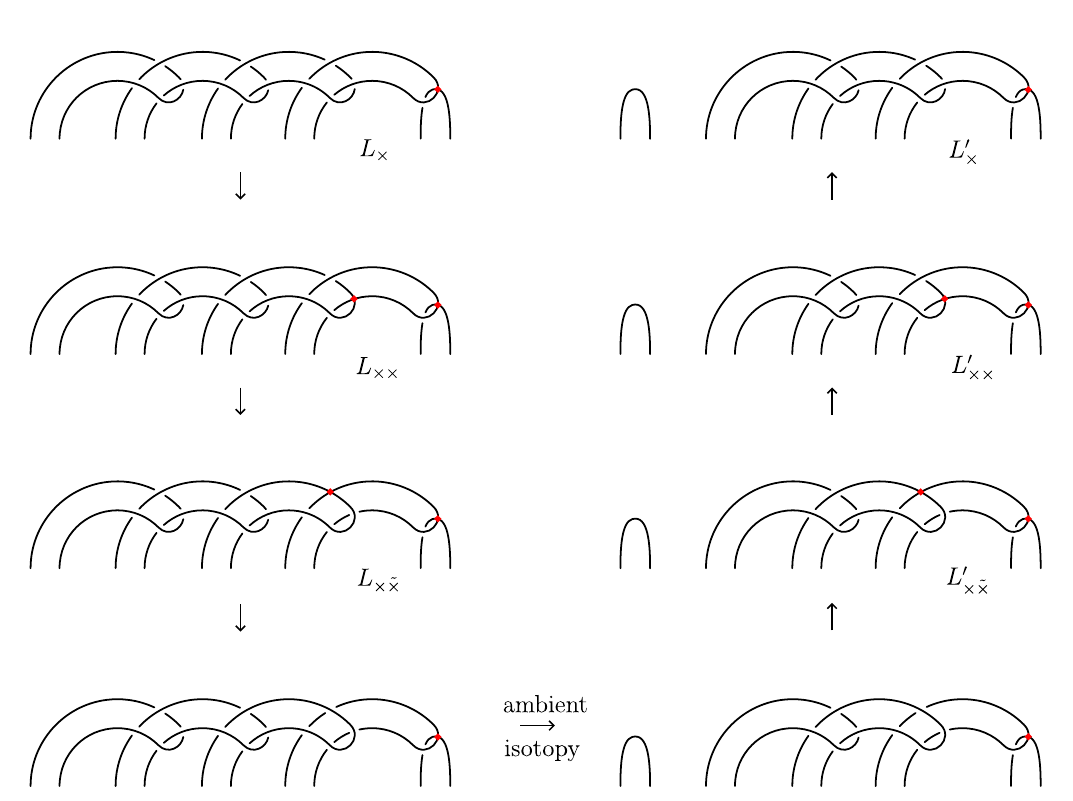}
\caption{Type $2$ invariants are invariant under $C_n$-move for $n\ge 3$.}
\vspace{-60pt}
\label{cn-move3}
\end{figure}

For $n\ge 3$ Figure \ref{cn-move3} shows a generic homotopy with four new double points between the singular tangles $L_\x$ and $L'_\x$.
The total change of $v^\x$ under this homotopy is the difference of its values on the doubly singular tangles $L_{\x\x}$ and $L'_{\x\x}$
plus the difference of its values on the other two doubly singular tangles $L_{\x\tilde\x}$ and $L'_{\x\tilde\x}$.
Clearly the former two are homotopic to each other and the latter two are homotopic to each other; so if $v$ is a type $2$ invariant, then 
$v^\x(L_{\x\x})=v^\x(L'_{\x\x})$ and $v^\x(L_{\x\tilde\x})=v^\x(L'_{\x\tilde\x})$.
Then $v^\x(L_\x)=v^\x(L'_\x)$ and hence $v(L)=v(L')$.

Proceeding in the same fashion, we will eventually obtain that if $v$ is a type $n-1$ invariant, then $v(L)=v(L')$.
\end{proof}

\begin{remark} \label{self-colored}
The proof of Proposition \ref{cn-eq} also shows that if two links or string links are self $C_{n+1}$-equivalent, 
then they are not separated by invariants of type $(n,\dots,n)$.
\end{remark}

Gusarov and Habiro also proved that the converse to Proposition \ref{cn-eq} holds for knots 
\cite{Hab}*{Theorem 6.18}, \cite{Gu2}*{Remark 10.5} (the rational version was originally proved by Gusarov \cite{Gu1}*{Theorem 5.2};
alternative proofs appear in \cite{Sta2}, \cite{Fu}, \cite{Ya06}) but does not hold for links of two components \cite{Gu2}*{Remark 10.8}
and of more than two components \cite{Hab}*{Proposition 7.4}.
However, they conjectured that it holds for string links \cite{Hab}*{Conjecture 6.13}, \cite{Gu2}*{Remark 10.7},
and this conjecture has been verified for $n\le 4$ \cite{MY}.
Gusarov proved a version of this conjecture for partially defined invariants \cite{Gu2}*{Theorem 10.4} and
G. Massuyeau proved the following rational version of this conjecture: If two string links $L$, $L'$ are not separated
by rational invariants of type $n$, then there exists a $k$ such that $\underbrace{L\#\dots\#L}_k$ and 
$\underbrace{L'\#\dots\#L'}_k$ are $C_{n+1}$-equivalent \cite{Mass}*{Theorem 1.1} (see also \cite{CDM}*{12.6.3}).
He also proved the following:

\begin{theorem}[Massuyeau \cite{Mass}*{Corollary 1.3}] \label{massuyeau}
There exists an $r=r(m,n)$ such that if two $m$-component string links are not separated
by invariants of type $r$, then they are $C_n$-equivalent.
\end{theorem}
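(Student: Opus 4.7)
The strategy is to combine Massuyeau's rational version, cited just above the statement, with the group structure on the set of $C_n$-equivalence classes of string links.

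By a theorem of Habiro, the set $G$ of $m$-component string links modulo $C_n$-equivalence forms, under stacking, a finitely generated nilpotent group; in particular its torsion subgroup $T$ is finite.

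Applying the rational version \cite{Mass}*{Theorem 1.1} with $n$ replaced by $n-1$ yields the following: whenever two string links $L, L'$ agree on every rational type-$(n-1)$ invariant, there exists $k \geq 1$ with $L^{\# k}$ and $(L')^{\# k}$ being $C_n$-equivalent. Passing to $G$, this reads $[L][L']^{-1} \in T$. Thus rational invariants of type $n-1$ already separate the cosets of $T$ in $G$, and it remains to produce an $r_2 = r_2(m,n)$ such that type-$r_2$ invariants with values in finite abelian groups separate the (finitely many) elements of $T$ pairwise. Setting $r := \max(n-1, r_2)$ then yields the theorem.

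For this final step I use that $G$ is finitely generated nilpotent: its lower central series $G = \Gamma_1 \supseteq \Gamma_2 \supseteq \cdots \supseteq \Gamma_{c+1} = 1$ terminates with finitely generated abelian quotients, and any nontrivial element of $G$ has nontrivial image in some $\Gamma_i/\Gamma_{i+1}$. Via Habiro's clasper calculus these abelian subquotients are dualised by tree claspers of controlled depth, realising the corresponding finite-abelian-valued projections as type-$r_2$ invariants with $r_2$ bounded in terms of $m$ and $n$; that such invariants have bounded type follows from Proposition \ref{cn-eq}, since they are by construction invariant under $C_{r_2+1}$-equivalence for an explicit $r_2$. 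The main obstacle, and precisely the place where Habiro's attempted proof has a gap, is to perform this realisation with integer (not merely rational) coefficients while keeping $r_2$ controlled; correcting this in the original language of string links (rather than via Massuyeau's homology cylinder reformulation) is the technical heart of the argument.
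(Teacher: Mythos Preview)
Your outline has a genuine gap at the step you yourself flag as the ``technical heart''. You claim that the projections of $G_n$ onto its lower central subquotients $\Gamma_i/\Gamma_{i+1}$ (or onto finite quotients separating $T$) are realised as invariants of some bounded type $r_2$, and you justify this by invoking Proposition~\ref{cn-eq}. But that proposition goes in the opposite direction: it says that $C_{k+1}$-equivalent string links agree on all type~$k$ invariants, not that an invariant which factors through $C_{k+1}$-equivalence is of type~$k$. The latter implication is exactly the Gusarov--Habiro conjecture for string links, which is open; a bounded-$r$ version of it is precisely the content of the theorem you are trying to prove. So the argument, as written, is circular: it asserts without proof that group homomorphisms $G_n\to A$ are finite type invariants of controlled degree, which is the whole point.

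The paper's proof avoids this circularity by a completely different mechanism and does not split into torsion and torsion-free parts at all, nor does it use Massuyeau's rational theorem as a black box. Instead it works in the monoid ring $\Z\L$: Habiro's Lemma~\ref{habiro-main} gives $V_{q(n-1)}\subset\Delta^q+W_n$, and the dimension subgroup Lemma~\ref{dim-subgroup} (applied to the finitely generated nilpotent group $G_n$) provides a $q=q(m,n)$ with $(1-G_n)\cap\Delta_n^q=0$. Pulling back along $p_n\:\Z\L\to\Z G_n$ yields $(1-\L)\cap V_{q(n-1)}\subset W_n$, and a short manipulation using the group inverse modulo $C_{r+1}$ finishes. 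The dimension subgroup lemma is exactly the ingredient that corrects Habiro's gap (he had used the false claim $\bigcap_l\Delta^l=0$ for arbitrary nilpotent groups), and it is what supplies the bounded $r$ without ever needing to exhibit explicit finite type invariants separating torsion elements.
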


It appears to be unknown whether Theorem \ref{massuyeau} holds for (closed) links (compare \cite{HM} and 
in particular Problem 5.4 in there).

In fact, Massuyeau's results in \cite{Mass} are stated in a more general setting of homology cylinders, 
and his proofs are also in that setting.
For our purposes, it is preferable to understand the proof of Theorem \ref{massuyeau} in the original 
language of string links.
To this end we look more closely at Habiro's paper \cite{Hab}.

A slightly weaker statement, asserting that if two string links are not separated by finite type invariants 
then they are $C_k$-equivalent for all $k$, is also found in Habiro's paper \cite{Hab}*{Corollary 6.12}
(this result was also announced by Gusarov \cite{Gu2}*{Remark 10.7} but apparently not written up because of his premature death).
However, it is deduced from \cite{Hab}*{Theorem 6.11}, whose proof in \cite{Hab} contains an error.
Namely, the author claims: ``Since $J$ is the augmentation ideal of the group ring of the nilpotent group $\L_1(\Sigma,n)/C_k$, we
have $\bigcap_{l=1}^\infty J^l=\{0\}$.''
But it is not true that if $J$ is the augmentation ideal of the (integral) group ring of a nilpotent group $G$,
then $\bigcap_{l=1}^\infty J^l=\{0\}$.
For instance, this statement is false for $G=\Z/pq$, where $p$ and $q$ are distinct primes (see \cite{Bu}*{proof of Theorem 1}).
Also, already for $G=\Z$ none of the powers $J^l$ is zero itself, which is why Habiro's assertion \cite{Hab}*{Corollary 6.12}
is weaker than Theorem \ref{massuyeau}.
In the case where $\Sigma$ is a disk, Habiro's group $\L_1(\Sigma,n)/C_k$ is finitely generated \cite{Hab}*{Theorem 5.4(3)}, 
and in this case we will show that the error can be corrected, and moreover the result can be improved to Theorem \ref{massuyeau},
by using the following lemma.

\begin{lemma}[Hartley {[see below]}; Massuyeau \cite{Mass}*{Corollary 4.8}] \label{dim-subgroup} Let $G$ be a finitely generated nilpotent group.
Let $d_nG=G\cap(1+\Delta^n)$, where $\Delta$ is the augmentation ideal of $\Z G$.
Then $d_nG=1$ for some finite $n$.
\end{lemma}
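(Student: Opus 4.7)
The plan is to reduce the lemma to Jennings' classical theorem on dimension subgroups of torsion-free nilpotent groups, handling the torsion separately by a residual-finiteness argument. Every finitely generated nilpotent group $G$ has a finite torsion subgroup $T(G)$ (itself a normal subgroup), with $G/T(G)$ torsion-free finitely generated nilpotent. The quotient map $\pi\: G\to G/T(G)$ extends to a ring homomorphism $\Z G\to\Z(G/T(G))$ that carries $\Delta(G)^n$ onto $\Delta(G/T(G))^n$, and hence carries $d_n G$ into $d_n(G/T(G))$. Jennings' theorem asserts that $d_n H=\gamma_n H$ (the $n$-th term of the lower central series) for every torsion-free finitely generated nilpotent group $H$; applied to $H=G/T(G)$ of nilpotency class $c$, this yields $d_{c+1}(G/T(G))=\gamma_{c+1}(G/T(G))=1$. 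Consequently $d_n G\subseteq T(G)$ for all $n\ge c+1$.

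Since $T(G)$ is finite, the decreasing sequence $d_n G$ ($n\ge c+1$) stabilizes at $D_\infty:=\bigcap_n d_n G\subseteq T(G)$, and it remains to show $D_\infty=1$. If $g\in D_\infty\setminus\{1\}$, then by residual finiteness of finitely generated nilpotent groups there is a finite quotient $G\to\bar G$ sending $g$ to some $\bar g\ne 1$; the induced ring map again sends $\Delta(G)^n$ into $\Delta(\bar G)^n$, whence $\bar g\in d_n\bar G$ for all $n$. So it suffices to prove the lemma when $G$ is a finite nilpotent group. A Sylow decomposition reduces this further to the case of a finite $p$-group $H$, using that the group ring and augmentation ideal of a direct product split as tensor products over $\Z$; and for a finite $p$-group $H$ the reduction $\Z H\to\mathbb{F}_p H$ gives $d_n(H;\Z)\subseteq d_n(H;\mathbb{F}_p)$. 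By the Brauer--Jennings--Zassenhaus theorem the latter is the Jennings series of $H$, which terminates in finitely many steps since each successive quotient is an elementary abelian $p$-group and $H$ is finite.

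The hardest point is the passage from the torsion-free case to the general case, i.e.\ showing $Z\cap(1+\Delta(G)^n)=1$ for $n$ sufficiently large when $Z$ is the center of a finitely generated nilpotent $G$. A naive induction on the nilpotency class stalls here: the induced filtration $\{\Delta(G)^n\cap\Z Z\}$ on $\Z Z$ can be strictly finer than the $\Delta(Z)$-adic filtration, and one cannot invoke the Artin--Rees lemma, because $\Z G$ fails to be a finitely generated $\Z Z$-module whenever $G/Z$ is infinite. The torsion-free detour through Jennings' theorem, together with the residual-finiteness reduction for the finite torsion, bypasses this obstruction cleanly, at the cost of invoking two substantial classical inputs (Jennings for torsion-free nilpotent groups and Brauer--Jennings--Zassenhaus for finite $p$-groups).
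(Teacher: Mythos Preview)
Your argument is correct in outline and takes a genuinely different route from the paper's. One imprecision: what Jennings actually proved for torsion-free nilpotent groups is that the integral dimension subgroup $d_nH$ equals the \emph{isolator} $\sqrt{\gamma_nH}$, not $\gamma_nH$ itself (and these can differ when some quotient $\gamma_iH/\gamma_{i+1}H$ has torsion, which can happen even for torsion-free $H$). This does not damage your proof, since you only use the consequence $d_{c+1}H=1$, and indeed $\sqrt{\gamma_{c+1}H}=\sqrt{1}=1$ when $H$ is torsion-free. Your treatment of the finite case (Sylow reduction to $p$-groups, then reduction mod $p$ and the Brauer--Jennings--Zassenhaus description of the mod-$p$ dimension series) is fine; this is essentially the content of Buckley's short argument that the paper cites.

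The paper's own proof (in the remark following the lemma) proceeds differently: it quotes Buckley's result that $d_mT=1$ for some $m$ when $T$ is the finite torsion subgroup, and then invokes a theorem of Hartley as a black box to deduce $d_nG=1$ for some $n$. Your proof avoids Hartley's theorem entirely by using Jennings' torsion-free result to force the sequence $d_nG$ into the finite set $T(G)$, where it must stabilise; then the residual-finiteness step (which the paper also records, but only to obtain the weaker conclusion $\bigcap_n d_nG=1$) finishes the job. So your approach trades one deep input (Hartley) for another (Jennings in the torsion-free case), and is arguably more transparent since the stabilisation argument makes the passage from $\bigcap_n d_nG=1$ to $d_NG=1$ explicit. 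The third paragraph of your write-up is commentary on a failed alternative strategy rather than part of the proof; it is accurate but could be omitted.
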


It is easy to see that $\gamma_nG\subset d_nG$ for all $n$ and it is known that $d_nG=\gamma_nG$ for $n\le 3$ 
(see \cite{Pa}*{V.5.3 and V.5.10}).
It had been an open problem in group theory for about 30 years whether this holds for all $n$, until E. Rips solved it in 1972;
it is now known that for each $n\ge 4$ there exists a finitely presented group $G$ with $\gamma_nG=1$ and $d_nG\ne 1$ 
(see \cite{MiP}*{2.11}); also there exists a finitely presented $G$ with $\gamma_5G=1$ and $d_6G\ne 1$ \cite{MiP}*{2.15}.
However, $\gamma_nG$ always has a finite index in $d_nG$ and this index is bounded above 
by a certain explicit function of $n$ (see \cite{MiP}*{2.17}).
According to \cite{MiP}*{Problem 2.51}, it is unknown whether Lemma \ref{dim-subgroup} 
holds when $G$ is not finitely generated.

\begin{remark} (a) A short proof of Lemma \ref{dim-subgroup} for finite $G$ appears in \cite{Bu}*{Corollary~ 1}.

(b) It is also easy to prove that $\bigcap_n d_nG=1$ under the hypothesis of Lemma \ref{dim-subgroup}.
Indeed, suppose that $\bigcap_{n=1}^\infty d_nG$ contains an element $g\ne 1$.
Since $G$ is finitely generated and nilpotent, it is polycyclic and hence residually finite 
(see \cite{Rob}*{5.2.18 and 5.4.17}).
Thus $G$ admits a homomorphism $f$ onto a finite group $Q$ such that $f(g)\ne 1$.
Clearly, $Q$ is nilpotent (see \cite{Hall}*{10.3.1}) and $f(g)\in\bigcap_{n=1}^\infty d_nQ$, which contradicts 
the case of Lemma \ref{dim-subgroup} where $G$ is finite \cite{Bu}*{Corollary~ 1}.

(c) Lemma \ref{dim-subgroup} can be seen to follow from a result of Hartley \cite{Hart'}.
In more detail, let $T$ be the torsion subgroup of $G$.
Since $G$ is finitely generated and nilpotent, it contains no infinitely ascending chains of subgroups 
(see \cite{Rob}*{5.2.18}), and it follows that $T$ is finitely generated.
Since $T$ is a finitely generated nilpotent torsion group, it is finite (see \cite{Rob}*{5.2.18 or 5.4.11}).
Since $T$ is a finite nilpotent group, by the case of Lemma \ref{dim-subgroup} where $G$ is finite 
\cite{Bu}*{Corollary~ 1} $d_m T=1$ for some finite $m$.
Since $G$ is nilpotent, by Hartley's theorem \cite{Hart'}*{Corollary A1} this implies that $d_n G=1$ 
for some finite $n$.
\end{remark}

\begin{proof}[Alternative proof of Theorem \ref{massuyeau}] This is a correction of \cite{Hab}*{proof of Theorem 6.11}.

Let us fix an $m>1$, and let $\L$ be the monoid of $m$-component string links.
Since the relation of $C_k$-equivalence, viewed as a subset of $\L\x\L$, is a submonoid of $\L\x\L$, 
the quotient $G_k$ of $\L$ by the relation of $C_k$-equivalence is a monoid.
Moreover, the following was shown by Habiro (that $G_k$ is a group, and that it is nilpotent was also 
shown by Gusarov \cite{Gu2}*{9.2, 9.4(3)}):

\begin{lemma}[Habiro \cite{Hab}*{5.4(1,3)}] \label{fg-nilpotent} $G_k$ is a finitely generated nilpotent group.
\end{lemma}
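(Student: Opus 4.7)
The plan is to use Habiro's clasper calculus to reduce the statement to combinatorial finiteness claims about tree claspers. Since a $C_k$-move takes place inside a $3$-ball disjoint from the top and bottom faces of $I^3$, it commutes with stacking on either side; hence $C_k$-equivalence is a two-sided congruence on $\L$, and $G_k$ inherits a monoid structure whose unit is the class $[U]$ of the trivial string link.

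The hardest step is producing inverses. Here I would put any $[L]\in G_k$ in Habiro normal form: up to $C_k$-equivalence, $L$ can be realized from $U\:mI\to I^3$ by simultaneous surgery on a finite disjoint union of basic tree claspers $C_1,\dots,C_r$ of degrees $< k$ (see Remark \ref{cn-definition}, definition (3)). For each basic clasper $C_i$ of degree $j$, there is a partner clasper $C_i'$, obtained from $C_i$ by inserting a single half-twist in one edge, with the property that surgery on $C_i\sqcup C_i'$ is $C_{j+1}$-equivalent to $U$. Descending induction on $k-j$ then yields a $C_k$-inverse for the class of the surgery on each $C_i$; since the invertible elements of a monoid form a submonoid, the product $[L]=[T_{C_1}]\cdots[T_{C_r}]$ is invertible in $G_k$.

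For nilpotency, let $F_j\subseteq G_k$ denote the subgroup generated by the classes of string links that are $C_j$-equivalent to $U$, giving a decreasing filtration $G_k=F_1\supseteq F_2\supseteq\dots\supseteq F_k=\{[U]\}$. The commutator identities of the clasper calculus (Habiro's move 10 and its consequences) give $[F_i,F_j]\subseteq F_{i+j}$, so the lower central series satisfies $\gamma_j G_k\subseteq F_j$ and terminates by step $j=k$; hence $G_k$ is nilpotent of class at most $k-1$. For finite generation, a basic tree clasper of degree $j<k$ on $U$ is specified, up to data that survives modulo $C_k$-equivalence, by a rooted unitrivalent tree with $j$ trivalent vertices together with an assignment of each of its $j+1$ leaves to one of the $m$ strands of $U$ and finitely many framing bits; for fixed $m$ this is a finite set, and taking the union over $j=1,\dots,k-1$ produces a finite generating set for $G_k$. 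The main obstacle is the inverse construction in the second paragraph, since it is the one step that requires genuine input (the existence of a cancelling partner clasper) from the clasper calculus; nilpotency and finite generation then follow by tracking the combinatorics and the commutator rules.
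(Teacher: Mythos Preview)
The paper does not give a proof of this lemma: it is quoted as a result of Habiro \cite{Hab}*{5.4(1,3)} (with the group and nilpotency parts also attributed to Gusarov \cite{Gu2}). So there is nothing in the paper itself to compare against.

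Your sketch does follow the outline of Habiro's actual argument, and the three ingredients you isolate---inverses via a half-twisted partner clasper with descending induction on degree, the commutator estimate $[F_i,F_j]\subseteq F_{i+j}$ from the clasper calculus, and reduction to finitely many combinatorial types of tree claspers---are the right ones. Two steps deserve more care if this is to stand as a proof rather than a pointer to \cite{Hab}. First, the identity $[L]=[T_{C_1}]\cdots[T_{C_r}]$ is not automatic: simultaneous surgery on disjoint tree claspers on $U$ is not literally the stacked product of the individual surgeries, and one has to argue (for instance by isotoping the claspers to disjoint height levels in $I^3$, which is possible since they are pairwise disjoint and $U$ is the trivial string link) that the two agree in $G_k$. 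Second, and more seriously, your finite-generation step asserts that a tree clasper on $U$ is determined modulo $C_k$ by its tree type, its leaf-to-strand assignment, and ``finitely many framing bits''. But a priori the embedding of the clasper in $I^3\setminus U$ carries infinitely many isotopy classes: the edges may be knotted or linked with one another, and the leaves may wind arbitrarily around the strands. The reduction to finitely many classes modulo $C_k$ is itself a non-trivial application of the clasper calculus---crossing changes on clasper edges and slides of leaves along strands cost only higher-degree tree claspers---and this is precisely where the substantive work in Habiro's Theorem~5.4 lies. Your sentence hides that work inside the phrase ``up to data that survives modulo $C_k$-equivalence''.
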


It is easy to see that the group ring $\Z G_k$ is isomorphic to the quotient $\Z\L/W_k$, where $W_k$ is 
the two-sided ideal of the monoid ring $\Z\L$ generated by all elements of the form $L-L'$, where $L$ 
is $C_k$-equivalent to $L'$.

On the other hand, let $V_k$ be the two-sided ideal of $\Z\L$ generated by all singular links with $k$ double points, where a singular link $L$
with $k+1$ double points is identified with an alternating sum of links by the formula (\ref{vassiliev-extension'}).%
\footnote{Thus $V_k$ is $J_k(D^2,m)$ in Habiro's notation; $W_k$ is his $J_{k,1}(D^2,m)$ and $\L$ is his $\L(D^2,m)=\L_1(D^2,m)$.}
Thus a type $k$ invariant on $m$-component string links with values in an abelian group $A$ can be thought of as a homomorphism
$\Z\L\to A$ which vanishes on $V_{k+1}$.

Proposition \ref{cn-eq} implies that $W_k\subset V_k$ (actually it is obtained in \cite{Hab} as a consequence of this inclusion).
It is easy to see that $W_1=V_1$.
Moreover, $W_1$ coincides also with the augmentation ideal $\Delta$ of $\Z\L$, that is, the kernel of the augmentation homomorphism 
$\epsilon\:\Z\L\to\Z$, which is given by $\epsilon(\L)=1$.
(Indeed, $\Delta$ is easily seen to be generated by elements of the form $L_0-L_1$, where $L_0,L_1\in\L$, and if $L_t$ is 
a generic homotopy from $L_0$ to $L_1$, and $L_{t_1},\dots,L_{t_k}$ are its critical levels, which are singular links
with one point each, then $L_0-L_1=\epsilon_1L_{t_1}+\dots+\epsilon_kL_{t_k}$ for some signs $\epsilon_i\in\{1,-1\}$.)
In the direction of the reverse inclusion ``$V_k\subset W_k$'', Habiro shows, in particular:

\begin{lemma}[Habiro \cite{Hab}] \label{habiro-main} $V_{q(n-1)}\subset\Delta^q+W_n$.
\end{lemma}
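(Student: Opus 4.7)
The plan is induction on $q$, combining the monoid structure on $\L$ (stacking of string links) with the multiplicativity of the Vassiliev resolution (\ref{vassiliev-extension'}) under that operation. The base case $q=1$ asks for $V_{n-1}\subset \Delta+W_n$: grouping the alternating sum (\ref{vassiliev-extension'}) by the sign at a single double point presents any Vassiliev-resolved singular $k$-link as a difference of two Vassiliev-resolved singular $(k-1)$-links, so $V_k\subset V_{k-1}$ for every $k\ge 1$, and hence $V_{n-1}\subset V_1=\Delta$ for $n\ge 2$. (The case $n=1$ is vacuous since $W_1=\Delta$.)

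For the inductive step, take a singular $m$-string link $L$ with $(q+1)(n-1)$ double points. By general position one may arrange these double points to have distinct heights $z_1<\dots<z_{(q+1)(n-1)}$ in $I^3$, and pick a generic level $z_0\in(z_{n-1},z_n)$. The plane $\{z=z_0\}$ meets $L$ transversely in $m$ points, which a small ambient isotopy inside a bicollar of the plane (containing no double points of $L$) moves to the standard $\Xi_m$-positions. Rescaling each half to $I^3$ then presents $L$ as a composition $L=L_1\cdot L_2$, where $L_1$ carries the $n-1$ double points below $z_0$ and $L_2$ carries the remaining $q(n-1)$. Since the signs $\epsilon_i$ in (\ref{vassiliev-extension'}) are applied independently at each double point, the Vassiliev resolution of $L$ in $\Z\L$ is precisely the product (stacking) of the Vassiliev resolutions of $L_1$ and of $L_2$.

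An arbitrary generator of the two-sided ideal $V_{(q+1)(n-1)}$ then has the form $a L b$ with $a,b\in\Z\L$ and, after this slicing, factors as
\[aLb=(aL_1)(L_2 b),\]
with $aL_1\in V_{n-1}\subset\Delta$ by the base case and $L_2 b\in V_{q(n-1)}\subset\Delta^q+W_n$ by the inductive hypothesis. Since $W_n$ is a two-sided ideal, the product lies in $\Delta(\Delta^q+W_n)\subset\Delta^{q+1}+W_n$; and since $\Delta^{q+1}+W_n$ is itself a two-sided ideal, summing over generators yields $V_{(q+1)(n-1)}\subset\Delta^{q+1}+W_n$. The one step I expect to require some care is the slicing itself: making sure that the $m$ intersection points can always be re-aligned to $\Xi_m$-position without introducing new double points, and that the resulting $L_1,L_2$ genuinely recompose to $L$ in $\L$. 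This is standard general-position bookkeeping, however, and no new ideas beyond it seem necessary.
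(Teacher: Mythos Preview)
Your slicing step has a genuine gap. The claim that a singular string link $L$ with $N$ double points can always be presented as a stacking $L_1\cdot L_2$ with a prescribed split of the double points is false in general, and the obstruction is combinatorial rather than a matter of general-position bookkeeping. The chord diagram of $L$ (recording, on each strand, the pairing of preimages of double points) is an invariant of singular-tangle isotopy; and if $L=L_1\cdot L_2$ then on every strand all chord-endpoints coming from $L_1$ precede all those coming from $L_2$, so the chord diagram of $L$ is the concatenation of those of $L_1$ and $L_2$. A fully nested diagram --- say a one-component singular string link with two double points $p,q$ whose preimages occur in the order $p\,q\,q\,p$ --- is not such a concatenation. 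Concretely: if you place $p$ below your slicing level $z_0$ and $q$ above it, the height function of the single strand is forced below $z_0$ at both $p$-preimages and above $z_0$ at both $q$-preimages, in the alternating order $-\,+\,+\,-\,+$ (including the top endpoint), so it crosses $z_0$ at least three times regardless of how you isotope. Thus the plane $\{z=z_0\}$ never meets $L$ in exactly $m$ points, and no factorisation with one double point on each side exists. For larger $N$ the fully nested diagram blocks every nontrivial split, so your inductive step already fails at $q=2$, $n=2$.

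The paper's route through Habiro's Proposition~6.10 circumvents this by using clasper calculus rather than a naive geometric slice: one obtains an inclusion of $V_k$ into a sum of products $W_{k_1}\cdots W_{k_l}$ ranging over all compositions $k_1+\dots+k_l=k$ with each $k_i\ge 1$, and these products do \emph{not} all arise from a single geometric factorisation of one singular link. The pigeonhole the paper quotes is then immediate: if every $k_i\le n-1$ then $l(n-1)\ge q(n-1)$ forces $l\ge q$, so the product lies in $W_1^{\,l}\subset\Delta^q$; otherwise some $k_i\ge n$ and the product lies in the two-sided ideal $W_n$. Establishing that generating set for $V_k$ is where the actual work sits, and it genuinely requires Habiro's machinery; a direct slicing argument cannot replace it.
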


\begin{proof} This is a special case of \cite{Hab}*{Proposition 6.10}, which is seen by using that if $k_1+\dots+k_l=q(n-1)$, 
then either $l\ge q$ or some $k_i\ge n$.
\end{proof}

Let $\Delta_n$ be the augmentation ideal of $\Z G_n$.
Since the projection $p_n\:\Z\L\to\Z G_n$ commutes with the two augmentation homomorphisms, $p_n(\Delta)\subset\Delta_n$.
Since $\ker p_n=W_n$, we get that $p_n(\Delta^l+W_n)\subset\Delta_n^l$ for all $l$.
On the other hand, by Lemma \ref{dim-subgroup} there exists a $q=q(m,n)$ such that $(1-G_n)\cap\Delta_n^q=0$.
Therefore $(1-\L)\cap (\Delta^q+W_n)\subset\ker p_n=W_n$.
Since $V_{q(n-1)}\subset\Delta^q+W_n$, we get that $(1-\L)\cap V_{q(n-1)}\subset(1-\L)\cap(\Delta^q+W_n)\subset W_n$.

Let $r=q(n-1)-1$.
We may assume that $q,n\ge 2$, whence $r\ge n-1$.
Suppose that $L,L'\in\L$ are not separated by invariants of type $r$.
Thus $L-L'\in V_{r+1}$.
Let $\bar L\in\L$ be a string link which is inverse to $L$ modulo $C_{r+1}$-equivalence.
Since $V_{r+1}$ is a right ideal, $L\bar L-L'\bar L\in V_{r+1}$.
On the other hand $L\bar L$ is $C_{r+1}$-equivalent to $1\in\L$, and therefore $1-L\bar L\in W_{r+1}\subset V_{r+1}$.
Then $1-L'\bar L\in V_{r+1}$.
Consequently $1-L'\bar L\in (1-\L)\cap V_{r+1}\subset W_n$.
Since $W_n$ is a right ideal, $L-L'\bar LL\in W_n$.
On the other hand $\bar LL$ is $C_{r+1}$-equivalent to $1\in\L$, and therefore $\bar LL-1\in W_{r+1}\subset W_n$.
Since $W_n$ is a left ideal, $L'\bar LL-L'\in W_n$.
Thus $L-L'\in W_n$, and so $L$ is $C_n$-equivalent to $L'$.
\end{proof}

\begin{theorem} \label{locally-additive} There exists an $r=r(m,n)$ such that if two $m$-component string links are not separated
by locally additive invariants of type $r$, then they are $C_n$-equivalent.
\end{theorem}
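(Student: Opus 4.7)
My plan is to adapt the proof of Theorem \ref{massuyeau} to this more constrained setting, exploiting two structural observations. First, the submonoid $\L_{ts} \subset \L$ of totally split string links is \emph{central} in $\L$ under stacking, because a local knot can always be slid along its strand past any configuration. Second, since the image $G_n^{ts}$ of $\L_{ts}$ in $G_n$ is therefore a central (and finitely generated abelian) subgroup of the finitely generated nilpotent group $G_n$, the quotient $H_n := G_n / G_n^{ts}$ is again finitely generated and nilpotent, so Lemma \ref{dim-subgroup} applies to $H_n$.

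First I would translate the hypothesis algebraically: the statement that $L$ and $L'$ are not separated by locally additive type $r$ invariants is equivalent to $L - L' \in V_{r+1} + \A$, where $\A \subset \Z\L$ is the $\Z$-span of the local-additivity relations $LK - L - K$ with $L \in \L$ and $K \in \L_{ts}$. Since both $L - L'$ and $V_{r+1}$ lie in the augmentation ideal $\Delta$, one may work with $\A' := \A \cap \Delta$; a short calculation identifies $\A'$ with the $\Z$-span of the products $(L-1)(K-1)$ over $L \in \L$ and $K \in \L_{ts}$, and the centrality of $\L_{ts}$ upgrades this to a two-sided ideal of $\Z\L$ equal to $\Delta \cdot \Delta_{ts}$, where $\Delta_{ts}$ is the augmentation ideal of the central subring $\Z\L_{ts}$. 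Following Massuyeau, let $\bar L$ be a $C_n$-inverse of $L$; then from $L\bar L \sim_{C_n} 1$ and the hypothesis one gets $1 - L'\bar L \in W_n + V_{r+1} + \A'$. Projecting to $\Z H_n$: by Habiro's Lemma \ref{habiro-main} the image of $V_{r+1}$ lies in $\Delta_{H_n}^q$ whenever $r + 1 \geq q(n-1)$, while $\A'$ is annihilated because $\Delta_{ts}$ is. Lemma \ref{dim-subgroup} applied to $H_n$ yields a $q_H = q_H(m,n)$ such that $(1 - H_n) \cap \Delta_{H_n}^{q_H} = 0$; taking $r$ with $r + 1 \geq q_H (n-1)$ forces the image of $1 - L'\bar L$ in $\Z H_n$ to vanish, so $[L'\bar L] \in G_n^{ts}$, and thus $L' \sim_{C_n} K_0 L$ for some totally split $K_0$.

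To finish I would show $K_0 \sim_{C_n}$ unlink. Lemma \ref{knot-link}(a) gives, for every additive type-$r$ knot invariant $w$, a locally additive type-$r$ link invariant by composing with ``evaluate $w$ on the $i$th component''; the hypothesis therefore yields agreement of $w$ on the $i$th components of $L$ and $L'$, and the knot case of the Habiro--Stanford theorem shows these components are $C_{r+1}$-equivalent, hence $C_n$-equivalent, for each $i$. Reading off components from $L' \sim_{C_n} K_0 L$ as componentwise connect sums and cancelling in the nilpotent group $G_n^{(1)}$ of $C_n$-classes of knots, one finds that every component of $K_0$ is $C_n$-equivalent to the unknot, whence $K_0 \sim_{C_n}$ unlink and therefore $L \sim_{C_n} L'$. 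The main obstacle --- and the only place where the argument truly goes beyond Theorem \ref{massuyeau} --- is identifying $\A'$ as a two-sided ideal equal to $\Delta \cdot \Delta_{ts}$: without this identification, the projection to $\Z H_n$ would not annihilate the local-additivity term, and the finer application of Lemma \ref{dim-subgroup} at the level of $H_n$ would not be available.
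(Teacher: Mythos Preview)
Your argument is correct and takes a genuinely different route from the paper's. Both proofs begin identically: one translates local additivity into vanishing on the two-sided ideal $\Delta\cdot\Delta_{ts}$ (your $\A'$), using the local commutativity of totally split string links, and one observes that the image $G_n^{ts}$ of $\L_{ts}$ in $G_n$ is central. From there the arguments diverge. The paper exploits the direct-product splitting $G_n\cong Q_n\times G_n^{ts}$ to build an auxiliary ring $R_n$ with underlying additive group $\Z Q_n\oplus G_n^{ts}$ and a specific twisted multiplication, and shows that the quotient map $\phi_n\:\Z G_n\to R_n$ has kernel \emph{exactly} $\Delta_n\Lambda_n$; applying Lemma~\ref{dim-subgroup} to $Q_n$ then yields $[L]=[L']$ in $G_n$ in one stroke. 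Your approach instead passes to the coarser quotient $\Z G_n\to\Z(G_n/G_n^{ts})$, which kills $\Delta_{ts}$ outright; the dimension-subgroup argument for $H_n=G_n/G_n^{ts}$ then only gives $L'\sim_{C_n}K_0\# L$ with $K_0$ totally split, and you close the gap by invoking the knot case (Stanford--Habiro plus Gusarov--Habiro) to force each component of $K_0$ to be $C_n$-trivial.

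What each approach buys: the paper's construction of $R_n$ is more work up front but is self-contained---it does not need the knot theorem as a black box, and it pins down the relevant kernel precisely. Your argument avoids the $R_n$ construction entirely, trading it for a two-step reduction that is conceptually transparent (first reduce modulo local knots, then kill the residual local knot by the $1$-component theory). One small point worth making explicit in your write-up: the observation that ``evaluate an additive knot invariant on the $i$th component'' is locally additive on string links is exactly what makes the hypothesis descend to the components, and you should check that your chosen $r$ is at least $n-1$ so that $C_{r+1}$-equivalence of the components implies $C_n$-equivalence. Both are immediate, but they are the hinges of your second step.
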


Let us note that $L\# K=K\# L$ if $K$ and $L$ are $m$-component string links and $K$ is totally split.
We will call this property the {\it local commutativity} of string links.
It is proved similarly to the usual proof that the monoid of knots is commutative (see \cite{BZ}*{Figure 7.3} or \cite{Fox-qt}*{p.\ 140}).

\begin{proof} This is an elaboration on the previous proof (the alternative proof of Theorem \ref{massuyeau}), 
additionally employing some ideas from \cite{Hab}*{proof of Theorem 6.18}.
We will use the notation of the previous proof, except that $q$ and $r$ will be chosen differently.

Let $\K$ be the submonoid of $\L$ consisting of all totally split $m$-component string links.
Clearly, an invariant $v\:\L\to A$ is locally additive if and only if its additive extension $\bar v\:\Z\L\to A$
vanishes on all elements of the form $L\#K-L-K$, where $L\in\L$ and $K\in\K$.
This is equivalent to saying that $\bar v$ vanishes on $1\in\L$ and on all elements of the form $(1-L)(1-K)$,
where $L\in\L$ and $K\in\K$.
Since elements of the form $1-L$, where $L\in\L$, additively generate the augmentation ideal $\Delta$ of $\Z\L$, 
and elements of the form $1-K$, where $K\in\K$, additively generate the augmentation ideal $\Lambda$ of $\Z\K$,
elements of the form $(1-L)(1-K)$ additively generate $\Delta\Lambda$.
Thus $v$ is locally additive if and only if $\bar v$ vanishes on $1\in\Lambda$ and on $\Delta\Lambda$.
Let us note that $\Lambda$ is not an ideal of $\Z\L$, but $\Delta\Lambda$ is a two-sided ideal of $\Z\L$.
(Indeed, since $\Delta$ a left ideal, so is $\Delta\Lambda$; similarly, $\Lambda\Delta$ is a right ideal, but
$\Lambda\Delta=\Delta\Lambda$ due to the local commutativity of string links.)

Let $H_n$ be the image of $\K$ in $G_n$, and let $\Lambda_n$ be the augmentation ideal of $\Z H_n$.
It equals $p_n(\Lambda)$ since both are additively generated by elements of the form $1-h$, where $h\in H_n$.
It follows that $p_n(\Delta\Lambda)=\Delta_n\Lambda_n$, and consequently 
$\Z\L/(W_n+\Delta\Lambda)=\Z G_n/(\Delta_n\Lambda_n)$.
Our next goal is to get an explicit description of this quotient ring.

By the local commutativity $H_n$ lies in the center of $G_n$, and in particular it is a normal subgroup.
On the other hand, the totally split string link $K_L$ corresponding to a given string link $L$ may be 
described so as be well-defined not just up to ambient isotopy, but as a specific string link.
Consequently a self-$C_n$-move on $L$ induces a self-$C_n$-move on $K_L$; whereas a $C_n$-move on $L$
which is not a self-$C_n$-move induces an ambient isotopy on $K_L$ (see Remark \ref{Brunnian}).
Thus the $C_n$-equivalence class of $K_L$ is determined by the $C_n$-equivalence class of $L$.
This yields an epimorphism $G_n\to H_n$, which restricts to the identity on $H_n$.
Its kernel $Q_n$ is a normal subgroup of $G_n$ such that $Q_nH_n=G_n$ and $Q_n\cap H_n=1$.
Hence $G_n=Q_n\x H_n$.

Let $R_n$ be the ring whose additive group is $(\Z Q_n)\oplus H_n$, with multiplication given by
$(r_1,h_1)\cdot(r_2,h_2)=(r_1r_2,\,h_1^{\epsilon(r_2)}h_2^{\epsilon(r_1)})$.
(Here $H_n$ is written multiplicatively, even though it is abelian. 
It straightforward to verify that the multiplication is associative and distributive with respect to the addition.
Clearly, the multiplicative identity is $(1,1)$.)
A homomorphism of the additive groups $\phi_n\:\Z G_n\to R_n$ is given on the additive generators by 
$\phi_n(qh)=(q,h)$ for any $q\in Q_n$ and $h\in H_n$ and is easily seen to be a ring homomorphism
(it suffices to check its multiplicativity on the additive generators).
It can be described in general by
$\phi_n\big(m_1q_1h_1+\dots+m_kq_kh_k\big)=(m_1q_1+\dots+m_kq_k,\,h_1^{m_1}\cdots h_k^{m_k})$,
where $h_i\in H_n$, $q_i\in Q_n$ and $m_i\in\Z$.

It is easy to see that $\ker\phi_n$ contains all elements of the form $qh'h-qh'-h+1$  
where $q\in Q_n$ and $h',h\in H_n$.
These are the same as elements of the form $(g-1)(h-1)$, where $g\in G_n$ and $h\in H_n$.
Hence $\ker\phi_n$ contains $\Delta_n\Lambda_n$.
To prove the reverse inclusion it suffices to show that the quotient map $\Z G_n\to\Z G_n/(\Delta_n\Lambda_n)$
factors though $\phi_n$.
The relations $(h-1)(h'-1)=0$ for $h,h'\in H_n$ can be rewritten as $h+h'=hh'+1$ and imply
$m_1h_1+\dots+m_kh_k=h_1^{m_1}\cdots h_k^{m_k}+(m_1+\dots+m_k-1)$ for $h_i\in H_n$ and $m_i\in\Z$.
The relations $(q-1)(h-1)=0$ for $q\in Q_n$ and $h\in H_n$ can be rewritten as $qh=q+h-1$ and imply
$m_1q_1h_1+\dots+m_kq_kh_k=m_1q_1+\dots+m_kq_k+m_1h_1+\dots+m_kh_k-(m_1+\dots+m_k)=
m_1q_1+\dots+m_kq_k-1+h_1^{m_1}\cdots h_k^{m_k}$ for $q_i\in Q_n$, $h_i\in H_n$ and $m_i\in\Z$.
In particular, we get $(m_1q_1+\dots+m_kq_k)h=m_1q_1+\dots+m_kq_k-1+h^{m_1+\dots+m_k}$, or 
$rh=r-1+h^{\epsilon(r)}$ for $r=m_1q_1+\dots+m_kq_k\in\Z Q_n$ and $h\in H_n$.
Hence given $r_1,r_2\in\Z Q_n$ and $h_1,h_2\in H_n$, we obtain 
$(r_1-1+h_1)(r_2-1+h_2)=(r_1-1)(r_2-1)+r_1+r_2-2+h_1^{\epsilon(r_2)}+h_2^{\epsilon(r_1)}-h_1-h_2+h_1h_2=
r_1r_2-1+h_1^{\epsilon(r_2)}h_2^{\epsilon(r_1)}$.
If we introduce the notation $(r,h)=r-1+h$ for $r\in\Z Q_n$ and $h\in H_n$,
this becomes $(r_1,h_1)\cdot(r_2,h_2)=(r_1r_2,\,h_1^{\epsilon(r_2)}h_2^{\epsilon(r_1)})$.
This shows that the ring $R_n$ is a quotient of $\Z G_n$ by an ideal contained in $\Delta_n\Lambda_n$,
with $\phi_n\:\Z G_n\to R_n$ being the quotient map.

Thus we have proved that $\ker\phi_n=\Delta_n\Lambda_n$, and hence the kernel of the composition $\Z\L\xr{p_n}\Z G_n\xr{\phi_n}R_n$ 
equals $W_n+\Delta\Lambda$.
Let $\Gamma_n$ be the augmentation ideal of $\Z Q_n$.
Using the identity $qh-1=(q-1)+(h-1)+(q-1)(h-1)$ it is easy to see that 
$\Delta_n^l\subset\Gamma_n^l+\Delta_n\Lambda_n$ for $l\ge 2$.
Therefore $\phi_n(\Delta_n^l)\subset\Gamma_n^l$ for $l\ge 2$.
Hence $\phi_n p_n(\Delta^l+W_n+\Delta\Lambda)=\phi_n p_n(\Delta^l)\subset\Gamma_n^l$ for $l\ge 2$.
By Lemma \ref{fg-nilpotent} $Q_n$ is nilpotent (as a subgroup of $G_n$) and finitely generated 
(as a quotient of $G_n$); hence by Lemma \ref{dim-subgroup} there exists a $q=q(m,n)\ge 2$ such that 
$(1-Q_n)\cap\Gamma_n^q=0$.
This implies that $\phi_n(1-G_n)\cap\Gamma_n^q=0$.
(Indeed, given any $q\in Q_n$ and $h\in H_n$, if $\phi_n(1-qh)=(1-q,\,1/h)$ lies in $\Gamma_n^q\subset\Z Q_n$, 
then it equals $1-q$ and hence lies in $1-Q_n$.)
Therefore $(1-\L)\cap (\Delta^q+W_n+\Delta\Lambda)\subset\ker(\phi_n p_n)=W_n+\Delta\Lambda$.
On the other hand, by Lemma \ref{habiro-main} $V_{q(n-1)}\subset\Delta^q+W_n$.
Hence $(1-\L)\cap (V_{q(n-1)}+\Delta\Lambda)\subset(1-\L)\cap(\Delta^q+W_n+\Delta\Lambda)\subset W_n+\Delta\Lambda$.

Let $r=q(n-1)-1$.
Suppose that all locally additive type $r$ invariants take the same values on $L,L'\in\L$.
Then $L-L'$ lies in the subgroup of the additive group of $\Z\L$ generated by $1$ and by the ideal 
$V_{r+1}+\Delta\Lambda$.
Since $\epsilon$ vanishes on $L-L'$ and on $V_{r+1}+\Delta\Lambda$, but not on $1$, it follows that 
$L-L'\in V_{r+1}+\Delta\Lambda$.
Arguing like in the previous proof, but now using that $(1-\L)\cap (V_{r+1}+\Delta\Lambda)\subset W_n+\Delta\Lambda$,
we obtain that $L-L'\in W_n+\Delta\Lambda$.
Then $\phi_n p_n(L)=\phi_n p_n(L')$.
But $\phi_n$ is injective on $G_n$ (by the definition).
Hence $p_n(L)=p_n(L')$.
Therefore $L$ is $C_n$-equivalent to $L'$.
\end{proof}

\begin{proof}[Proof of Theorem \ref{locally-additive2}]
By Proposition \ref{cn-eq} and Theorem \ref{locally-additive}.
\end{proof}

\section{Proof of Theorem \ref{rolfsen}} \label{proof-rolfsen}

It does not seem to be easy to prove (or disprove) that for each $n$ there exists an $r=r(n)$ such that if two links in $S^3$ 
are (strongly) $r$-quasi-isotopic, then they are ambient isotopic to the closures of some (strongly) $n$-quasi-isotopic string links%
\footnote{I am indebted to M. Il'insky for pointing out a gap in what I thought was a proof of this assertion.}
(compare \cite{M24-1'}*{Example \ref{hash:nonlocal-knot}}).
But it is easy to prove a slightly weaker assertion.

We say two links in $S^3$ are {\it (strongly) $n$-quasi-isotopic via string links} if they are equivalent with
respect to the equivalence relation generated by ambient isotopy and the relation ``to be closures of (strongly) 
$n$-quasi-isotopic string links''. 

\begin{proposition} \label{string-quasi} (a) If two links in $S^3$ are $n$-quasi-isotopic, $n\ge 1$, then they 
are $(n-1)$-quasi-isotopic via string links.

(b) If two links in $S^3$ are strongly $n$-quasi-isotopic, then they 
are strongly $n$-quasi-isotopic via string links.
\end{proposition}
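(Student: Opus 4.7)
My plan is to decompose the $n$-quasi-isotopy $L_t\colon mS^1\to S^3$ at its singular times $t_1<\dots<t_N$, to realize each singular event locally as a string link quasi-isotopy, and to chain these with ambient isotopies of $S^3$, which are permitted in the equivalence ``via string links''. Between singular times, $L_t$ is an honest link isotopy and hence extends to an ambient isotopy of $S^3$, reducing the task to the construction at each $t_k$. Let $(J_i^{(k)},P_i^{(k)})_{i=0}^n$ denote the $n$-quasi-embedding data at $t_k$ (with $P_i^{(k)}$ replaced by PL balls $B_i^{(k)}$ in part (b)), so that the arcs $J_0^{(k)}\subset\dots\subset J_n^{(k)}$ sit inside a single component $C^{(k)}\subset mS^1$.

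For the local realization around $t_k$, I choose a cutting point on each component of $mS^1$: any point on components other than $C^{(k)}$, and on $C^{(k)}$ a point in $C^{(k)}\setminus J_n^{(k)}$ for part (b) or in $C^{(k)}\setminus J_{n-1}^{(k)}$ for part (a); such points exist because $J_n^{(k)}$ is a proper arc of the circle $C^{(k)}$. Around each cutting point take a small arc $\alpha_i\subset mS^1$. I then build a PL $3$-ball $B^+_k\subset S^3$ as a regular neighborhood of an embedded tree whose vertices are small PL balls around the points $L_{t_k}(\alpha_i)$ and whose edges are arcs in $S^3\setminus L_{t_k}$ joining them, routed inside the PL ball $S^3\setminus B_n^{(k)}$ in part (b) and inside $S^3\setminus P_{n-1}^{(k)}$ in part (a). For $t$ in a sufficiently small neighborhood of $t_k$, continuity of $L_t$ guarantees that $L_t\cap B^+_k$ equals the trivial $m$-string tangle $\bigsqcup L_t(\alpha_i)$, so $L_t|_{mS^1\setminus\bigsqcup\mathrm{int}(\alpha_i)}$ defines a string link in $B^-_k:=S^3\setminus\mathrm{int}(B^+_k)\cong I^3$. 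The quasi-embedding data then transfers essentially verbatim: since the cut lies outside $J_n^{(k)}$ (resp.\ $J_{n-1}^{(k)}$), the arcs $J_i^{(k)}$ remain arcs in the cut-open $C^{(k)}$; and since $B^+_k$ is disjoint from $B_n^{(k)}$ (resp.\ $P_{n-1}^{(k)}$), the balls $B_i^{(k)}$ for $i\le n$ (resp.\ polyhedra $P_i^{(k)}$ for $i\le n-1$) lie inside $B^-_k$, yielding a strong $n$-quasi-embedding (resp.\ an $(n-1)$-quasi-embedding) of the string link at the singular event. I would then chain the $N$ local string link quasi-isotopies with ambient isotopies of $S^3$ realizing both the isotopy portions of $L_t$ and the transitions $B^+_k\leadsto B^+_{k+1}$.

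The main obstacle is the construction of the tree, and hence of $B^+_k$, in part (a). In part (b), $B_n^{(k)}$ is a PL $3$-ball and so its complement in $S^3$ is again a PL $3$-ball which contains all the cutting arc images (for cutting points chosen outside $J_n^{(k)}$), so the tree can be routed freely inside a single PL $3$-ball. In part (a), however, $P_{n-1}^{(k)}$ may be a higher-dimensional polyhedron that separates $S^3$ into chambers which contain the images of different components of $L_{t_k}$, so no $3$-ball disjoint from $P_{n-1}^{(k)}$ can contain all the cutting arcs simultaneously. The loss of one level in part (a) is precisely what breaks this impasse: the null-homotopy $P_{n-1}^{(k)}\cup L_{t_k}(J_{n-1}^{(k)})\to P_n^{(k)}$ provides room inside the buffer $P_n^{(k)}\setminus P_{n-1}^{(k)}$ through which the tree's edges can be routed while remaining disjoint from $L_{t_k}$, and making this routing rigorous (while ensuring the resulting $B^+_k$ really is a PL $3$-ball meeting $L_t$ trivially for all nearby $t$) is where the bulk of the technical work will go.
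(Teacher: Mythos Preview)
Your overall plan matches the paper's: reduce to a single singular time, choose one cutting point per component, build a tree on those points disjoint from the relevant $P_i$ (or $B_i$), and take a regular neighbourhood of the tree as $B^+$, so that its complementary ball contains the quasi-embedding data. Part~(b) is essentially the paper's argument.

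For part~(a), you have located the right difficulty---$P_{n-1}$ may separate $S^3$---but your proposed fix (``route the tree's edges through the buffer $P_n\setminus P_{n-1}$ using the null-homotopy'') is not the mechanism that actually works, and as stated it does not. The tree must connect cutting points lying in $S^3\setminus P_n$ (note: you should take the $C^{(k)}$ cutting point outside $J_n$, not merely outside $J_{n-1}$, so that \emph{all} basepoints lie outside $P_n$), and a null-homotopy of $P_{n-1}$ inside $P_n$ gives you no direct way to push arcs around in the \emph{complement}. The paper's key step is Alexander duality: the null-homotopic inclusion $P_{n-1}\hookrightarrow P_n$ induces the zero map on reduced cohomology, hence by duality $S^3\setminus P_n\hookrightarrow S^3\setminus P_{n-1}$ induces zero on reduced homology; in degree~$0$ this says precisely that any two points of $S^3\setminus P_n$ lie in the same component of $S^3\setminus P_{n-1}$. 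So the tree can be routed entirely inside $S^3\setminus P_{n-1}$ (not through any ``buffer''), and its regular neighbourhood $B^+$ is disjoint from $P_{n-1}\cup f(J_{n-1})$, giving the $(n-1)$-quasi-embedding of the string link. Once you replace your buffer heuristic by this one-line duality argument, the rest of your proof goes through with no further ``bulk of technical work''.
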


\begin{proof}[Proof. (a)] Let $L,L'\:mS^1\to S^3$ be the given links, related by an $n$-quasi-isotopy.
We may assume that the $n$-quasi-isotopy has only one double point.
Let $f\:mS^1\to S^3$ be the corresponding $n$-quasi-embedding.
It suffices to show that $f$ is ambient isotopic to the closure of an $(n-1)$-quasi-embedding $mI\to I^3$
of the string link boundary pattern.
Let $P_0\subset\dots\subset P_n\subset S^3$ and $J_0\subset\dots\subset J_n\subset mS^1$ be given by 
the definition an $n$-quasi-embedding.
Since the inclusion $P_{n-1}\to P_n$ is null-homotopic, it induces zero homomorphisms on the reduced 
cohomology groups.
Hence by the Alexander duality the inclusion $S^3\but P_n\to S^3\but P_{n-1}$ induces zero homomorphisms
on the reduced homology groups.
This implies in particular that any two points in $S^3\but P_n$ are connected by an arc in $S^3\but P_{n-1}$.
Let us recall that $f^{-1}(P_n)$ lies in the arc $J_n$.
Hence each component of $f(mS^1)$ contains some point which lies in $S^3\but P_n$.
Let $f(x_i)$ be such a point in the $i$th component of $f(mS^1)$.
Since $S^3\but P_{n-1}$ is a $3$-manifold, it contains a tree $T$ with leaves at the points $f(x_i)$.
Since $f(mS^1)$ is a $1$-manifold, $T$ may be assumed to meet it only in the leaves.
Let $B$ be a regular neighborhood of $T$, disjoint from $P_{n-1}\cup f(J_{n-1})$ and meeting each component 
of $f(mS^1)$ in an arc.
Then $Q:=\overline{S^3\but B}$ is a ball containing $P_{n-1}\cup f(J_{n-1})$ and such that $f^{-1}(Q)$
meets each component of  $mS^1$ in an arc.
Hence $f$ is ambient isotopic to the closure of an $(n-1)$-quasi-embedding $mI\to I^3$
of the string link boundary pattern.
\end{proof}

\begin{proof}[(b)] This is similar to the proof of (a), if the following observation is used.
Given a strong $n$-quasi-embedding $f\:mS^1\to S^3$ and the balls $B_1\subset\dots\subset B_n\subset S^3$
given by the definition a strong $n$-quasi-embedding, $S^3\but B_n$ is connected (since it is an open ball).
\end{proof}

\begin{remark}
As observed by J. Levine \cite{Le0}*{Proposition 6 (stated slightly differently)}, given a link $L$ in $S^3$, 
the choice of a string link whose closure is ambient isotopic to $L$ is equivalent to the choice of a PL 
$2$-disk $D$ in $S^3$ which meets every component $K_i$ of $L$ transversely in a single point $p_i$.
Let us note that $D$ deformation retracts onto a tree $T$ with leaves $p_1,\dots,p_n$ and with one vertex of valency $m$.
\end{remark}

\begin{lemma} \label{LA-lemma1}
Let $A$ and $B$ be $m$-component links in $S^3$ or string links.
Suppose that $K_A=K_B$ or more generally $K_A$ and $K_B$ are not separated by locally additive type $n$ invariants.
Then $A$ and $B$ are not separated by type $n$ invariants that are well-defined up to PL isotopy
if and only if $A$ and $B$ are not separated by locally additive type $n$ invariants.
\end{lemma}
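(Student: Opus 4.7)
The plan is to combine the construction $\bar v(L) := v(L) - v(K_L)$ used in the proof of Lemma~\ref{LA-lemma0} with the following observation: every totally split $m$-component (string) link is PL isotopic to the trivial (string) link $U_m$. Indeed, by commutativity of the connected sum, any two knots $K_1,K_2$ in $S^3$ (or any two $1$-component string links) satisfy $K_1 \sim K_1\# K_2 = K_2\# K_1 \sim K_2$ under the equivalence relation generated by ambient isotopy and insertion of local knots, so all knots (respectively, $1$-component string links) are PL isotopic. Applying this componentwise, using moves supported in pairwise disjoint balls, every totally split link is PL isotopic to $U_m$. Consequently $v(K_L) = v(U_m)$ for every $L$ whenever $v$ is well-defined up to PL isotopy.

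For the ``only if'' direction, I take an arbitrary locally additive type $n$ invariant $v$ and pass to $\bar v(L) := v(L) - v(K_L)$, which is a type $n$ invariant well-defined up to PL isotopy, exactly as verified in the proof of Lemma~\ref{LA-lemma0}. Applying the hypothesis of this direction to $\bar v$ gives $\bar v(A) = \bar v(B)$, which rearranges to $v(A) - v(B) = v(K_A) - v(K_B)$. The assumption that $K_A$ and $K_B$ are not separated by locally additive type $n$ invariants forces $v(K_A) = v(K_B)$, and hence $v(A) = v(B)$.

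For the ``if'' direction, I take an arbitrary type $n$ invariant $v$ well-defined up to PL isotopy, and set $v^\circ(L) := v(L) - v(U_m)$, which is plainly of type $n$. For any totally split $K$ the PL isotopy invariance of $v$ yields $v^\circ(L\# K) = v(L) - v(U_m) = v^\circ(L)$, while $v^\circ(K) = v(K) - v(U_m) = 0$ because $K$ is PL isotopic to $U_m$ by the observation above. Consequently $v^\circ(L\# K) = v^\circ(L) + v^\circ(K)$, so $v^\circ$ is a locally additive type $n$ invariant. The hypothesis of this direction then gives $v^\circ(A) = v^\circ(B)$, whence $v(A) = v(B)$.

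The argument is essentially bookkeeping once the totally-split-to-trivial PL isotopy observation is in hand, and no step constitutes a real obstacle; the one place requiring care is verifying that $v^\circ$ is \emph{locally additive} in the strict sense of the paper (i.e.\ satisfies $v^\circ(L\# K) = v^\circ(L) + v^\circ(K)$, not merely the weaker PL isotopy invariance $v^\circ(L\# K) = v^\circ(L)$), which is precisely where one uses that $v^\circ$ vanishes on all totally split links rather than just on the trivial one. Note that the hypothesis on $K_A$ and $K_B$ is used only in the ``only if'' direction; in the ``if'' direction the totally split link $K_L$ carries no information about $L$ because $v$ is blind to PL isotopy.
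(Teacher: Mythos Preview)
Your proof is correct and follows essentially the same approach as the paper, which simply says ``Similarly to the proof of Lemma~\ref{LA-lemma0}.'' Your $\bar v$ construction for the ``only if'' direction is exactly the one from Lemma~\ref{LA-lemma0}, with the hypothesis on $K_A$, $K_B$ replacing the $A'=A\#K_B$, $B'=B\#K_A$ trick; and your $v^\circ$ construction for the ``if'' direction is a careful spelling-out of the paper's terse claim that PL-isotopy-invariant invariants are (after normalization) locally additive.
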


\begin{proof} Similarly to the proof of Lemma \ref{LA-lemma0}.
\end{proof}

\begin{lemma} \label{LA-lemma2}
Let $A$ and $B$ be $m$-component links in $S^3$, and let $A'=A\#K_B$ and $B'=B\#K_A$
(which are PL isotopic to $A$ and $B$ respectively).
Then $A$ and $B$ are not separated by type $n$ invariants of string links that are well-defined up to PL isotopy
if and only if $A'$ and $B'$ are not separated by locally additive type $n$ invariants of string links.
\end{lemma}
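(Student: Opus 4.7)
The plan is to mirror Lemma \ref{LA-lemma1}, lifted from the pointwise separation notion to the equivalence relations on links generated by closures of non-separated string links. The crucial observation is $K_{A'}=K_{B'}$: the $i$-th components of $A'=A\# K_B$ and $B'=B\# K_A$ are $A_i\# B_i$ and $B_i\# A_i$ respectively, and these coincide as knots by commutativity of the connected sum.

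For the direction $(\Leftarrow)$, I argue that the equivalence on links generated by closures of locally-additive-not-separated string links refines the one generated via invariants well-defined up to PL isotopy. For any type $n$ string-link invariant $v$ well-defined up to PL isotopy, the normalized invariant $v-v(U)$ (with $U$ the trivial string link) is locally additive, because every totally split string link is PL isotopic to $U$. Hence locally-additive non-separation of string links implies non-separation by invariants well-defined up to PL isotopy. Since ordinary PL isotopy of links is in turn contained in the latter equivalence (each local knot insertion lifts to a PL isotopy of string-link representatives, which preserves every such invariant), splicing $A$ via PL isotopy to $A'$, then via the hypothesized locally-additive equivalence to $B'$, then via PL isotopy to $B$ closes this direction.

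For the direction $(\Rightarrow)$, I work with a chain $A=L_0,\dots,L_k=B$ witnessing the hypothesis. At each string-link step $L_i=\hat\lambda_i\to L_{i+1}=\hat\mu_i$, I would choose totally split representatives $K_{\mu_i}^{str}$ and $K_{\lambda_i}^{str}$ whose $j$-th components are the long knots $\mu_{i,j}$ and $\lambda_{i,j}$. The enhanced string links $\xi_i=\lambda_i\# K_{\mu_i}^{str}$ and $\eta_i=\mu_i\# K_{\lambda_i}^{str}$ then have equal totally split representatives (by commutativity of connected sum of long knots) and remain not separated by invariants well-defined up to PL isotopy; Lemma \ref{LA-lemma1} at the string-link level makes them not separated by locally additive invariants, so their closures $L_i\# K_{L_{i+1}}$ and $L_{i+1}\# K_{L_i}$ are equivalent in one closure-step. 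Splicing these (using preservation of the equivalence under connect-sum with totally split links) yields a chain from $A'\# N$ to $B'\# N$ with $N=K_{L_1}\#\cdots\# K_{L_{k-1}}$. The main obstacle is then to strip off $N$: since PL isotopy of links is not contained in the locally-additive link equivalence, one cannot simply shrink $N$. The plan is to exploit $K_{A'\# N}=K_{B'\# N}$ and to coordinate the string-link representatives so that the $N^{str}$ summand factors coherently at each step of the chain and can be peeled off while preserving locally-additive non-separation. This bookkeeping is the crux of the argument.
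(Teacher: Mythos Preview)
Your ``if'' direction is correct and essentially matches the paper's argument.

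The ``only if'' direction has a genuine gap at precisely the point you flag as ``the crux''. Your chain does yield $A'\#N \sim B'\#N$ in the locally-additive equivalence, but the proposed peeling of $N^{\mathrm{str}}$ cannot work. At an intermediate step the totally split part you have attached to $\lambda_i$ is $\prod_{j\neq i}K_{L_j}^{\mathrm{str}}$, and this is \emph{not} of the form $X_i\#N^{\mathrm{str}}$ for a fixed $N^{\mathrm{str}}$ when $0<i<k$: compared with $N^{\mathrm{str}}=\prod_{j=1}^{k-1}K_{L_j}^{\mathrm{str}}$, the factor $K_{L_i}^{\mathrm{str}}$ has been traded for $K_{L_0}^{\mathrm{str}}\#K_{L_k}^{\mathrm{str}}$. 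So there is no common summand to factor out coherently along the chain. More fundamentally, stripping $N$ is a cancellation statement in the monoid of totally split links, and nothing in your outline supplies an inverse; no amount of bookkeeping with representatives will manufacture one.

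The paper's resolution is to invoke Gusarov's group of $C_{n+1}$-equivalence classes of knots. Rather than letting local knots accumulate, one adds at each string link $L_i$ a totally split correction $\Lambda_i$ representing $[K_{L_i}]^{-1}[K_A\#K_B]$ in this group (with $\Lambda_1=K_{L_{2k}}$ and $\Lambda_{2k}=K_{L_1}$ at the endpoints, in the paper's indexing). Then every $K_{L_i\#\Lambda_i}$ is $C_{n+1}$-equivalent to the fixed link $K_A\#K_B$, hence by Proposition~\ref{cn-eq} all of them agree under locally additive type~$n$ invariants, and Lemma~\ref{LA-lemma1} applies to each consecutive pair $L_{2i-1}\#\Lambda_{2i-1},\,L_{2i}\#\Lambda_{2i}$. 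The closures at the endpoints are exactly $A'$ and $B'$ with no residual $N$ to remove. The existence of inverses modulo $C_{n+1}$ is the missing ingredient in your argument; the same device also underlies Lemma~\ref{destabilization}, which is the precise ``strip off $K$'' statement you are reaching for.
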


\begin{proof} {\it ``If''.} Since every invariant of PL isotopy is locally additive, the hypothesis implies that
$A'$ and $B'$ are not separated by type $n$ invariants of string links that are well-defined up to PL isotopy.
On the other hand, let $\alpha$ and $\beta$ be some string links whose closures are $A$ and $B$, respectively.
Then $\alpha$ is PL isotopic to $\alpha':=\alpha\# K_\beta$, whose closure is $A'$.
Hence $A$ and $A'$ are not separated by type $n$ invariants of string links that are well-defined up to PL isotopy.
Similarly for $B'$ and $B$.

{\it ``Only if''.}
We are given string links $L_1,\dots,L_{2k}$ such that each $L_{2i-1}$ is not separated from $L_{2i}$
by type $n$ invariants, well-defined up to PL isotopy; the closures of $L_{2i}$ and $L_{2i+1}$ are ambient isotopic whenever both are defined; 
and the closures of $L_1$, $L_{2k}$ are ambient isotopic to $A$ and $B$, respectively.
Let us note that $K_{L_{2i}}=K_{L_{2i+1}}$ since these two string links are totally split and their closures are ambient isotopic.
Let $\Lambda_1=K_{L_{2k}}$ and $\Lambda_{2k}=K_{L_1}$.
Then the closures of $L_1\#\Lambda_1$ and $L_{2k}\#\Lambda_{2k}$ are ambient isotopic to $A'$ and $B'$, respectively.
For $i=1,\dots,k-1$ let $\Lambda_{2i}$ be a totally split string link representing $[K_{L_{2i}}]^{-1}[K_{L_1}\#K_{L_{2k}}]$
in the direct product of $m$ copies of Gusarov's group of $C_{n+1}$-equivalence classes of knots \cite{Gu1} (see also \cite{Hab}),
and let $\Lambda_{2i+1}=\Lambda_{2i}$.
Then the closures of $L_{2i}\#\Lambda_{2i}$ and $L_{2i+1}\#\Lambda_{2i+1}$ are ambient isotopic whenever both are defined,
and $K_{L_{2i}}\#\Lambda_{2i}$ and $K_{L_{2i+1}}\#\Lambda_{2i+1}$ are ambient isotopic themselves.
Also $K_{L_i}\#\Lambda_i$ is $C_{n+1}$-equivalent to $K_{L_1}\#K_{L_{2k}}$, and hence (see Proposition \ref{cn-eq})
is not separated from it by type $n$ invariants.
Then by Lemma \ref{LA-lemma1} each $L_{2i-1}\#\Lambda_{2i-1}$ is not separated from $L_{2i}\#\Lambda_{2i}$ by locally additive
type $n$ invariants.
Writing $L'_i=L_i\#\Lambda_i$, the sequence of string links $L'_1,\dots,L'_{2k}$ shows that $A'$ and $B'$ are not separated 
by locally additive type $n$ invariants of string links.
\end{proof}

\begin{proof}[Proof of Theorem \ref{rolfsen}] 
The previous results can be seen to yield the implications (1)--(9) shown in Figure \ref{implications} for every links $A$, $B$ in $S^3$,
where $A'=A\#K_B$ and $B'=B\#K_A$.

In more detail, the implication (1) holds by Theorem \ref{isotopy}; (2) is obvious; (3) holds by Proposition \ref{string-quasi};
(4) holds by Theorem \ref{2.2} (the case of string links); and (5) is obvious.
The equivalence (6) holds by Lemma \ref{LA-lemma2}; and (7) follows from Theorem \ref{locally-additive2}.
The implication (8) holds by Lemma \ref{fti-closure}; and (9) is obvious.
\end{proof}

\begin{figure}[h]
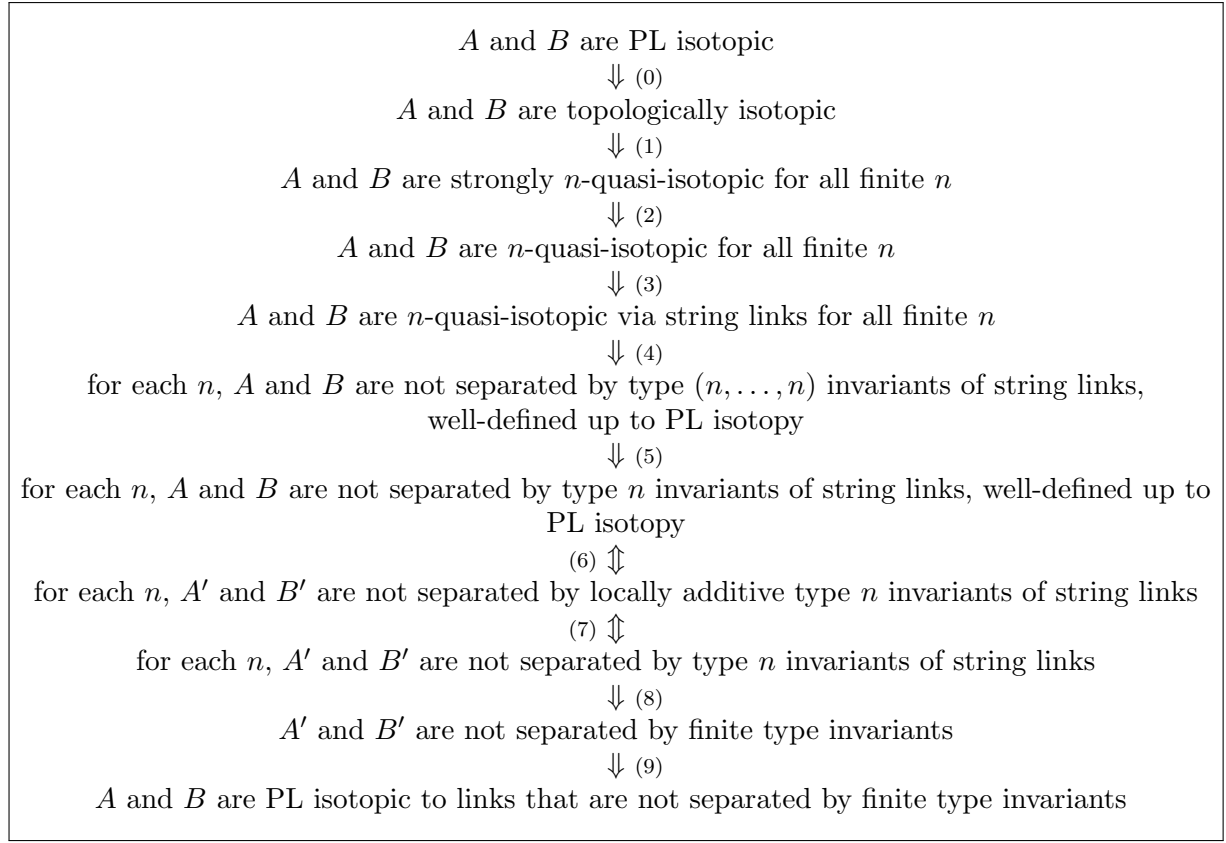

\noindent\fbox{\parbox{\textwidth}{\small\rm
\begin{center}
\medskip

$A$ and $B$ are PL isotopic

\phantom{\tiny (0)} $\Downarrow$ {\tiny (0)}

$A$ and $B$ are topologically isotopic

\phantom{\tiny (1)} $\Downarrow$ {\tiny (1)}

$A$ and $B$ are strongly $n$-quasi-isotopic for all finite $n$

\phantom{\tiny (2)} $\Downarrow$ {\tiny (2)}

$A$ and $B$ are $n$-quasi-isotopic for all finite $n$

\phantom{\tiny (3)} $\Downarrow$ {\tiny (3)}

$A$ and $B$ are $n$-quasi-isotopic via string links for all finite $n$

\phantom{\tiny (4)} $\Downarrow$ {\tiny (4)}

for each $n$, $A$ and $B$ are not separated by type $(n,\dots,n)$ invariants of string links, well-defined up to PL isotopy

\phantom{\tiny (5)} $\Downarrow$ {\tiny (5)}

for each $n$, $A$ and $B$ are not separated by type $n$ invariants of string links, well-defined up to PL isotopy

{\tiny (6)} $\Updownarrow$ \phantom{\tiny (6)}

for each $n$, $A'$ and $B'$ are not separated by locally additive type $n$ invariants of string links

{\tiny (7)} $\Updownarrow$ \phantom{\tiny (7)}

for each $n$, $A'$ and $B'$ are not separated by type $n$ invariants of string links

\phantom{\tiny (8)} $\Downarrow$ {\tiny (8)}

$A'$ and $B'$ are not separated by finite type invariants

\phantom{\tiny (9)} $\Downarrow$ {\tiny (9)}

$A$ and $B$ are PL isotopic to links that are not separated by finite type invariants
\medskip

\end{center}
}}
\smallskip

\caption{Some implications, where $A'=A\#K_B$ and $B'=B\#K_A$}
\label{implications}
\end{figure}

\section{Proof of Theorem \ref{main2}} \label{proofs2}

\begin{theorem}[Rolfsen \cite{Ro0}] \label{rolfsen-lemma0} If $L$ and $L'$ are links in $S^3$ which are PL isotopic,
and $K_L=K_{L'}$ up to an ambient isotopy, then $L$ and $L'$ are ambient isotopic.
\end{theorem}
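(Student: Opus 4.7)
The plan is to build on the characterization of PL isotopy from Rolfsen \cite{Ro2}*{Theorem 4.2}, cited in the introduction: PL isotopy is the equivalence relation on PL links in $S^3$ generated by ambient isotopy and insertion of local knots. Since this is an equivalence relation, removal of local knots is also a legal move.

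First, by induction on the length of a PL isotopy sequence $L = L_0, L_1, \ldots, L_k = L'$, I would establish the following normal form: there exist totally split links $N$ and $N'$ such that $L \# N$ is ambient isotopic to $L' \# N'$, where $\#$ denotes componentwise connect sum. The base case is trivial (take $N = N'$ to be the unlink); each insertion or removal step in the sequence contributes its local knot factor either to $N$ or to $N'$, using the associativity and commutativity of connect sum of local knots within a single component.

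Next, since ambient isotopy preserves the componentwise knot type, $J_i \# N_i$ equals $J_i \# N_i'$ for each $i$, where $J_i$ is the common knot type of the $i$th components of $L$ and $L'$ given by the hypothesis $K_L = K_{L'}$. Schubert's cancellation theorem for the knot monoid under connect sum then gives that $N_i$ is ambient isotopic to $N_i'$, and since $N$ and $N'$ are both totally split, they are themselves ambient isotopic; so after relabeling, $L \# N$ is ambient isotopic to $L' \# N$.

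The remaining task---and the main obstacle---is to pass from $L \# N$ ambient isotopic to $L' \# N$ back to $L$ ambient isotopic to $L'$; this is a cancellation statement for componentwise local connect sum of links. The geometric idea is that the given ambient isotopy of $S^3$ carries the canonical $N$-balls of $L \# N$ (disjoint balls realizing the local knot summands) to balls in $L' \# N$ each meeting the link in a local knot of the correct type on the correct component. Two such local knot balls on the same component of a fixed link can be interchanged by an ambient isotopy of the pair $(S^3, L' \# N)$ by sliding one along the component to the other and then applying a self-homeomorphism of the ball realizing the knot-type equivalence. Composing with such realignments produces a self-homeomorphism of $S^3$ which sends each canonical $N$-ball of $L \# N$ onto the corresponding $N$-ball of $L' \# N$ with matching boundary values on the link arcs. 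Replacing this map inside each $N$-ball by any orientation-preserving homeomorphism of the ball (with the prescribed boundary) which sends the unknotted arc piece of $L$ to the unknotted arc piece of $L'$---such a homeomorphism exists by the uniqueness up to ambient isotopy of an unknotted spanning arc in a $3$-ball with prescribed endpoints---yields an orientation-preserving self-homeomorphism of $S^3$ carrying $L$ to $L'$, and hence an ambient isotopy of $L$ to $L'$.
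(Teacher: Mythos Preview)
Your overall strategy (normal form $L\#N\simeq L'\#N'$, then $N\simeq N'$ by Schubert, then cancel $N$) is a legitimate alternative to the paper's route. The paper instead proves a unique decomposition $L=\Gamma(L)\#\Lambda(L)$ into a link with no local knots and a totally split link (Theorem~\ref{global}, derived from Hashizume's prime factorization theorem), and Rolfsen's statement drops out immediately: PL isotopy preserves $\Gamma$, and $K_L=K_{L'}$ together with Schubert forces $\Lambda(L)=\Lambda(L')$.

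The gap in your argument is the cancellation step. You assert that two balls $h(B_i)$ and $B_i'$, each meeting $L'\#N$ in an arc of knot type $N_i$ on component~$i$, can be interchanged by an ambient isotopy of the pair $(S^3, L'\#N)$ ``by sliding one along the component''. But a ball containing a genuinely knotted arc cannot be shrunk into a thin tubular neighborhood of that component by an isotopy preserving $L'\#N$: any arc of the core of such a solid torus is unknotted as a ball--arc pair, so no sub-ball of the tube can carry the local knot. The sliding picture works for balls meeting the link in a \emph{trivial} arc, which is exactly the situation one has \emph{after} the local knot has been excised---not before. What you actually need is that the two ways of excising a local $N_i$ from $L'\#N$ (via $h(B_i)$ or via the canonical $B_i'$) yield ambient isotopic links; equivalently, that the decomposing $2$-sphere for a given local knot summand is unique up to pair isotopy. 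That statement is true, but it is precisely (a special case of) the uniqueness half of Hashizume's theorem, proved in the paper by the innermost-circle argument of Lemma~\ref{punctured spheres}. So your step~4, once made rigorous, converges on the same technical core as the paper's proof; the ``sliding'' sketch does not bypass it.
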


Rolfsen's proof is based on the prime factorization of knots and some additional geometric constructions.
An alternative proof of Theorem \ref{rolfsen-lemma0} is given in \cite{M24-1'}; namely, it is noted there that 
Theorem \ref{rolfsen-lemma0} is an easy consequence of the prime factorization of links, whose new short proof 
is also included in \cite{M24-1'}.
Let us note that the assertion of Theorem \ref{rolfsen-lemma0} fails for links in $S^1\x S^2$ \cite{Ro1}*{Example 2}.

\begin{lemma} \label{destabilization}
Let $A$ and $B$ be $m$-component links in $S^3$ and $K$ be a totally split $m$-component link in $S^3$.
Then $A$ and $B$ are not separated by locally additive type $n$ invariants of string links if and only if $A\# K$ and $B\# K$ are not separated 
by locally additive type $n$ invariants of string links.
\end{lemma}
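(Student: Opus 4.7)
The plan is to establish both directions by modifying chains of string links that witness the equivalence relation defining ``not separated by locally additive type $n$ invariants of string links''. Fix a totally split $m$-component string link $\kappa$ whose closure is $K$.

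For the forward direction, I would take any chain witnessing that $A$ and $B$ are not separated and right-connect-sum every string link $\gamma$ in the chain with $\kappa$. Because $\kappa$ is totally split, the closure of $\gamma\#\kappa$ is the componentwise connected sum of the closure of $\gamma$ with $K$, so the modified chain has endpoints $A\#K$ and $B\#K$ and preserves the ambient-isotopy relations between consecutive closures. For any locally additive type $n$ invariant $v$, local additivity yields $v(\gamma\#\kappa)=v(\gamma)+v(\kappa)$, so pairs that agreed on all such $v$ continue to agree after the connect sum.

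The backward direction is the interesting one, and I would approach it by introducing a ``local $C_{n+1}$-inverse'' to $\kappa$. Using the group structure on Gusarov's group of $C_{n+1}$-equivalence classes of knots (as invoked in the proof of Lemma~\ref{LA-lemma2}), I choose a totally split string link $\lambda$ whose $i$th component is a $C_{n+1}$-inverse to the $i$th component of $\kappa$; since $\kappa$ and $\lambda$ are both totally split, the required $C_{n+1}$-moves can be performed inside the separating balls, so $\kappa\#\lambda$ is $C_{n+1}$-equivalent to the trivial string link. Letting $L$ denote the closure of $\lambda$, the forward direction applied to $A\#K,\,B\#K$ with $\lambda$ in place of $\kappa$ gives that $A\#K\#L$ and $B\#K\#L$ are not separated by locally additive type $n$ invariants of string links.

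The final step is to show that $A$ and $A\#K\#L$ (and likewise $B$ and $B\#K\#L$) are related by a single step of the equivalence relation. For any string link $\alpha$ with closure $A$, the closure of $\alpha\#\kappa\#\lambda$ is $A\#K\#L$, and for every locally additive type $n$ invariant $v$ one computes, using local additivity and Proposition~\ref{cn-eq} applied to the $C_{n+1}$-equivalent pair $\kappa\#\lambda$ and the trivial string link, $v(\alpha\#\kappa\#\lambda)=v(\alpha)+v(\kappa\#\lambda)=v(\alpha)+v(\mathrm{trivial})=v(\alpha)$, where $v(\mathrm{trivial})=0$ is forced by local additivity on $\mathrm{trivial}\#\mathrm{trivial}$. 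Concatenating then gives $A\sim A\#K\#L\sim B\#K\#L\sim B$. The main obstacle in the plan is the construction of $\lambda$: one must be sure that a componentwise $C_{n+1}$-inverse exists (using Gusarov's group) and that $\kappa\#\lambda$ is globally $C_{n+1}$-equivalent to the trivial string link, so that Proposition~\ref{cn-eq} applies uniformly to every locally additive type $n$ invariant.
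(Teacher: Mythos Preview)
Your proof is correct and follows essentially the same approach as the paper's. The only cosmetic difference is in the final step of the backward direction: the paper observes that $\alpha$ and $\alpha\#\kappa\#\lambda$ are themselves $C_{n+1}$-equivalent (since $\kappa\#\lambda$ is $C_{n+1}$-equivalent to the string unlink) and hence agree on all type $n$ invariants by Proposition~\ref{cn-eq}, whereas you first split off $v(\kappa\#\lambda)$ via local additivity and then apply Proposition~\ref{cn-eq} to $\kappa\#\lambda$ alone; both routes yield the required relation $A\sim A\#K\#L$.
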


\begin{proof} {\it ``Only if''.}
We are given string links $L_1,\dots,L_{2k}$ such that the closures of $L_1$, $L_{2k}$ are ambient isotopic to $A$ and $B$, respectively;
the closures of $L_{2i}$ and $L_{2i+1}$ are ambient isotopic whenever both are defined; and each $L_{2i-1}$ is not separated from $L_{2i}$ 
by locally additive type $n$ invariants.
Let $\Lambda$ be a totally split string link whose closure is ambient isotopic to $K$, and let $L_i'=L_i\#\Lambda$.
Then the sequence of string links $L_1',\dots,L_{2k}'$ shows that $A\#K$ and $B\#K$ are not separated by locally additive type $n$ 
invariants of string links (it is here that we need the local additivity).

{\it ``If''.} Let $K'$ be a totally split string link representing $[K]^{-1}$ in the direct product of $m$ copies of Gusarov's group 
of $C_{n+1}$-equivalence classes of string knots \cite{Gu1} (see also \cite{Hab}).
Then by the ``only if'' part $A\#K\#K'$ and $B\#K\#K'$ are not separated by locally additive type $n$ invariants of string links.
Let $L$ be any string link whose closure is ambient isotopic to $A$.
Let $\Lambda$ and $\Lambda'$ be totally split string links whose closures are ambient isotopic to $K$ and $K'$, respectively.
Then the closure of $L\#\Lambda\#\Lambda'$ is ambient isotopic to $A\#K\#K'$.
On the other hand, $\Lambda\#\Lambda'$ is $C_{n+1}$-eqivalent to the string unlink.
Hence $L\#\Lambda\#\Lambda'$ is $C_{n+1}$-equivalent to $L$, and consequently (see Proposition \ref{cn-eq})
they are not separated by type $n$ invariants.
Thus $A$ and $A\#K\#K'$ are not separated by type $n$ invariants of string links.
Similarly, $B\#K\#K'$ and $B$ are not separated by type $n$ invariants of string links.
\end{proof}

\begin{proof}[Proof of Theorem \ref{main2}] 
The implications (1)--(9) in Figure \ref{implications} were established in the proof of Theorem \ref{rolfsen}, whereas 
the implication (0) is obvious.

Let (X) denote the converse to the composite of the implications (0)--(9) (i.e.\ {\it ``if $A$ and $B$ are PL isotopic to links
which are not separated by finite type invariants, then $A$ and $B$ are PL isotopic''}).
Then (X) clearly follows from \ref{habiro}, and by Theorem \ref{rolfsen-lemma0}, (X)$\land$\ref{vassiliev} implies \ref{habiro}.

Let (Y) denote the converse to the composite of the implications (8) and (9) (i.e.\ {\it ``if $A$ and $B$ are PL isotopic to links that are not 
separated by finite type invariants, then for each $n$, $A'$ and $B'$ are not separated by type $n$ invariants of string links''}).

Let us show that (Y)$\land$\ref{vassiliev} implies \ref{habegger-meilhan}.
Suppose that $A$ and $B$ are not separated by finite type invariants.
Then by \ref{vassiliev} $K_A=K_B$, and by (Y) for each $n$, $A'$ and $B'$ are not separated by type $n$ invariants of string links.
We have $A'=A\#K$ and $B'=B\#K$, where $K=K_A=K_B$, so by Lemma \ref{destabilization} for each $n$, $A$ and $B$ are not separated by 
locally additive type $n$ invariants of string links.
Then by Theorem \ref{locally-additive2} for each $n$, $A$ and $B$ are not separated by type $n$ invariants of string links.

Let us show that \ref{habegger-meilhan}$\land$\ref{vassiliev} implies (Y).
Suppose that $A$, $B$ are PL isotopic to links $\bar A$, $\bar B$ (respectively), which are not separated by finite type invariants.
Then by \ref{vassiliev} $K_{\bar A}=K_{\bar B}$, and by \ref{habegger-meilhan} for each $n$, 
$\bar A$ and $\bar B$ are not separated by type $n$ invariants of string links.
The link $A'$ is PL isotopic to $A$ and hence to $\bar A$, and similarly $B'$ is PL isotopic to $\bar B$.
Therefore $\bar A\#Q$ is ambient isotopic to $A'\#Q'$ and $\bar B\#R$ is ambient isotopic to $B'\#R'$
for some totally split links $Q$, $Q'$, $R$ and $R'$.
Then $\bar A\#Q\#R$ is ambient isotopic to $A'\#Q'\#R$ and $\bar B\#Q\#R$ is ambient isotopic to $B'\#Q\#R'$.
Since $K_{\bar A}=K_{\bar B}$, we have $K_{\bar A}\#Q\#R=K_{\bar B}\#Q\#R$, and therefore also
$K_{A'}\#Q'\#R=K_{B'}\#Q\#R'$.
But on the other hand, we have $K_{A'}=K_A\#K_B=K_{B'}$.
Hence by the uniqueness of factorization into prime knots, $Q'\#R=Q\#R'$.
Thus, writing $S=Q\#R$ and $S'=Q'\#R=Q\#R'$, we get that $\bar A\# S=A'\#S'$ and $\bar B\#S=B'\#S'$.
Since for each $n$, $\bar A$ and $\bar B$ are not separated by type $n$ invariants of string links, 
it follows by a double application of Lemma \ref{destabilization} that for each $n$, 
$A'$ and $B'$ are not separated by additive type $n$ invariants of string links.
Then by Theorem \ref{locally-additive2} for each $n$, $A'$ and $B'$ are not separated by type $n$ invariants of string links.

Let (Z) denote the converse to the composite of the implications (0)--(5) (i.e.\ {\it ``if $A$ and $B$ can be represented for each $n$ 
as closures of string links that are not separated by type $n$ invariants, well-defined up to PL isotopy, then $A$ and $B$ are 
PL isotopic''}).
Then we get that 
\ref{habiro}$\Leftrightarrow$\ref{vassiliev}$\land$(X)$\Leftrightarrow$\ref{vassiliev}$\land$(Y)$\land$(Z)$\Leftrightarrow$
\ref{vassiliev}$\land$\ref{habegger-meilhan}$\land$(Z).
\end{proof}

\section{Proof of Theorem \ref{string+rational}} \label{proofs3}

The following string link version of Theorem \ref{rolfsen-lemma0} is proved in \cite{M24-1'}.

\begin{theorem} \cite{M24-1'} \label{rolfsen-lemma0'} If $L$ and $L'$ are string links which are PL isotopic,
and $K_L=K_{L'}$ up to ambient isotopy, then $L$ and $L'$ are ambient isotopic.
\end{theorem}

The following more precise version of Lemma \ref{LA-lemma0} is proved by the same argument.

\begin{lemma} \label{LA-lemma3}
Let $A$ and $B$ be $m$-component links in $S^3$ or string links.
Then $A$ and $B$ are not separated by type $n$ invariants that are well-defined up to PL isotopy
if and only if $A':=A\#K_B$ and $B':=B\#K_A$ are not separated by locally additive type $n$ invariants.
\end{lemma}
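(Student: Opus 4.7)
The plan is to run essentially the same argument as in the proof of Lemma~\ref{LA-lemma0}, noting that its ``only if'' direction already produced $A'=A\#K_B$ and $B'=B\#K_A$ as explicit PL-isotopic witnesses, so the present statement is really a bookkeeping refinement that simply names these witnesses.

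For the ``only if'' direction I would take any locally additive type $n$ invariant $v$ and form the renormalized invariant $\bar v(L):=v(L)-v(K_L)$. The function $L\mapsto v(K_L)$ is of type $n$ by two applications of Lemma~\ref{knot-link} together with the identity $v(K_L)=\sum_i (v^\kappa_i)^\lambda_i(L)$ (which holds because $v$ is locally additive and $K_L$ is the componentwise connect-sum of the one-nontrivial-component totally split links), so $\bar v$ is a type $n$ invariant. The key check is that $\bar v$ is invariant under PL isotopy: ambient isotopy preserves both $v$ and the ambient isotopy class of $K_L$, while inserting a totally split link $K$ into $L$ replaces both $L$ and $K_L$ by their componentwise connect-sums with $K$, and the two resulting $v(K)$ terms cancel. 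Then $\bar v(A')=\bar v(A)=\bar v(B)=\bar v(B')$ by hypothesis, and since $K_{A'}=K_A\#K_B=K_{B'}$, adding $v(K_{A'})=v(K_{B'})$ back to both sides yields $v(A')=v(B')$.

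For the ``if'' direction I would take a type $n$ invariant $w$ well-defined up to PL isotopy. Every totally split $m$-component (string) link is PL isotopic to the trivial (string) link $U$ (by inserting the required local knots into the components of $U$), so $w$ is constant on totally split (string) links and the renormalized invariant $w_0:=w-w(U)$ vanishes on all of them. For any $L$ and any totally split $K$, the (string) link $L\#K$ is PL isotopic to $L$, so $w_0(L\#K)=w_0(L)=w_0(L)+w_0(K)$; hence $w_0$ is a locally additive type $n$ invariant. By hypothesis $w_0(A')=w_0(B')$, and combining this with $w(A)=w(A')$ and $w(B)=w(B')$ (PL isotopy invariance of $w$) yields $w(A)=w(B)$.

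I do not expect any real obstacle: the argument is essentially a bookkeeping refinement of Lemma~\ref{LA-lemma0}. The only subtlety worth highlighting is the symmetric identity $K_{A'}=K_A\#K_B=K_{B'}$, which is precisely what singles out the specific choice $A'=A\#K_B$, $B'=B\#K_A$ among all possible PL-isotopic representatives of $A$ and $B$. The argument is uniform for links in $S^3$ and for string links, since $K_L$, componentwise connect-sum, local additivity and ``insertion of a local knot is a PL isotopy'' are all defined identically in both settings, and Lemma~\ref{knot-link} has an obvious string-link analogue.
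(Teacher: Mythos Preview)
Your proof is correct and follows essentially the same argument as the paper, which simply says that Lemma~\ref{LA-lemma3} ``is proved by the same argument'' as Lemma~\ref{LA-lemma0}. In fact you are slightly more careful than the paper in the ``if'' direction: the paper's one-line justification ``if an invariant is well-defined up to PL isotopy, then it is locally additive'' is literally false when $w(U)\ne 0$, and your passage to $w_0=w-w(U)$ is exactly the small correction needed.
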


Let us reformulate part (b) of Theorem \ref{string+rational} in more detail:

\begin{theorem}\label{string} 
Finite type invariants separate string links if and only if finite type invariants separate string knots (or equivalently knots in $S^3$)
and finite type invariants, well-defined up to PL isotopy, separate PL isotopy classes of string links.
\end{theorem}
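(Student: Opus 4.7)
The plan is to establish each direction of the equivalence by assembling Lemma \ref{LA-lemma3}, Theorem \ref{locally-additive2}, Lemma \ref{knot-link}, and the string-link version of Rolfsen's theorem (Corollary \ref{rolfsen-lemma}).

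For the forward direction, first note that string knots are $1$-component string links, so if finite type invariants separate string links they a fortiori separate string knots. For the remaining conjunct, suppose $A, B$ are string links not separated by any type $n$ invariant well-defined up to PL isotopy. Set $A' = A\# K_B$ and $B' = B\# K_A$; these are PL isotopic to $A$ and $B$ respectively since $K_A$ and $K_B$ are totally split. By Lemma \ref{LA-lemma3}, $A'$ and $B'$ are not separated by any locally additive type $n$ invariant, for every $n$. Invoking the contrapositive of Theorem \ref{locally-additive2} (with $m$ the number of components), $A'$ and $B'$ are not separated by any finite type invariant at all. The hypothesis that finite type invariants separate string links then forces $A' = B'$ up to ambient isotopy, whence $A$ is PL isotopic to $B$.

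For the reverse direction, let $A, B$ be string links not separated by any finite type invariant. Being in particular not separated by any finite type invariant well-defined up to PL isotopy, by hypothesis $A$ and $B$ are PL isotopic. Moreover, by the string link analog of Lemma \ref{knot-link}(b) --- namely, that restricting a finite type invariant of string knots to the $i$-th component yields a finite type invariant of $m$-component string links --- each pair of corresponding components of $A$ and $B$ is not separated by any finite type invariant of string knots. By the Vassiliev conjecture for string knots (the second hypothesis), each such pair is ambient isotopic as string knots. Hence the totally split string links $K_A$ and $K_B$, which are determined by the ambient isotopy types of their components, are themselves ambient isotopic. Corollary \ref{rolfsen-lemma} now applies: $A$ and $B$ are PL isotopic with $K_A = K_B$, so they are ambient isotopic.

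Neither direction is a serious obstacle once the prior machinery is in place; the proof is essentially a matter of arranging previous results in the correct order. The only step requiring more than routine verification is the application of Theorem \ref{locally-additive2} in the forward direction, which converts information about invariants well-defined up to PL isotopy (equivalently, locally additive invariants after the $\#K_B$, $\#K_A$ adjustment) into information about arbitrary finite type invariants; this is precisely the string-link analogue of the Gusarov--Stanford--Habiro theorem, and its proof has already been carried out in \S\ref{cn-equivalence}. The fact that Corollary \ref{rolfsen-lemma} holds for string links (via Theorem \ref{global}(b)) is also critical, as it is precisely the string-link statement where the proof for links in $S^3$ does not automatically transfer (cf.\ Example \ref{nonlocal-knot}).
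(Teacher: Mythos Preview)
Your proof is correct and follows essentially the same route as the paper's: the forward direction via Lemma \ref{LA-lemma3} and Theorem \ref{locally-additive2}, the reverse direction via Corollary \ref{rolfsen-lemma}. One trivial slip: the string-link analog you invoke in the reverse direction is that of Lemma \ref{knot-link}(a), not (b) --- you want to go from a type $n$ invariant of string knots to a type $n$ invariant of $m$-component string links by evaluating on the $i$th component, which is precisely part (a).
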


\begin{proof}
By Lemma \ref{LA-lemma3} and Theorem \ref{locally-additive2}, the string link version of \ref{habiro} implies 
the string link version of \ref{links-modulo-knots}.

By Theorem \ref{rolfsen-lemma0'} the conjunction of the string link versions of \ref{vassiliev} and \ref{links-modulo-knots} implies
the string link version of \ref{habiro}.
\end{proof}

\begin{remark} \label{kontsevich}
In order to prepare for the proof of part (a) of Theorem \ref{string+rational} let us review some basics of 
the Kontsevich integral.
Since much of the literature on the Kontsevich integral focuses on the case of knots, we focus on references 
which cover the case of links (in the present remark, all knots and links are in $S^3$, and all tangles are in $I^3$).
\begin{itemize} 
\item A detailed exposition of M. Kontsevich's original (analytical) definition for knots appears in \cite{CDM}.
A generalization of the Kontsevich integral to tangles, along with a proof of its invariance for tangles, 
is sketched in \cite{LM1}*{\S1}; when guided by this sketch, some details of the proof can be found in \cite{BN} 
and, when guided by \cite{BN}, some further details can be found in \cite{CDM} and \cite{ChD}.
\item There is also a combinatorial version of the Kontsevich integral, first described by 
P. Cartier, Le--Murakami and S. Piunikhin (independently); its construction for ``non-associative tangles'', which include links, 
along with a proof of invariance, can be found in \cite{BN2} (concerning the definition see also \cite{CDM}).
\item That the two versions of the Kontsevich integral are equal for links is proved in \cite{LM1} (see also \cite{CDM}) 
and by a different method in \cite{AF}.
\item It is shown in \cite{LM2} (see also \cite{CDM}) that the combinatorial Kontsevich integral $Z$ of links assumes only 
rational values.
In more detail, let $V_m$ be the vector space of all rational formal linear combinations of chord diagrams on $m$ circles,
let $A_m$ be its quotient by the $4$-term and the $1$-term relations,%
\footnote{See \cite{CDM} concerning chord diagrams and the $4$-term and the $1$-term relations.}
which is graded by the number of chords: 
$A_m=\bigoplus_{n=0}^\infty A_{mn}$, and let $\hat A_m$ be the graded completion of $A_m$, that is, $\prod_{n=0}^\infty A_{mn}$.
Then, as explained in \cite{LM1}, \cite{LM2} and \cite{HaMa}, $Z(L)$ takes values in $\hat A_m$, where $m$ is the number
of components of $L$.
\item A proof sketch that the combinatorial Kontsevich integral is a universal rational%
\footnote{By a rational invariant we mean an invariant with values in a vector space over $\Q$.}
finite type invariant of links appears in \cite{BN2}.
As noted in \cite{LM2}*{\S5} and \cite{HaMa}*{Remark 3.3}, the proof in \cite{BN} that the analytic Kontsevich integral 
is a universal rational finite type invariant of knots (see also \cite{CDM} concerning this proof) caries over to 
the case of links.
\item The ``Fundamental Theorem'' that every weight system is the symbol of some finite type invariant 
is discussed in the case of links in \cite{CDM}*{Theorem in \S5.10.1 and the proof of Theorem 8.8.2}.
\item As noted in \cite{LM2}*{Theorem 5}, the combinatorial Kontsevich integral, when suitably normalized 
(see \cite{CDM}*{\S8.7.2} concerning the normalization), is multiplicative with respect to the connected sum 
of links along selected components (see \cite{M24-1'} concerning the latter) and the similarly defined 
connected sum of chord diagrams along selected components:
$Z(L\#_{i,j}L')=Z(L)\#_{i,j} Z(L')$.%
\footnote{In more detail, if $d$ is a chord diagram on $m$ circles and $i\in\{1,\dots,m\}$, then $d[i]$ denotes the chord 
diagram on $m-1$ circles and one arc, obtained by removing from the $i$th circle a small open arc disjoint from all chords.
This $d[i]$ is well-defined (i.e.\ does not depend on the choice of the small open arc) due to the $4$-term relation
(see \cite{CDM}*{proof of Lemma in \S4.4.3}).
Also, if $d$ and $d'$ are chord diagrams on oriented $1$-manifolds $\Theta$ and $\Theta'$ with $\partial\Theta=
\partial I=\partial\Theta'$, then $d\# d'$ denotes the chord diagram on the closed $1$-manifold 
$\Theta\cup_{\partial\Theta=\partial\Theta'}\Theta'$ consisting of all chords of $d$ and all chords of $d'$.
Both operators extend linearly over the graded completions of spaces of chord diagrams.
Finally, $d\#_{i,j}d'$ denotes $d[i]\#d'[j]$.}
The same formula for the analytical Kontsevich integral can be seen to follow from the multiplicativity of 
the preliminary Kontsevich integral under composition of tangles (see \cite{CDM}*{8.4.3}).
\item Since connected sum is single-valued for chord diagrams on one circle, $A_1$ is an algebra, and $\hat A_1$
is an algebra where every element of the form $1+$(terms of positive degrees) is invertible (cf.\ \cite{CDM}).
In particular, $Z(K)$ is invertible for every knot $K$.
\end{itemize}
\end{remark}

Let us reformulate part (a) of Theorem \ref{string+rational} in more detail:

\begin{theorem}\label{rational}
Rational finite type invariants separate links in $S^3$ if and only if rational finite type invariants separate knots in $S^3$ and 
rational finite type invariants, well-defined up to PL isotopy, separate PL isotopy classes of links in $S^3$.
\end{theorem}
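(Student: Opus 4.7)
The plan is to prove the two implications separately: the $(\Leftarrow)$ direction is essentially structural, relying on Corollary \ref{rolfsen-lemma}, while $(\Rightarrow)$ is the substantive direction and exploits the universality of the combinatorial Kontsevich integral $Z$ over $\Q$ together with the rational power series $\bar Z(L)$ of Example \ref{main-example}. For $(\Leftarrow)$, assume rational finite type invariants separate knots and that rational finite type invariants well-defined up to PL isotopy separate PL isotopy classes of links. Given links $L$ and $L'$ that are not ambient isotopic, if they are not PL isotopic then the second hypothesis directly produces a separating invariant. If they are PL isotopic but not ambient isotopic, then Corollary \ref{rolfsen-lemma} forces $K_L\ne K_{L'}$; hence some component knot $K_i(L)$ differs from $K_i(L')$, the first hypothesis yields a rational finite type invariant $v$ of knots with $v(K_i(L))\ne v(K_i(L'))$, and Lemma \ref{knot-link}(a) converts this into a separating rational finite type invariant $v_i^\lambda$ of links.

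For $(\Rightarrow)$, assume rational finite type invariants separate links. The assertion about knots is immediate, since knots are $1$-component links and rational finite type invariants of knots are exactly rational finite type invariants of $1$-component links. The assertion about PL isotopy classes is the substantive one. Since $Z$ is a universal rational finite type invariant (Remark \ref{kontsevich}), the hypothesis upgrades to: $Z$ is injective on ambient isotopy classes of links in $S^3$. Following Example \ref{main-example}, I set
\[
  \bar Z(L) \;:=\; Z(L)\cdot\bigl(Z(K_1)\cdots Z(K_m)\bigr)^{-1},
\]
where $K_1,\dots,K_m$ are the components of $L$ and the ``product'' denotes the Hashizume connected-sum action of $\hat A_1$ on $\hat A_m$ at the respective component. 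Each $Z(K_i)=1+(\text{positive-degree terms})$ is invertible in $\hat A_1$, and the actions at distinct components commute (their generators are supported on disjoint circles), so $\bar Z(L)$ is a well-defined element of $\hat A_m$ whose degree-$n$ part depends polynomially on the degree-$\le n$ parts of $Z(L)$ and of each $Z(K_i)$; by Corollary \ref{product}(a) and Lemma \ref{knot-link}(a), every coefficient of $\bar Z$ is a rational finite type invariant of $L$.

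Two properties of $\bar Z$ complete the argument. First, $\bar Z$ is invariant under insertion of a local knot $J$ into the $i$-th component of $L$: the multiplicativity of $Z$ under Hashizume connected sum (Remark \ref{kontsevich}) multiplies both $Z(L)$ and $Z(K_i)$ by $Z(J)$ at the $i$-th slot, and these factors cancel in $\bar Z$. Hence every coefficient of $\bar Z$ is a rational finite type invariant well-defined up to PL isotopy. Second, $\bar Z$ separates PL isotopy classes: if $\bar Z(L)=\bar Z(L')$, clearing the denominators yields $Z(L^\sharp)=Z(L^\flat)$, where $L^\sharp$ is obtained from $L$ by inserting $K_i(L')$ as a local knot in the $i$-th component and $L^\flat$ is defined symmetrically; injectivity of $Z$ makes $L^\sharp$ and $L^\flat$ ambient isotopic, and since $L^\sharp$ is PL isotopic to $L$ and $L^\flat$ to $L'$, the links $L$ and $L'$ are PL isotopic. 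The contrapositive produces the desired separating invariant.

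The main obstacle is the bookkeeping around the Kontsevich integral: one must verify, from the sources compiled in Remark \ref{kontsevich}, that $Z$ is genuinely multiplicative under Hashizume connected sum in the chosen normalization, that the connected-sum actions of $\hat A_1$ at distinct components commute so that $\bar Z(L)$ is unambiguous, and that the degree-$n$ part of $\bar Z$ has finite type. None of this requires new ideas, but it demands a careful cross-check against the normalization of $Z$ used throughout.
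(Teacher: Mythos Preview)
Your proof is correct and follows essentially the same approach as the paper's. The paper's $(\Rightarrow)$ argument uses the same stabilized links $A'=A\#K_B$, $B'=B\#K_A$ (your $L^\sharp$, $L^\flat$) and observes that $K_{A'}=K_{B'}$ makes the denominators of $\bar Z(A')=\bar Z(B')$ equal, hence $Z(A')=Z(B')$; your ``clear the denominators'' phrasing is the same computation, and the $(\Leftarrow)$ direction in both cases is the direct appeal to Corollary~\ref{rolfsen-lemma}.
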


\begin{proof}
Let $\bar Z(L)=\dfrac{Z(L)}{Z(K_1)\cdots Z(K_m)}$, where $K_1,\dots,K_m$ are the components of $L$,
and the fraction is understood more precisely as $\big(\cdots\big(Z(L)\#_{1,1}Z(K_1)^{-1}\big)\cdots\big)\#_{n,1}Z(K_1)^{-1}$.
Then $\bar Z(L)$ is invariant under PL isotopy; also its coefficients are easily seen to be finite type invariants.

Let us show that the rational version of \ref{habiro} implies the rational version of \ref{links-modulo-knots}.
Suppose that $A$ and $B$ are links in $S^3$ which are not separated by rational finite type invariants,
well-defined up to PL isotopy.
Then in particular $\bar Z(A)=\bar Z(B)$.
Let $A'=A\#K_B$ and $B'=B\#K_A$.
Then $A'$ is PL isotopic to $A$, so $\bar Z(A')=\bar Z(A)$; and similarly $\bar Z(B')=\bar Z(B)$.
On the other hand, $K_{A'}$ is ambient isotopic to $K_{B'}$, so the denominators of the equal fractions 
$\bar Z(A')$ and $\bar Z(B')$ are equal.
Hence so are their numerators $Z(A')$ and $Z(B')$.
Now by the universality of $Z$, the rational version of \ref{habiro} implies that $A'$ is ambient isotopic to $B'$.
Hence $A$ is PL isotopic to $B$.

By Theorem \ref{rolfsen-lemma0} the conjunction of the rational versions of \ref{vassiliev} and \ref{links-modulo-knots} implies 
the rational version of \ref{habiro}.
\end{proof}

\section{Reduced Conway polynomial} \label{conway}

The {\it Conway polynomial} $\nabla_L$ of a link $L$ in $S^3$ is a normalized sign-refined version of the Alexander polynomial;
it can be defined for instance in terms of Seifert matrices (see \cite{Lic}).
The Conway polynomial is characterized (see \cite{Lic}*{Theorem 8.6}) by two axioms:
\begin{gather*}\nabla_{\text{unknot}}=1,\\
\nabla_{L_+}(z)-\nabla_{L_-}(z)=z\nabla_{L_0}(z),\tag{C}\label{conway-skein}
\end{gather*}
where $L_0$, $L_+$ and $L_-$ agree outside a small ball, and inside this ball they are as follows:%
\footnote{As explained in \S\ref{fti}, $L_+$ and $L_-$ can be distinguished from each other without using the plane diagram.}
\smallskip

\begin{center}
\includegraphics[width=0.4\linewidth]{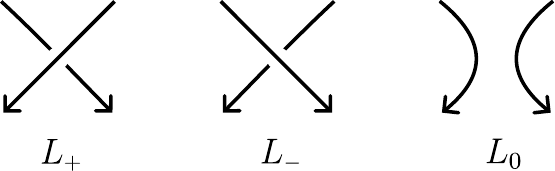}
\end{center}
\smallskip

Implicit in this figure is the singular link $L$ (see the figure related to formula (\ref{vassiliev-extension})).
Let us note that $L_0$ has one more component than $L_+$ and $L_-$ if the intersection in $L$ is a self-intersection of some component, 
and one less component if the intersection involves distinct components.
We will refer to $L_+$ and $L_-$ as the positive and negative {\it resolutions} of $L$, and $L_0$ as the {\it smoothing} of $L$.

Suppose that we are given a class $\CC$ of links whose Conway polynomials we know, and a link $L$ whose Conway polynomial we want
to compute.
Let us define a {\it computation tree} for $L$ with respect to $\CC$.
A computation tree $T_0$ of order $0$ is a homotopy $h_t$ from $L$ to some $L'\in\CC$ through links and singular links with one double point.
The singular links of $h_t$ are called the {\it buds} of $T_0$.
A computation tree $T_n$ of order $n\ge 1$ consists of a computation tree $T_{n-1}$ of order $n-1$, and, for each bud $\Lambda$ of $T_{n-1}$, of 
a homotopy $h_t^\Lambda$ from the smoothing $\Lambda_0$ of $\Lambda$ to some $\Lambda'\in\CC$ through links and singular links with one double point.
The singular links of each $h_t^\Lambda$ are called the {\it buds} of $T_n$.
We also call $L$ the {\it root}, the homotopies added when forming $T_k$ the {\it order $k$ branches}, and their final links, which are
in $\CC$, the {\it order $k$ leaves} of $T_n$.
 
Since every link is homotopic to a trivial link, for every link $L$ there exists a computation tree of an arbitrarily high order, 
whose leaves are trivial links. 
If we want the computation to be guaranteed to terminate in finitely many steps, a slightly more elaborate construction is needed 
(see Remark \ref{no-buds} below).
 
\begin{lemma} {\rm (cf.\ \cite{Lic}*{Proposition 8.7})} \label{lickorish}
(a) $\nabla_L(z)=0$ for any split link $L$.

(b) The Conway polynomial of an $m$-component link $L$ is of the form 
\[\nabla_L(z)=z^{m-1}\big(c_0+c_1z^2+c_2z^4+\dots+c_rz^{2r}\big).\]

(c) $c_0(K)=1$ for a knot $K$, and $c_0(L)=\lk(L)$ for a $2$-component link $L$.
\end{lemma}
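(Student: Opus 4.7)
The plan is to prove the three parts in an interlocking induction based on the skein relation~(\ref{conway-skein}) and the computation trees just introduced.

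For (b), I would induct on the order of a computation tree for $L$ with leaves in the class $\CC$ of trivial links. At each bud the skein reads $\nabla_{L_+}=\nabla_{L_-}+z\nabla_{L_0}$: the resolutions $L_\pm$ have the same number $m$ of components as $L$, and the smoothing $L_0$ has $m\pm 1$ components, with the $+$ sign occurring exactly when the double point is a self-intersection of one component. By the inductive hypothesis, $\nabla_{L_-}$ already has the form $z^{m-1}p(z^2)$, while $z\nabla_{L_0}$ has the form $z^{m\pm 1}q(z^2)$, which is again $z^{m-1}$ times a polynomial in $z^2$. The base case consists of trivial links, where, conditional on (a), it reduces to $\nabla=1$ for the unknot. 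Part (c) then follows by noting that under a crossing change between the two distinct components of a two-component link, both $\lk$ and $c_0$ change by $\pm 1$: for $\lk$ by definition, and for $c_0$ because the smoothing of an inter-component crossing is a knot whose Conway polynomial starts with $1$. Since $c_0$ and $\lk$ agree on the unlink, and every two-component link is reached from the unlink by inter-component crossing changes, they agree everywhere.

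For (a), I would first prove the key case $\nabla_{U_2}=0$. Applying the skein to a standard two-crossing diagram of the positively linked Hopf link $H_+$ at one crossing: one resolution is $H_+$ itself, the other is the two-component unlink $U_2$ (the two crossings cancel by a Reidemeister~II move), and the smoothing is the unknot, yielding $\nabla_{H_+}-\nabla_{U_2}=z$. The same manipulation on the oppositely linked Hopf link $H_-$ gives $\nabla_{H_-}-\nabla_{U_2}=-z$. Combined with the orientation-reversal identity $\nabla_{H_-}=-\nabla_{H_+}$---valid for two-component links and derivable independently from the skein---one obtains $2\nabla_{U_2}=0$. For $m\ge 3$, a clasp introduction between two components of $U_m$, followed by the skein, reduces $\nabla_{U_m}$ to terms involving $\nabla_{U_{m-1}}$, which vanishes by induction on $m$. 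For a general split link $L=L_1\sqcup L_2$, I would use a computation tree acting only inside $L_1$ and fixing $L_2$: all buds and all smoothings remain split (self-intersections of $L_1$ preserve the split from $L_2$), and all leaves are of the form $U_k\sqcup L_2$. A nested induction on the total component count then propagates $\nabla=0$ from the leaves back to the root.

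The main obstacle is to verify that the split structure persists through the nested induction in (a) without circular use of (a) and the form asserted in (b). This is largely bookkeeping, but it must be arranged so that at each stage (b) is invoked only for links where the relevant cases of (a) have already been handled, and so that the ``simpler'' links entering the induction hypothesis are genuinely smaller in the chosen complexity measure.
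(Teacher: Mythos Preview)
Your approaches to (b) and (c) are sound and close to the paper's: both use computation trees with trivial-link leaves, and the bookkeeping you describe goes through. The real issue is part (a), where your route is both more complicated than necessary and contains a gap.

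The gap is the claimed identity $\nabla_{H_-}=-\nabla_{H_+}$. You assert it is ``derivable independently from the skein,'' but you do not indicate how, and there is no evident way to obtain it from the two axioms alone without either invoking a Seifert-matrix argument or first establishing facts (such as part (a) itself, or the parity structure in (b)) that would make the derivation circular. The general orientation-reversal behaviour of $\nabla$ is certainly true, but its standard proofs go through the determinant definition, not the skein characterization. Your subsequent reductions for $U_m$ and for general split links then inherit this gap, and the ``nested induction on the total component count'' is not clearly well-founded either, since smoothings of self-intersections \emph{increase} the number of components.

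The paper (following Lickorish) handles (a) in one stroke and avoids all of this. Given a split link $L$ separated by a $2$-sphere $S$, band-sum a component from each side across $S$; introducing a crossing between the two strands of the band gives a skein triple with $\Lambda_0=L$. The two resolutions $\Lambda_+$ and $\Lambda_-$ differ by a full twist of the band, and this twist can be undone by rotating one of the complementary balls of $S$ through $2\pi$---an ambient isotopy, since $S$ meets the link only in those two strands. Hence $\nabla_{\Lambda_+}=\nabla_{\Lambda_-}$ and $z\,\nabla_L=0$. This disposes of every split link at once, with no induction on components, no Hopf-link computation, and no orientation-reversal identity. Once (a) is in hand this way, your arguments for (b) and (c) run cleanly with trivial-link leaves.
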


\begin{proof}[Proof (sketch). (a)] Represent $L$ as $\Lambda_0$, where $\Lambda_+$ and $\Lambda_-$ are ambient isotopic.
\end{proof}

\begin{proof}[(b)] We need to show that the coefficient of $\nabla_L$ at $z^n$ is zero if $n<m-1$ or $n$ has the same parity as $m$.
If $n<m-1$, there is a computation tree for $L$ of order $n$ whose all leaves are split links.
In general there is a computation tree for $L$ of order $n$ whose leaves are split links and unknots; each leaf of an order $\equiv m\pmod 2$
has an even number of components, and so cannot be an unknot.
\end{proof}

\begin{proof}[(c)] Use any computation tree for $L$ of order $0$, respectively $1$, whose leaves are split links and unknots.
\end{proof}

\begin{remark} \label{hhh}
It is easy to see that for $3$-component links $c_0(L)=ab+bc+ca$, where $a,b,c$ are the linking numbers
of the $2$-component sublinks (cf.\ \cite{Liv}).
For any number of components, $c_0(L)$ is a symmetric polynomial in the pairwise linking numbers $l_{ij}$.
Namely, $c_0(L)=\det(\Lambda^{(p)})$ for any $p\in\{1,\dots,m\}$, where $\Lambda=(\lambda_{ij})$ is 
the $m\x m$ matrix with entries
\[\lambda_{ij}=\begin{cases}-l_{ij},&\text{if }i\ne j\\
\sum_{k\ne i}l_{ik}&\text{if }i=j
\end{cases}\] and $\Lambda^{(p)}$ denotes the $(m-1)\x(m-1)$ matrix obtained by removing from $\Lambda$
the $p$th row and the $p$th column (Hosokawa, Hartley and Hoste; see \cite{Le}*{Proposition 3.2} for a proof 
and \cite{Masb} for the references).
Hartley and Hoste also reformulated the same expression as a sum over all spanning trees $T$ of the 
complete graph $K_m$: \[c_0(L)=\sum_T\prod_{\{i,j\}\in E(T)}l_{ij}.\]
\end{remark}

\begin{lemma} \label{conway-coefficients}
For an $m$-component link $L$ each $c_n(L)$ is of type $m-1+2n$ and of colored type $2n$.
\end{lemma}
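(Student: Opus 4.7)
The plan is to compute $c_n^\times$ explicitly on a singular link and then verify both vanishing conditions by a single degree count. The key observation is that the Conway skein relation (\ref{conway-skein}) is literally the Vassiliev resolution rule (\ref{vassiliev-extension}) applied to $\nabla$, with the oriented smoothing $L_0$ playing the role of the unresolved double point and the prefactor $z$ collected as an overall weight. A straightforward induction on the number of double points then yields
\[\nabla^\times_L(z) = z^k\,\nabla_{L_0}(z)\]
for any singular link $L$ with $k$ double points, where $L_0$ denotes the link obtained by simultaneously smoothing all of them. Since the resolutions of $L$ all have the same number of components $m$ as $L$ itself, extracting the coefficient of $z^{m-1+2n}$ gives
\[c_n^\times(L) = [z^{m-1+2n-k}]\,\nabla_{L_0}(z),\]
and by Lemma~\ref{lickorish}(b) this vanishes whenever the exponent $m-1+2n-k$ is strictly less than $m_0-1$, where $m_0$ is the number of components of $L_0$.

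For the type-$(m-1+2n)$ claim I would take $k \geq m+2n$, whence $m-1+2n-k \leq -1$ and the coefficient vanishes trivially. For the colored-type-$(2n)$ claim I would take $L$ to be a link map with $k=2n+1$ double points, necessarily all self-intersections of single components. The essential geometric input here is that an oriented smoothing at a self-crossing is a local move internal to one component and cannot merge it with any other component; hence each of the $m$ components of $L$ contributes at least one component to $L_0$, so $m_0\geq m$. Then $m_0-1\geq m-1>m-2=m-1+2n-k$, so the coefficient again vanishes. I do not anticipate any real obstacle: once the iterated skein formula is in hand, both assertions reduce to one-line degree counts, the colored case relying only on the elementary fact that smoothings of self-crossings in a link map cannot decrease the total number of components.
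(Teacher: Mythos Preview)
Your proof is correct and follows essentially the same route as the paper's: both derive the iterated skein identity $\nabla^\times_L=z^k\nabla_{L_0}$ from (\ref{conway-skein}) and (\ref{vassiliev-extension}), deduce the type bound from divisibility by $z^k$, and obtain the colored type bound from the observation that smoothing only self-crossings leaves at least $m$ components so that $\nabla_{L_0}$ is divisible by $z^{m-1}$. The paper phrases the last step via a coloring argument, but the content is identical to your component-count.
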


The first assertion is well-known \cite{BN}*{Theorem 2}, and the second is known in a slightly different form
\cite{ShYa}*{Lemma 3.5}.

\begin{proof} The skein relation (\ref{conway-skein}) is reminiscent of the formula (\ref{vassiliev-extension}) in \S\ref{fti} 
describing the extension of a link invariant to singular links.
The two formulas together imply the relation 
$\nabla^\x_{\includegraphics[width=0.4cm]{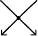}}=z\nabla^\x_{\includegraphics[width=0.4cm]{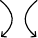}}$, 
where $\nabla^\x$ is the standard extension of $\nabla$ to singular links.
It follows that $\nabla^\x_\Lambda$ is divisible by $z^{n+1}$ for any singular link $\Lambda$ with $n+1$ double points.
Therefore the coefficient of $\nabla_L$ at $z^n$ is a type $n$ invariant.

Now suppose that $\Lambda$ is a singular link with $m$ components and $n+1$ double points, whose all double points are 
self-intersections of components.
Let us color the components of $\Lambda$ in distinct colors.
Then after smoothing each double point we obtain a non-singular link $\Lambda_{0\dots0}$ which is still colored in $m$ colors 
(each component being colored only in one color, and each of the $m$ colors being used).
This means that $\Lambda_{0\dots0}$ has at least $m$ components.
Then by Lemma \ref{lickorish}(b) $\nabla_{\Lambda_{0\dots0}}$ is divisible by $z^{m-1}$.
Hence $\nabla^\x_\Lambda$ is divisible by $z^{(m-1)+(n+1)}$.
Since $\Lambda$ was an arbitrary singular link with $n+1$ double points that are all self-intersections of components,
the coefficient of $\nabla_L$ at $z^{m-1+n}$ is a colored type $n$ invariant.
\end{proof}

\begin{remark} \label{no-buds}
For every link $L$ there exists a finite computation tree for $L$ which has no buds, and whose leaves are trivial links.
Namely, given a plane diagram $D$ of $L$, this $D$ is also a plane diagram of a trivial link $L'$ (see \cite{PS}*{Theorem 3.8}).%
\footnote{Thinking of $D$ as lying in the $xy$ plane, each component of $L'$ consists of arc which projects homeomorphically 
onto the $z$ axis and a straight line segment whose projection onto the $xy$-plane is disjoint from the crossings of $D$ 
(cf.\ \cite{PS}*{Figure 3.10}). 
The projections of the components on the $z$ axis are pairwise disjoint.}
In order to get from $L$ to $L'$, we only need to switch some of the crossings in $D$.
For each singular link $\Lambda$ in this homotopy, its smoothing $\Lambda_0$ has a plane diagram which has one less 
crossing than $D$.
Then we repeat the same construction for this plane diagram, and so on.
This process terminates after finitely many steps, since the number of crossings in the plane diagrams being considered 
decreases in each step.
One consequence of this computation is that the degree of $\nabla_L$ is bounded from above by the number of crossings in $D$.
\end{remark}

\begin{lemma} \label{insertion} {\rm (cf.\ \cite{Lic}*{Proposition 16.2(ii)})} 
$\nabla_{L\#_{i,j} L'}=\nabla_L\nabla_{L'}$.
\end{lemma}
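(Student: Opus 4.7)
The plan is to proceed by induction on the complexity of a plane diagram of $L$ (with $L'$ fixed), driven by the Conway skein relation~(\ref{conway-skein}), with Lemma~\ref{lickorish}(a) supplying the base case.

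For the base case take $L$ to be a trivial $m$-component link. If $m=1$ then $\nabla_L=1$ and $L\#_{i,j}L'$ is ambient isotopic to $L'$, so both sides of the claimed identity equal $\nabla_{L'}$. If $m\ge 2$ then $L\#_{i,j}L'$ is split, since the $m-1$ components of $L$ other than the $i$-th are unknotted and unlinked from the rest and can be isolated in disjoint balls; hence $\nabla_{L\#_{i,j}L'}=0$ by Lemma~\ref{lickorish}(a), matching $\nabla_L\nabla_{L'}=0$.

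For the inductive step, fix a plane diagram $D$ of the $1$-beaded link $L[i]$ and glue it to some fixed plane diagram of $L'[j]$ to obtain a plane diagram of $L\#_{i,j}L'$. Choose any crossing of $D$ and apply the Conway skein relation at this crossing, simultaneously to $L$ and to $L\#_{i,j}L'$. Since the crossing sits entirely inside the $L$-side of the Hashizume sum, one obtains the identifications $(L\#_{i,j}L')_\pm=L_\pm\#_{i,j}L'$ and $(L\#_{i,j}L')_0=L_0\#_{i',j}L'$, where $i'$ denotes the component of $L_0$ that inherits the two endpoints of the opened arc of $L[i]$ (the labelling is forced unless the smoothed crossing is a self-crossing of the $i$-th component that disconnects it into two, in which case $i'$ is precisely the piece containing the arc endpoints). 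Subtracting the two instances of the skein relation, the defect $f(L,i):=\nabla_{L\#_{i,j}L'}-\nabla_L\nabla_{L'}$ satisfies
\[ f(L_+,i)-f(L_-,i)=z\,f(L_0,i'). \]

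To finish I will invoke the termination scheme of Remark~\ref{no-buds}: finitely many crossing changes on $D$ turn $L[i]$ into a trivial $1$-beaded link, and each smoothing strictly decreases the crossing count of $D$. Iterating the skein identity for $f$ therefore expresses $f(L,i)$ as a $\Z[z]$-linear combination of values of $f$ on trivial $1$-beaded links, each of which vanishes by the base case, so $f(L,i)=0$. The main delicacy lies in the bookkeeping of the marked component under smoothings, namely in verifying that with the convention above the identification $(L\#_{i,j}L')_0=L_0\#_{i',j}L'$ really holds up to ambient isotopy fixing $L'$; this is routine once one draws the smoothing in a $1$-beaded picture of $L[i]$, and is the only technical obstacle to making the argument fully rigorous.
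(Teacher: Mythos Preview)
Your proof is correct and follows essentially the same approach as the paper's: both fix $L'$, run the skein-driven computation tree of Remark~\ref{no-buds} on the $L$-side only, and use that the leaves are trivial links (where the identity is immediate). The paper phrases this as applying the identical tree to $\nabla_L$ and to $\nabla_{L\#_{i,j}L'}$ and comparing the outputs, while you package the same computation via the defect function $f$; your explicit tracking of the marked component $i\mapsto i'$ under smoothing is exactly the content of the paper's remark that ``the contents of the small disk is kept intact.''
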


Here $L\#_{i,j} L'$ is the connected sum of $L$ and $L'$ along selected components (see \cite{M24-1'}).

\begin{proof}[Proof (sketch)]
The computation tree for $L$ described in Remark \ref{no-buds} yields that
$\nabla_L=\sum_{k=0}^n z^{2k}\sum_{i=1}^{r_k}\epsilon_{ki}\nabla_U$, where $U$ is the unknot and $\epsilon_{ki}=\pm1$.
If the same computation tree is applied to a plane diagram of $L\#_{i,j} L'$ obtained from the diagram $D$ of $L$ by adjoining 
a diagram of $L'$ lying in a small disk, so that the contents of the small disk is kept intact, then it yields
$\nabla_{L\#_{i,j} L'}=\sum_{k=0}^n z^{2k}\sum_{i=1}^{r_k}\epsilon_{ki}\nabla_{L'}$, where $r_k$ and $\epsilon_{ki}$ are same as before.
Since $\nabla_U=1$, the two expressions combine to yield the desired result.
\end{proof}

Since $c_0(K)=1$ for every knot $K$, we have the formal power series
\[\bar\nabla_L(z):=\dfrac{\nabla_L(z)}{\nabla_{K_1}(z)\cdots\nabla_{K_m}(z)},\] 
where $K_1,\dots,K_m$ are the components of $L$.

From Lemma \ref{insertion} we obtain

\begin{corollary} \cite{Tr2}, \cite{Ro4} \label{locknot} $\bar\nabla_L$ is invariant under PL isotopy.
\end{corollary}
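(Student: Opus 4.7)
The plan is to use the characterization of PL isotopy mentioned earlier in the excerpt: two links are PL isotopic if and only if they are equivalent under the equivalence relation generated by ambient isotopy and insertion of local knots. Since $\bar\nabla_L$ is manifestly ambient-isotopy invariant (as a ratio of ambient isotopy invariants), it suffices to check invariance under insertion of a local knot into a single component.

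First I would fix notation: let $L$ be an $m$-component link with components $K_1,\dots,K_m$, and suppose $L'$ is obtained from $L$ by inserting a local knot $J$ into the $i$-th component. Then, viewing $J$ as a $1$-component link, we may write $L' = L \#_{i,1} J$, while the $i$-th component of $L'$ is the knot $K_i \# J$; the other components are unchanged.

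Next I would apply Lemma \ref{insertion} twice. On the one hand, it gives $\nabla_{L'} = \nabla_L \cdot \nabla_J$. On the other hand, applied to the knots $K_i$ and $J$, it gives $\nabla_{K_i \# J} = \nabla_{K_i} \cdot \nabla_J$. Dividing numerator and denominator, we obtain
\[
\bar\nabla_{L'}(z) = \frac{\nabla_L(z)\,\nabla_J(z)}{\nabla_{K_1}(z)\cdots\nabla_{K_i}(z)\,\nabla_J(z)\cdots\nabla_{K_m}(z)} = \bar\nabla_L(z),
\]
where the factor $\nabla_J(z)$ cancels. To make sense of this cancellation, one notes that each $\nabla_{K_j}(z)$ is a unit in $\Z[[z]]$ because $c_0(K_j)=1$ by Lemma \ref{lickorish}(c), so every knot polynomial appearing in the denominator is invertible as a formal power series; in particular $\nabla_J$ is invertible, justifying the cancellation.

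There is no real obstacle here — this is genuinely a two-line computation once Lemma \ref{insertion} is in hand, with the only subtlety being the triviality check that $\nabla_{K_j}$ is invertible so that $\bar\nabla_L$ is a well-defined element of $\Z[[z]]$ (or, if one prefers, of the quotient field of $\Z[z]$). The conceptual content is entirely contained in the multiplicativity of $\nabla$ under the Hashizume connected sum, which is why this invariance was already observed by Traldi and Rolfsen.
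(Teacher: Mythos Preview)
Your proof is correct and is essentially the same as the paper's: the paper derives the corollary directly ``from Lemma~\ref{insertion}'' in one line, and you have simply spelled out the details of that derivation (using the characterization of PL isotopy by insertion of local knots, applying Lemma~\ref{insertion} to both numerator and denominator, and cancelling the common factor $\nabla_J$).
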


\begin{proposition} \label{reduced-coefficients}
If $L$ is an $m$-component link, then $\bar\nabla_L$ is of the form 
\[\bar\nabla_L(z)=z^{m-1}\big(\alpha_0+\alpha_1z^2+\alpha_2z^4+\dots\big),\]
with each $\alpha_i(L)=c_i(L)-\big(\alpha_{i-1}(L)c_1(K)+\dots+\alpha_0(L)c_i(K)\big)$, where
$K=K_1\#\dots\#K_m$.
\end{proposition}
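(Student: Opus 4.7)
The plan is to reduce this to a straightforward manipulation of formal power series. First I would use Lemma \ref{insertion} (applied $m-1$ times) together with the fact that each $K_i$ is a knot to conclude that
\[\nabla_{K_1}(z)\cdots\nabla_{K_m}(z)=\nabla_K(z),\]
so that the denominator of $\bar\nabla_L$ is itself a Conway polynomial. Since $K$ is a knot, Lemma \ref{lickorish}(b),(c) gives $\nabla_K(z)=1+c_1(K)z^2+c_2(K)z^4+\cdots$, i.e.\ a polynomial in $z^2$ with constant term $1$. Meanwhile Lemma \ref{lickorish}(b) applied to $L$ yields $\nabla_L(z)=z^{m-1}(c_0+c_1z^2+c_2z^4+\cdots)$.

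Next, substituting $w=z^2$, the fraction $\bar\nabla_L(z)/z^{m-1}$ becomes $p(w)/q(w)$, where $p(w)=\sum_i c_iw^i$ and $q(w)=1+\sum_{i\ge 1}c_i(K)w^i$. Because $q(w)$ has constant term $1$, it is invertible in $\Z[[w]]$, so the quotient is a well-defined formal power series $\alpha_0+\alpha_1w+\alpha_2w^2+\cdots$ in $w=z^2$. Multiplying this back by $z^{m-1}$ establishes the claimed form
\[\bar\nabla_L(z)=z^{m-1}(\alpha_0+\alpha_1z^2+\alpha_2z^4+\cdots).\]

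Finally, I would read off the recursion by comparing coefficients in $p(w)=q(w)(\alpha_0+\alpha_1w+\alpha_2w^2+\cdots)$. The coefficient of $w^i$ on the right is $\alpha_i+\alpha_{i-1}c_1(K)+\alpha_{i-2}c_2(K)+\cdots+\alpha_0 c_i(K)$, and on the left it is $c_i$, so solving for $\alpha_i$ gives exactly the formula
\[\alpha_i(L)=c_i(L)-\bigl(\alpha_{i-1}(L)c_1(K)+\cdots+\alpha_0(L)c_i(K)\bigr).\]
There is no substantive obstacle: the only two ingredients are Lemma \ref{insertion} (to collapse the product of knot polynomials into $\nabla_K$) and Lemma \ref{lickorish}(b),(c) (to guarantee the parity structure and the normalization $c_0(K)=1$ that makes $\nabla_K$ a unit in $\Z[[z^2]]$). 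Everything else is coefficient-chasing.
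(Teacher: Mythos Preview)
Your proposal is correct and is essentially the same argument as the paper's: both use Lemma \ref{insertion} to collapse the denominator to $\nabla_K$, then exploit $c_0(K)=1$ from Lemma \ref{lickorish} to invert $\nabla_K$ as a power series in $z^2$ and read off the recursion. The paper compresses your coefficient comparison into the single identity $\bar\nabla_L=\nabla_L-\bar\nabla_L(\nabla_K-1)$, but this is just a repackaging of the same step.
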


\begin{proof} By Lemma \ref{insertion} $\nabla_{K_1}\cdots\nabla_{K_m}=\nabla_K$.
Then the definition of $\bar\nabla_L$ implies 
$\bar\nabla_L=\nabla_L-\bar\nabla_L(\nabla_K-1)$.
The assertion follows from this formula and Lemma \ref{lickorish}(b).
\end{proof}

\begin{corollary} If $L$ is a $2$-component link, then $\alpha_0(L)=c_0(L)$ is its linking number and 
$\alpha_1(L)=c_1(L)-c_0(L)\big(c_1(K_1)+c_1(K_2)\big)$ is its generalized Sato--Levine invariant.
\end{corollary}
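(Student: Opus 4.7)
The plan is to apply Proposition \ref{reduced-coefficients} in the case $m=2$ and then read off the two formulas.

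First, specializing Proposition \ref{reduced-coefficients} to $m=2$ with $K=K_1\#K_2$, the coefficient $\alpha_0$ carries an empty subtraction sum, so $\alpha_0(L)=c_0(L)$; and the next coefficient is $\alpha_1(L)=c_1(L)-\alpha_0(L)\,c_1(K)$. By Lemma \ref{lickorish}(c) the first of these equals $\lk(L)$, which handles the claim about $\alpha_0$. For the second, Lemma \ref{insertion} gives $\nabla_K=\nabla_{K_1}\nabla_{K_2}$, and since by Lemma \ref{lickorish}(b) the Conway polynomial of a knot has only even powers of $z$ with constant term $1$, the coefficient of $z^2$ in $\nabla_{K_1}\nabla_{K_2}$ is simply $c_1(K_1)+c_1(K_2)$. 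Substituting yields
\[\alpha_1(L)=c_1(L)-c_0(L)\bigl(c_1(K_1)+c_1(K_2)\bigr),\]
which is the asserted formula.

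It remains to identify this quantity with the generalized Sato--Levine invariant. The cleanest route is to invoke the literature cited in Example \ref{conway-ex}: the same expression is written down explicitly in \cite{Tr2}, \cite{KL}, \cite{AMR} and \cite{NO}, where it is shown to be (or defined to be) the generalized Sato--Levine invariant of a two-component link. So this step is essentially a pointer to the existing definition, combined with the observation that $\bar\nabla_L$ is a PL-isotopy invariant (Corollary \ref{locknot}), which matches the known PL-isotopy invariance of the generalized Sato--Levine invariant and pins down the correct normalization.

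The main obstacle, such as there is one, is the final identification, since different authors have normalized the generalized Sato--Levine invariant differently; one has to check that the sign and additive normalization here agree with the chosen reference (e.g.\ \cite{KL}), which is straightforward given the characterization of the generalized Sato--Levine invariant as the unique colored type $1$ invariant, up to type $0$ invariants, of two-component links with fixed linking number. The algebraic content of the corollary is purely a one-line computation from Proposition \ref{reduced-coefficients} and Lemma \ref{insertion}.
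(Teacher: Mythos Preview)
Your proposal is correct and matches the paper's approach: the paper gives no explicit proof for this corollary, treating it as an immediate specialization of Proposition \ref{reduced-coefficients} together with the pointer ``See Example \ref{conway-ex} concerning the generalized Sato--Levine invariant.'' Your derivation of the formula from Proposition \ref{reduced-coefficients} and Lemma \ref{insertion}, followed by the appeal to the literature cited in Example \ref{conway-ex} for the identification, is exactly what the paper intends.
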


See Example \ref{conway-ex} concerning the generalized Sato--Levine invariant.
The following crossing change formula for the generalized Sato--Levine invariant is well-known.

\begin{proposition} \label{beta-jump} {\rm \cite{Liv} (see also \cite{Tr2}*{Theorem 10.2}, \cite{KL}, \cite{AR})}
Let $L=(K_1,K_2)$ be a $2$-component link. 
Then the jump of $\alpha_1(L)$ under a self-intersection of $K_1$ is 
\[\alpha_1(L_+)-\alpha_1(L_-)=l_{\eta2}l_{\zeta2},\]
where $l_{ij}=\lk(K_i,K_j)$ and $K_\eta$, $K_\zeta$ are the components of the smoothing $K_0$ of the singular knot.%
\footnote{We may think of $\eta$ and $\zeta$ as the two primitive $3$rd roots of $1$.}
\end{proposition}

\begin{proof} We have 
\[\alpha_1(L_+)-\alpha_1(L_-)=\big(c_1(L_+)-c_1(L_-)\big)-c_0(L_+)c_0(K_0)=c_0(L_0)-c_0(L_+)c_0(K_0),\]
using that $c_0(L_+)=c_0(L_-)$.
Now \[c_0(L_0)=l_{\eta2}l_{\zeta2}+l_{\eta2}l_{\eta\zeta}+l_{\zeta2}l_{\eta\zeta}.\]
The latter two summands are canceled by $c_0(L_+)c_0(K_0)=l_{12}l_{\eta\zeta}$.
The remaining summand is the one in the statement.
\end{proof}

In order to similarly understand $\alpha_1(L)$ for a $3$-component link $L$, it is convenient to introduce a correction term.
Given a $3$-component link $L$, let us consider \[\gamma(L):=\alpha_1(L)-\sum_{(\Lambda,\Lambda')}\alpha_1(\Lambda)\alpha_0(\Lambda'),\]
where $(\Lambda,\Lambda')$ runs over all ordered pairs of distinct $2$-component sublinks of $L$.

\begin{proposition} \label{gamma} Let $L=(K_1,K_2,K_3)$ be a $3$-component link. 
Then the jump of $\gamma(L)$ under a self-intersection of $K_1$ is 
\[\gamma(L_+)-\gamma(L_-)=l_{23}(l_{\eta2}l_{\zeta3}+l_{\zeta2}l_{\eta3}),\]
where $l_{ij}=\lk(K_i,K_j)$ and $K_\eta$, $K_\zeta$ are the components of the smoothing $K_0$ of the singular knot.
\end{proposition}

\begin{proof} We have $\alpha_1(L)=c_1(L)-c_0(L)\big(c_1(K_2)+c_1(K_2)+c_1(K_3)\big)$.
Then 
\[\alpha_1(L_+)-\alpha_1(L_-)=\big(c_1(L_+)-c_1(L_-)\big)-c_0(L_+)c_0(K_0)=c_0(L_0)-c_0(L_+)c_0(K_0),\]
using that $c_0(L_+)=c_0(L_-)$.
The formula of Remark \ref{hhh} expresses $c_0(L_0)$ as a sum of 16 summands (corresponding to the 15 spanning trees of $K_4$).
It can be checked that 8 of these summands cancel with $c_0(L_+)c_0(K_0)$ and another 6 cancel with the jump of
the correction term.
The remaining 2 summands are the ones in the statement.
\end{proof} 

\begin{remark} It is observed in \cite{Msol}*{Corollary \ref{sol:3-comp-bar}} that $\gamma(L)$ is, up a certain polynomial in the pairwise
linking numbers, the coefficient at $z_1z_2z_3$ of the power series obtained by expanding the Conway potential function $\Omega_L(x_1,x_2,x_3)$
in Conway's variables $z_i=x_i-x_i^{-1}$.
Given this, Proposition \ref{gamma} becomes an easy consequence of Conway's first skein relation for $\Omega_L$
(cf.\ \cite{Msol}*{Proposition \ref{sol:jump}(a)}).
\end{remark}

\begin{corollary} \label{brunnianity}
(a) $\gamma(L)$ is not a function of any invariants of proper sublinks of $L$.

(b) $\lambda(L)\gamma(L)$ is not a function of any invariants of proper sublinks of $L$, where $\lambda(L)$
is the product of the pairwise linking numbers of $L$.
\end{corollary}

Part (a) can also be proved in a very different way, using that if the pairwise linking numbers vanish, then $c_3(L)=\mu(123)^2$ \cite{Co2}*{Theorem 5.1}.
A different proof of part (b) is given in \cite{Msol}*{Proposition \ref{sol:brunnianity}(b)}.

\begin{proof} Let $\mu_2$ be a $C_2$-move decomposed as follows:
\[\mu_2\:L_+\overset{\mu_1}{\rightsquigarrow} L_-\overset{\nu_2}{\rightsquigarrow} L'_-\overset{\mu_1^{-1}}{\rightsquigarrow} L'_+,\]
where $\mu_1$ is a positive self-intersection of the first component, $\mu_1^{-1}$ is the reverse motion of the first component, 
and $\nu_2$ is an ambient isotopy with support in the second component.
Let $K_3$ denote the third component of $L_\pm$, $L'_\pm$, let $K_2$ (resp.\ $K_{2'}$) denote the second component of $L_\pm$ (resp.\ $L'_\pm$),
and let $K_\eta$, $K_\zeta$ be the components of the smoothing of the singular knot of $\mu_1$ and $\mu_1^{-1}$.
Writing $l_{ij}=\lk(K_i,K_j)$, by Proposition \ref{gamma} \[\gamma(L'_+)-\gamma(L_+)=l_{23}(l_{\eta2'}l_{\zeta3}+l_{\zeta2'}l_{\eta3})-
l_{23}(l_{\eta2}l_{\zeta3}+l_{\zeta2}l_{\eta3})=l_{23}(l_{\zeta3}-l_{\eta3}),\]
where the last equality is up to a sign, but with an appropriate choice of $\nu_2$ the sign is positive.
Let $\mu_3$ be a $C_3$-move decomposed as follows:
\[\mu_3\:L_+\overset{\mu_2}{\rightsquigarrow} L'_+\overset{\nu_3}{\rightsquigarrow} \tilde L'_+\overset{\mu_2^{-1}}{\rightsquigarrow}\tilde L_+,\]
where $\mu_2^{-1}$ is the motion of the first two components reverse to $\mu_2$, and $\nu_3$ is an ambient isotopy with support in the third component.
Let $K_{\tilde 3}$ denote the third component of $\tilde L_\pm$, $\tilde L'_\pm$.
Then \[\gamma(\tilde L_+)-\gamma(L_+)=l_{23}(l_{\zeta\tilde 3}-l_{\eta\tilde 3})-l_{23}(l_{\zeta3}-l_{\eta3})=2l_{23},\]
where the last equality is up to a sign, but with an appropriate choice of $\nu_3$ the sign is positive.
On the other hand, it is easy to see that $\mu_3$ restricted to any proper sublink of $L_+$ can be effected by an ambient isotopy.
Hence if $v(L)$ is any function of invariants of proper sublinks of $L$, then $v(\tilde L_+)-v(L_+)=0$.
This proves (a), and a similar argument proves (b).
\end{proof}

\begin{proposition} \label{reduced-coefficients2}
For an $m$-component link $L$ each $\alpha_n(L)$ is 

(a) of type $m-1+2n$ and of colored type $2n$;

(b) of colored type $2n-1$ if $n>0$.
\end{proposition}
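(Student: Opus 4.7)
The plan is to prove (a) and (b) simultaneously by induction on $n$, using the recursion
\[\alpha_n=c_n-\sum_{i=0}^{n-1}\alpha_i\,C_{n-i}\]
of Proposition \ref{reduced-coefficients}, where $C_k(L)$ denotes the coefficient of $z^{2k}$ in $\nabla_{K_1}(z)\cdots\nabla_{K_m}(z)$ viewed as an invariant of $L$.

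For (a), the base case $\alpha_0=c_0$ is handled by Lemma \ref{conway-coefficients}. In the inductive step, Lemma \ref{conway-coefficients} gives $c_n$ of type $m-1+2n$ and colored type $2n$. Since for knots colored type coincides with type, the argument behind Lemma \ref{knot-link}(a) shows that each $c_j(K_i)$ is of type and colored type $2j$ as an invariant of $L$; Corollary \ref{product} then gives $C_k$ of type and colored type $2k$, and one more application of Corollary \ref{product} to $\alpha_i\cdot C_{n-i}$ (with the inductive hypothesis) yields the desired bounds on $\alpha_n$.

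For (b), the key input is a divisibility estimate: for any colored singular link $L$ with $r$ double points, $C_k^\times(L)=0$ whenever $k<r$. This will follow from the factorization
\[\nabla_K^\times(L)=\prod_{j=1}^m\nabla_{K_j}^\times\bigl(K_j^{\mathrm{sing}}\bigr),\]
where $K_j^{\mathrm{sing}}$ is the $j$th component with its $r_j$ self-intersections (so $\sum_j r_j=r$). The factorization is proved directly from the alternating-sum formula for the extension to singular links: since $\nabla_{K_j(L)}$ depends only on the $j$th component, the signs of resolutions at double points of distinct components decouple. Iterating the Conway skein relation, $\nabla_{K_j}^\times(K_j^{\mathrm{sing}})=z^{r_j}\nabla_{(K_j^{\mathrm{sing}})_{0\cdots 0}}$, and since the smoothing has $r_j+1$ components, Lemma \ref{lickorish}(b) makes this factor divisible by $z^{2r_j}$; hence $\nabla_K^\times(L)$ is divisible by $z^{2r}$.

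With this in hand, let $L$ be colored singular with $2n$ double points. The same skein iteration shows $\nabla_L^\times(L)$ is divisible by $z^{m+4n-1}$, forcing $c_n^\times(L)=0$ for $n\ge 1$. I then apply Lemma \ref{leibniz} to each term $(\alpha_iC_{n-i})^\times(L)$ in the recursion. For $i=0$ only the partition $S=\Delta_L$, $T=\emptyset$ survives (because $\alpha_0=c_0$ has colored type $0$), and the surviving summand is $\alpha_0(L_{++\cdots+})\,C_n^\times(L)=0$ by the divisibility estimate. For $0<i<n$, the inductive hypothesis forces $|T|\le 2i-1$ in any nonvanishing summand, while the divisibility estimate forces $|S|\le n-i$; combined with $|S|+|T|=2n$, this yields $2n\le n+i-1$, contradicting $i<n$. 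Hence $\alpha_n^\times(L)=0$, completing the induction. The main obstacle is establishing the factorization lemma and juggling the Leibniz bookkeeping; once these are in hand, the result reduces to standard divisibility properties of $\nabla$.
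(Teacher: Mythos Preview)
Your argument for (a) is correct and essentially the same as the paper's.

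Your argument for (b) has a genuine gap. The claim that the full oriented smoothing $(K_j^{\mathrm{sing}})_{0\cdots 0}$ of a singular knot with $r_j$ double points has $r_j+1$ components is false in general. Smoothing one self-intersection of a connected component does split that component in two, but after that first smoothing the remaining double points need not be self-intersections: they may now join the two new components, so the next smoothing can \emph{merge} rather than split. For instance, a singular knot with two double points and chord diagram of type $XYXY$ smooths to a single knot, not a $3$-component link. Consequently your ``divisibility estimate'' $C_k^\times(L)=0$ for $k<r$ fails already at $r=2$, $k=1$: with both double points on $K_1$ arranged as above, $\nabla_K^\times(L)=z^2\nabla_{(K_1)_{00}}\cdot\prod_{j\ge2}\nabla_{K_j}=z^2(1+O(z^2))$, so $C_1^\times(L)=1$. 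The parallel claim that $\nabla_L^\times$ is divisible by $z^{m+4n-1}$ fails for the same reason: the full smoothing $L_{0\cdots0}$ is only guaranteed to have at least $m$ components (by the colouring argument of Lemma~\ref{conway-coefficients}), not $m+2n$, so one only gets divisibility by $z^{m-1+2n}$ and $c_n^\times(L)$ need not vanish.

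The upshot is that neither $c_n^\times(L)$ nor $(\alpha_0 C_n)^\times(L)=c_0(L_{+\cdots+})\,C_n^\times(L)$ vanishes separately on a colored singular link with $2n$ double points; what vanishes is their \emph{difference}. Your Leibniz bookkeeping for the terms with $0<i<n$ is salvageable using the correct bound $|S|\le 2(n-i)$ (which still gives $|S|+|T|\le 2n-1$), so the real issue is isolating the cancellation in $c_n-c_0\,c_n^*$. The paper handles this by packaging the two terms together as the coefficient of $z^{m-1+2n}$ in $\hat\nabla_L:=\nabla_L-z^{m-1}\lambda(L)\nabla_L^*$, extended to colored links via the tree formula for $c_0$ (Remark~\ref{hhh}); when $L_{0\cdots0}$ has exactly $m$ components the leading coefficients of $\nabla_{L_{0\cdots0}}$ and $\lambda(L)\nabla_{L_{0\cdots0}}^*$ agree by that formula, forcing $\hat\nabla_\Lambda^\times$ to be divisible by $z^{m+2n}$.
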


\begin{proof}[Proof. (a)] By Lemma \ref{insertion} $\nabla_K=\nabla_{K_1}(z)\cdots\nabla_{K_m}(z)$, where $K_1,\dots,K_m$ 
are the components of $L$ and $K=K_1\#\dots\#K_m$.
Hence $c_i(K)=\sum_{i_1+\dots+i_m=i}c_{i_1}(K_1)\cdots c_{i_m}(K_m)$.
By Lemma \ref{conway-coefficients} each $c_k(K_j)$ is a type $2k$ invariant of $K_j$, and hence 
a type $2k$ invariant of $L$ (cf.\ Lemma \ref{knot-link}(a)).
Then by Corollary \ref{product}(a) $c^*_i(L):=c_i(K)$ is a type $2i$ invariant of $L$.

By Proposition \ref{reduced-coefficients} each 
$\alpha_n(L)=c_n(L)-\big(\alpha_{n-1}(L)c^*_1(L)+\dots+\alpha_0(L)c^*_n(L)\big)$.
By Lemma \ref{conway-coefficients} $\alpha_0(L)=c_0(L)$ is of type $m-1$ and of colored type $0$.
Now it follows by induction, using Corollary \ref{product}(a), that $\alpha_n(L)$ is a type $m-1+2n$ invariant 
and a colored type $2n$ invariant of $L$.
\end{proof}

\begin{proof}[(b)] We will use the notation from the proof of (a).
We need to show that $\alpha_k(L)$ is of colored type $2k-1$ for each $k>0$.
Suppose that this is known for $i=1,\dots n-1$.
Then it follows from Corollary \ref{product}(a) that $\big(\alpha_{n-1}(L)c^*_1(L)+\dots+\alpha_1(L)c^*_{n-1}(L)\big)$
is of colored type $2n-1$.
Since $\alpha_n(L)=c_n(L)-\big(\alpha_{n-1}(L)c^*_1(L)+\dots+c_0(L)c^*_n(L)\big)$, it remains to show that 
$c_n(L)-c_0(L)c^*_n(L)$ is a colored type $2n-1$ invariant of $L$.

Let $\nabla^*_L=\nabla_K$. 
Thus we have $\nabla^*_L=c_0^*(L)+c_1^*(L)z^2+\dots+c_n^*(L)z^{2n}$ for some $n=n(L)$, where $c_0^*(L)=1$.
Then $c_n(L)-c_0(L)c^*_n(L)$ is the coefficient at $z^{m-1+2n}$ of the polynomial 
$\hat\nabla_L:=\nabla_L-z^{m-1}c_0(L)\nabla^*_L$.
More generally, for a link $L$ colored in $m$ colors so that no color is omitted, let 
$\nabla^*_{L}=\nabla_{L^1}(z)\cdots\nabla_{L^m}(z)$, where $L^i$ is the sublink of $L$ of the $i$th color,
and let $\hat\nabla_L=\nabla_L-z^{m-1}\lambda(L)\nabla^*_L$, where $\lambda(L)=\sum_T\prod_{\{i,j\}\in E(T)}\lk(L^i,L^j)$,
the sum being over all spanning trees $T$ of the complete graph $K_m$ (cf.\ Remark \ref{hhh}).

Let $\Lambda$ be a singular link with $m$ components and $2n$ double points, whose all double points are 
self-intersections of components.
The standard extension $\hat\nabla^\x$ of $\hat\nabla$ to singular links is easily seen to satisfy
$\hat\nabla^\x_\Lambda=\nabla^\x_\Lambda-z^{m-1}\lambda(\Lambda)\nabla^{*\x}_\Lambda$,
where $\lambda$ is extended by continuity to singular links which are link maps.
This implies the relation 
$\nabla^\x_{\includegraphics[width=0.4cm]{1x.pdf}}=z\nabla^\x_{\includegraphics[width=0.4cm]{1op.pdf}}$.
Let $\Lambda_{0\dots0}$ be the non-singular link obtained by smoothing all the double points of $\Lambda$.
As explained in the proof of Lemma \ref{conway-coefficients} $\Lambda_{0\dots0}$ is colored in $m$ colors
so that no color is omitted.

If $\Lambda_{0\dots0}$ has precisely $m$ components, then it follows from Remark \ref{hhh} that $\hat\nabla_{\Lambda_{0\dots0}}$ 
is divisible by $z^m$.
If $\Lambda_{0\dots0}$ has more than $m$ components, then by Lemma \ref{lickorish}(b) $\nabla_{\Lambda_{0\dots0}}$ 
is divisible by $z^m$ and also $\nabla_{\Lambda_{0\dots0}^i}$ is divisible by $z$ for at least one index $i$.
Hence again $\hat\nabla_{\Lambda_{0\dots0}}$ is divisible by $z^m$.

Since $\hat\nabla_{\Lambda_{0\dots0}}$ is divisible by $z^m$, $\hat\nabla^\x_\Lambda$ is divisible by $z^{m+2n}$.
Since $\Lambda$ was an arbitrary singular link with $2n$ double points whose all double points are self-intersections 
of components, the coefficient of $\hat\nabla_L$ at $z^{m-1+2n}$ is a colored type $2n-1$ invariant.
\end{proof}

\begin{proposition} \label{ctype2} $\alpha_2(L)$ is not of colored type $2$, nor even of type $(2,2)$ for two-component links 
of linking number $0$.
\end{proposition}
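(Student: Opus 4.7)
Plan.

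By Proposition~\ref{reduced-coefficients} and Lemma~\ref{lickorish}(c), every two-component link $L$ with $\lk(L)=0$ satisfies $\alpha_0(L)=0$ and
$$\alpha_2(L)=c_2(L)-c_1(L)\bigl(c_1(K_1)+c_1(K_2)\bigr).$$
Since every colored type $n$ invariant is of type $(n,n)$, it is enough to exhibit a singular two-component link $\Lambda$ whose three double points are all self-intersections of $K_1$ (so every resolution has linking number zero) and on which $\alpha_2^\x(\Lambda)\ne 0$; this will show simultaneously that $\alpha_2$ is not of type $(2,2)$ and not of colored type $2$ on $\lk=0$ links.

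I set $u=c_1$ and $v(L)=c_1(K_1)+c_1(K_2)$, so that $\alpha_2=c_2-uv$ on $\{\lk=0\}$. The invariant $v$ depends only on the isotopy types of the components, so $v^\x$ vanishes on any singular link containing a mixed double point, and, because $c_1$ on knots is of type $2$ by Lemma~\ref{conway-coefficients}, also on any singular link with three or more self-intersections of a single component. Consequently, in the Leibniz expansion
$$(uv)^\x(\Lambda)=\sum_{S\sqcup T=\Delta_\Lambda}u^\x(\Lambda_{S+})\,v^\x(\Lambda_{T-})$$
of Lemma~\ref{leibniz}, only the six terms with $|S|\in\{1,2\}$ survive; the remaining factors, as well as $c_2^\x(\Lambda)=[z^2]\nabla_{\Lambda_{000}}$ (where $\Lambda_{000}$ denotes the threefold smoothing of $\Lambda$), are computable by iterating the Conway skein $\nabla_{L_+}-\nabla_{L_-}=z\nabla_{L_0}$. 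For instance $v^\x(\Lambda_{T-})$ equals $\lk(P,Q)$ when $|S|=1$ (with $P\cup Q$ the smoothing of $K_1$ at the lone remaining double point) and $c_0(K_{1,00})$ when $|S|=2$ (with $K_{1,00}$ the corresponding double smoothing of the $K_1$-component). Combined with Lemma~\ref{lickorish}(b) and the Hosokawa--Hartley--Hoste formula of Remark~\ref{hhh}, this expresses $\alpha_2^\x(\Lambda)$ as an explicit polynomial in the pairwise linking numbers among the $K_1$-smoothing arcs and $K_2$.

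For the explicit construction I would take $K_2$ to be a small unknot and $K_1$ a singular curve whose three double points are positioned so that the canonical threefold smoothing of $K_1$ produces exactly two loops $P$ and $Q$ (so that $\Lambda_{000}$ is a three-component link and $c_2^\x(\Lambda)=c_0(\Lambda_{000})=-\lk(P,K_2)^2$, using $\lk(P,K_2)+\lk(Q,K_2)=\lk(K_1,K_2)=0$), and thread $K_1$ through $K_2$ in a Whitehead-like clasp with $\lk(P,K_2)\ne 0$. The main obstacle is engineering the geometry so that the Leibniz correction $(uv)^\x(\Lambda)$ does not cancel $c_2^\x(\Lambda)$: the key asymmetry is that every factor $v^\x(\Lambda_{T-})$ depends only on linkings internal to the $K_1$-smoothing arcs (because $v$ is a sum of knot invariants), whereas $c_0(\Lambda_{000})$ couples those same arcs with $K_2$ through the spanning-tree formula of Remark~\ref{hhh}; by choosing the threading so that $K_2$ contributes an asymmetric spanning-tree term not matched by the Leibniz correction, one forces $\alpha_2^\x(\Lambda)\ne 0$.
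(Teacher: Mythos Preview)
Your reduction via Lemma~\ref{leibniz} is essentially the paper's approach, and the formula $\alpha_2=c_2-c_1\bigl(c_1(K_1)+c_1(K_2)\bigr)$ on $\{\lk=0\}$ is used there as well. One small slip: for the term $|S|=0$ it is $u^\x(\Lambda)$ that must vanish (since $\Lambda_{T-}$ is nonsingular and $v(\Lambda_{T-})$ need not be zero); this follows because $c_1$ on $2$-component links is of colored type $2$ by Lemma~\ref{conway-coefficients}, which you should say explicitly.

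The real gap is the example. You describe $K_1$ only qualitatively (``three double points so that the threefold smoothing has two loops'', ``a Whitehead-like clasp'') and then assert nonvanishing by appeal to an ``asymmetric spanning-tree term''. This is not a proof: the six surviving Leibniz terms depend on the component counts of the partial smoothings $\Lambda_{\pm00}$, $\Lambda_{\pm0\pm}$, etc., and these counts are not determined by your description. In particular, the paper imposes the extra condition that smoothing any \emph{two} of the three double points yields a $2$-component (not $4$-component) link; this forces each $|S|=1$ factor $c_1^\x(\Lambda_{+\x\x})=c_0(\Lambda_{+00})=\lk(\Lambda_{+00})=0$ and each $|S|=2$ factor $v^\x(\Lambda_{\x\x-})=c_0(K_{1,00-})=1$, collapsing your six terms to three and yielding the clean formula
\[
\alpha_2^\x(\Lambda)=c_0(\Lambda_{000})-c_0(\Lambda_{++0})-c_0(\Lambda_{+0+})-c_0(\Lambda_{0++}).
\]
The paper then realises this by a concrete $K_1$---a $C^1$-perturbation of the double cover $S^1\to S^1$, whose complement has four bounded regions $A,B,C,D$---and lets $K_2$ link them with weights $a,b,c,d$ subject to $a+b+c+2d=0$. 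The spanning-tree formula of Remark~\ref{hhh} then gives $\alpha_2^\x(\Lambda)=(a+b+c+d)d+(a+d)^2+(b+d)^2+(c+d)^2$, which is $14$ at $(a,b,c,d)=(1,2,-3,0)$. Your proposal contains the right skeleton but is missing exactly this: a specific $K_1$ with controlled partial-smoothing combinatorics, and the explicit evaluation.
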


\begin{proof} For a two-component link $L$ of linking number $0$ we have $c_0(L)=0$ and hence 
$\alpha_2(L)=c_2(L)-c_1(L)\big(c_1(K_1)+c_1(K_2)\big)$.
Let $L_{\x\x\x}$ be a two-component singular link with $3$ double points and linking number $0$ such that all its double points 
are on the first component, and after smoothing any two of them we get a $2$-component (and not $4$-component) link.
If we regard $v_i(L):=c_1(K_i)$ as an invariant of $L$, then we have 
$\alpha_2^\x(L_{\x\x\x})=c_2^\x(L_{\x\x\x})-(c_1v_1)^\x(L_{\x\x\x})-(c_2v_2)^\x(L_{\x\x\x})$.
Clearly $c_2^\x(L_{\x\x\x})=c_0(L_{000})$, where $L_{000}$ is the $3$-component link obtained by smoothing all double points 
of $L_{\x\x\x}$.
Lemma \ref{leibniz} expresses $(c_1v_i)^\x(L_{\x\x\x})$ as the sum of all products 
of the form $c_1^\x(L_\alpha)v_i^\x(L_\beta)$, where $L_\alpha$ is obtained by resolving positively some number $i$ of 
the double points of $L_{\x\x\x}$ and $L_\beta$ by resolving negatively the remaining $3-i$ double points of $L_{\x\x\x}$.
By Lemma \ref{conway-coefficients} $c_1(L)$ is of colored type $2$, so $c_1^\x(L_{\x\x\x})=0$.
But since $v_2(L)$ depends only on the second component, $v_2^\x$ vanishes on any singular link that has at least one 
double point on the first component, so from Lemma \ref{leibniz} we get $(c_1v_2)^\x(L_{\x\x\x})=0$.
Since $v_1$ is of colored type $2$ (cf.\ Lemma \ref{knot-link}(a)), we have $v_1^\x(L_{\x\x\x})=0$.
If the first double point of $L_{\x\x\x}$ is resolved positively or negatively, 
then the resulting singular link $L_{\pm\x\x}$ satisfies $c_1^\x(L_{\pm\x\x})=c_0(L_{\pm00})$, where $L_{\pm00}$ is the 
two-component link obtained by smoothing both double points of $L_{\pm\x\x}$, but $c_0(L_{\pm00})=\lk(L_{\pm00})=\lk(L_{\pm\x\x})=0$.
Also, $v_1^\x(L_{\pm\x\x})=c_0(K_{\pm00})$, where $K_{\pm00}$ is the first component of $L_{\pm00}$,
but we know that $c_0(K_{\pm00})=1$ (see Lemma \ref{lickorish}(c)).
Similar arguments apply if the second or the third double point of $L_{\x\x\x}$ is resolved positively or negatively.
Thus from Lemma \ref{leibniz} we get 
\begin{multline*}
(c_1v_1)^\x(L_{\x\x\x})=c_1^\x(L_{++\x})v_1^\x(L_{\x\x-})+c_1^\x(L_{+\x+})v_1^\x(L_{\x-\x})+c_1^\x(L_{\x++})v_1^\x(L_{-\x\x})\\
=c_1^\x(L_{++\x})+c_1^\x(L_{+\x+})+c_1^\x(L_{\x++})=c_0(L_{++0})+c_0(L_{+0+})+c_0(L_{0++}),
\end{multline*}
where $+$ and $-$ indicate the positive and the negative resolution of the corresponding double point, and $0$ its smoothing.
Thus \[\alpha_2^\x(L_{\x\x\x})=c_0(L_{000})-c_0(L_{++0})-c_0(L_{+0+})-c_0(L_{0++}).\]

Let $\phi\:S^1\to\R^2$ be a self-transverse $C^1$-approximation with $3$ double points of the double cover 
$S^1\to S^1$ of the clockwise oriented unit circle $S^1\subset\R^2$, and let $A,B,C,D$ denote the $4$ bounded components of 
$\R^2\but\phi(S^1)$ such that $D$ contains the origin.
The composition $K_{\x\x\x}\:S^1\xr{\phi}\R^2\subset\R^3$ is a singular knot.
Let $K'\:S^1\emb S^3\but K_{\x\x\x}(S^1)$ be a knot in the complement of $K_{\x\x\x}$ linking the clockwise oriented 
boundaries of $A,B,C,D$ with linking numbers $a,b,c,d$, respectively, so that $a+b+c+2d=0$.
Finally, define $L_{\x\x\x}$ to be the union of $K_{\x\x\x}$ and $K'$.
Then $\lk(L_{\x\x\x})=a+b+c+2d=0$ and by smoothing any two of the $3$ double points of $L_{\x\x\x}$ 
we get a $2$-component link, so that our previous formula for $\alpha_2^\x(L_{\x\x\x})$ applies.
It is easy to see that by smoothing any one of the double points of $K_{\x\x\x}$, we get a link of linking number $1$,
and by smoothing all three we get a link of linking number $0$.
Then using the formula for $c_0$ of a $3$-component link (see Remark \ref{hhh}), we find that
$c_0(L_{000})=(a+b+c+d)d$ and one of $c_0(L_{++0})$, $c_0(L_{+0+})$ and $c_0(L_{0++})$ equals 
$(a+b+d)(c+d)+(a+b+d)+(c+d)$ and the other two are obtained from it by cyclically permuting $a$, $b$ and $c$.
Since $(a+b+d)+(c+d)=0$ and $(a+b+d)(c+d)=-(c+d)^2$, we get
\[\alpha_2^\x(L_{\x\x\x})=(a+b+c+d)d+(a+d)^2+(b+d)^2+(c+d)^2.\]
This expression is nonzero e.g.\ for $a=1$, $b=2$, $c=-3$, $d=0$. 
\end{proof}

It was shown in \cite{MR1}*{proof of Theorem 2.2} that $\alpha_1$ is invariant under $1$-quasi-isotopy.
By Proposition \ref{reduced-coefficients2} and Theorem \ref{2.2} $\alpha_n(L)$ is invariant under
$(2n-1)$-quasi-isotopy.
However, by Proposition \ref{ctype2} the following cannot be deduced from Theorem \ref{2.2}.

\begin{theorem} \label{conway-quasi} $\alpha_n(L)$ is invariant under $n$-quasi-isotopy.
\end{theorem}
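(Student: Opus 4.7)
The plan is to proceed by induction on $n$. The base case $n=0$ is immediate, since by Remark \ref{hhh} $\alpha_0=c_0$ is a polynomial in the pairwise linking numbers, which are $0$-quasi-isotopy (link homotopy) invariants.

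For the inductive step, assume $\alpha_k$ is invariant under $k$-quasi-isotopy for every $k<n$. It suffices to show $\alpha_n(L_+)=\alpha_n(L_-)$ whenever $L_\pm$ are the two resolutions of an $n$-quasi-embedding $f\:mS^1\to S^3$ with a single double point $x=f(p)=f(q)$, witnessed by nested $P_0=\{x\}\subset\cdots\subset P_n$ and arcs $J_0\subset\cdots\subset J_n$. Because each $J_i$ is a connected arc, the double point is necessarily a self-intersection of a unique component $K_1$. Write the Conway smoothing $L_0=P\sqcup Q\sqcup K_2\sqcup\cdots\sqcup K_m$, labeling so that $P$ is the component homotopic, in a neighborhood of $f(J_0)\cup\{x\}$, to the loop $f(J_0)\cup\{x\}$; then $P$ is null-homotopic in $P_1$.

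Combining the Conway skein $\nabla_{L_+}-\nabla_{L_-}=z\nabla_{L_0}$ applied to $L$ with the analogous skein $\nabla_{K_1^+}-\nabla_{K_1^-}=z\nabla_{P\sqcup Q}$ applied to $K_1$, the definition $\bar\nabla_L=\nabla_L/\prod_j\nabla_{K_j}$ yields the exact identity
\[
\bar\nabla_{L_+}-\bar\nabla_{L_-}=z\cdot\frac{\nabla_P\nabla_Q}{\nabla_{K_1^+}}\Bigl(\bar\nabla_{L_0}-\bar\nabla_{L_-}\,\bar\nabla_{P\sqcup Q}\Bigr),
\]
where $\bar\nabla_{L_0}:=\nabla_{L_0}/(\nabla_P\nabla_Q\prod_{j\ge 2}\nabla_{K_j})$ and $\bar\nabla_{P\sqcup Q}:=\nabla_{P\sqcup Q}/(\nabla_P\nabla_Q)$. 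Expanding $U:=\nabla_P\nabla_Q/\nabla_{K_1^+}=\sum_l u_l z^{2l}$ (a power series with $u_0=1$) and setting
\[
V_j:=\alpha_j(L_0)-\sum_{i=0}^{j}\alpha_{j-i}(L_-)\,\alpha_i(P\sqcup Q),
\]
and comparing the coefficients of $z^{m-1+2k}$ on the two sides, this identity becomes
\[
\alpha_k(L_+)-\alpha_k(L_-)=\sum_{j=0}^{k-1}u_{k-1-j}\,V_j\qquad(k\ge 1).
\]
The outer inductive hypothesis forces the left-hand side to vanish for each $k<n$, and the resulting triangular system (solved successively for $k=1,2,\dots,n-1$) forces $V_j=0$ for $j=0,1,\dots,n-2$. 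Consequently $\alpha_n(L_+)-\alpha_n(L_-)=V_{n-1}$.

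It therefore remains to prove the geometric claim that $V_{n-1}=0$ for every $n$-quasi-embedding, equivalently the truncated multiplicativity
\[
\bar\nabla_{L_0}(z)\;\equiv\;\bar\nabla_L(z)\cdot\bar\nabla_{P\sqcup Q}(z)\pmod{z^{m+2n}}.
\]
This captures the geometric intuition that, up to $n$ levels of nesting, the smoothed link $L_0$ is rationally equivalent to the disjoint union of $L$ and the two-component link $P\sqcup Q$. I would prove it by an inner induction on the nesting level $i\le n$, using at step $i$ the null-homotopy of $P_i\cup f(J_i)$ in $P_{i+1}$ to improve the $z$-order of the discrepancy by $2$. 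The ingredients at each step are Lemma \ref{lickorish} (so that $\nabla$ on the intermediate links acquires additional factors of $z$ from split-off components produced by the deformation), Lemma \ref{insertion} (which governs how $\nabla$ behaves under Hashizume sum along the arcs being pushed), and the containment $f^{-1}(P_i)\subset J_i\subset K_1$ (ensuring the deformation does not interact with $K_2,\dots,K_m$). The main obstacle is this inner induction: one must verify carefully that each of the $n$ nesting levels actually contributes a full factor of $z^2$ to the discrepancy, thereby yielding the sharp bound $z^{m+2n}$ and hence $n$-quasi-isotopy invariance of $\alpha_n$, rather than the weaker $(2n-1)$-quasi-isotopy invariance that would follow from Proposition \ref{reduced-coefficients2} and Theorem \ref{2.2}.
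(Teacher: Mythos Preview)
Your algebraic reduction is correct and pleasant: the skein identity for $\bar\nabla$ and the triangular system in the $u_l$ do give
\[
\alpha_n(L_+)-\alpha_n(L_-)=V_{n-1}=\alpha_{n-1}(L_0)-\sum_{i=0}^{n-1}\alpha_{n-1-i}(L_-)\,\alpha_i(P\sqcup Q),
\]
once the outer induction hypothesis is in hand. But this is where the actual work begins, not where it ends. You explicitly leave the vanishing of $V_{n-1}$ as an ``inner induction'' to be carried out, and you yourself flag it as ``the main obstacle''. This is a genuine gap: the claim that each nesting level $P_i\hookrightarrow P_{i+1}$ buys a factor of $z^2$ in the discrepancy is precisely the content of the theorem, and nothing in your sketch explains \emph{why} it should. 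Lemma~\ref{lickorish} and Lemma~\ref{insertion} alone are not enough: a null-homotopy of $P$ in $P_1$ produces crossing changes whose smoothings are not, in general, split links or connect-sums with recognizable pieces, so there is no evident mechanism by which the $z$-order climbs.

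For comparison, the paper's argument does not proceed by induction on $n$ and does not pass through the quantities $V_j$ at all. Instead it builds (Lemma~\ref{trees}) a single computation tree of order $2n$ for the \emph{knot} $K_+$, with branches confined to $P_k\cup K_-$ at depth $2k$, whose leaves are split links, copies of $K_-$, and links of the form $K_-\cup\partial A$ with $\lk(\text{core}(A),K_-)=0$. The crucial ingredient you are missing is Lemma~\ref{annulus0}: $\nabla_{L\cup\partial A}=0$ whenever the core of the annulus $A$ has zero linking number with $L$. This is what disposes of the annulus-type leaves and is exactly what makes each pair of levels contribute $z^2$. The payoff is two relations $\nabla_{K_+}=R\nabla_{K_-}+z^{2n+1}P$ and $\nabla_{L_+}=R\nabla_{L_-}+z^{m+2n}Q_0$ with the \emph{same} polynomial $R$, from which the divisibility of $\bar\nabla_{L_+}-\bar\nabla_{L_-}$ by $z^{m+2n}$ is immediate. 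Your truncated multiplicativity $\bar\nabla_{L_0}\equiv\bar\nabla_{L_-}\bar\nabla_{P\sqcup Q}\pmod{z^{m+2n}}$ is a consequence of this, but I do not see how to establish it without something of comparable strength to the computation-tree construction together with the annulus lemma.
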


\begin{proof} Let $L$ be a singular link which is an $n$-quasi-embedding, and let $m$ be the number of its components.
Let $P_0,\dots,P_n$ and $J_0,\dots,J_n$ be as in the definition of an $n$-quasi-embedding.
We may assume that the $P_i$ are connected.
By passing to small regular neighborhoods we may assume that $P_1,\dots,P_n$ are connected $3$-manifolds with boundary,
and $P_0$ lies in a $3$-ball $P_0^+$ such that $P_0^+\cup L(J_0)$ is null-homotopic in $P_1$.
We may assume that the images of $L_+$, $L_-$ and $L_0$ lie in $P_0^+\cup L$.
For the purposes of the present proof let us redefine $P_0$ as $P_0^+$.
Let $K$ be the component of $L$ containing the double point.
Then $K_+$, $K_-$ and $K_0$ lie in $P_0\cup K=P_0\cup K_-$.

\begin{lemma} \cite{M24-3}*{Corollary \ref{part3:annulus}} 
\label{annulus0}
Let $L$ be a link in $S^3$ and let $A$ be an embedded annulus in $S^3\but L$ whose core has zero linking number with $L$.
Then $\nabla_{L\cup\partial A}(z)=0$.
\end{lemma}

\begin{lemma} \label{trees}
For each $k\le n$ there exists a computation tree $T_{2k}$ of order $2k$ for $K_+$, whose branches are homotopies with values 
in $P_k\cup K_-$ and whose leaves are split links, copies of $K_-$ and $3$-component links of the form $K_-\cup\partial A$, where $A$ is 
an embedded annulus in $P_k\but K_-$ whose core has zero linking number with $K_-$. 
\end{lemma}

\begin{proof} Let $T_0$ consist of the straight line homotopy between $K_+$ and $K_-$, which has one singular link, namely, $K$.
Arguing by induction, we may assume that $T_{2k}$ has been constructed.
Let $X$ be a bud of $T_{2k}$.
Let $Q_1,\dots,Q_\mu$ be the components of its smoothing $X_0$.
Thus $Q_1,\dots,Q_\mu$ lie in $P_k\cup K_-$, and one of them, say $Q_1$, contains $K_-\but P_k$.
Then $Q_1$ is homotopic to $K_-$ in $P_{k+1}\cup K_-$ by some homotopy $Q_{1,t}$, and the other ones are null-homotopic 
in $P_{k+1}$ by some homotopies $Q_{2,t},\dots,Q_{\mu,t}$.
Since $K_+$ is a knot, $X$ has an odd number of components, and then $X_0$ has an even number of components.
Thus $\mu\ge 2$.
Using the null-homotopy $Q_{2,t}$ and that $P_{k+1}$ is a $3$-manifold, it is easy to construct a homotopy $h_t^X$ with values in 
$P_{k+1}\cup K_-$ from $X_0$ to a split link.
Using again that $P_{k+1}$ is a $3$-manifold, we may assume that this homotopy goes only through links and singular links 
with one double point.
Let $T_{2k+1}$ consist of $T_{2k}$ and the homotopies $h_t^X$ for all buds $X$ of $T_{2k}$.
Let $Y$ be a bud of $T_{2k+1}$, that is, a singular link of some $h_t^X$.

Let us first consider the case where the double point of $Y$ is an intersection of $Q_{2,s}$ for some $s\in(0,1)$ with 
some $Q_i$, $i\ne 2$.
If $i\ge 3$, then the null-homotopies $Q_{2,t}|_{t\in [s,1]}$, and $Q_{i,t}$ can be combined to get a null-homotopy 
of $Q_{2,s}\cup Q_i$ in $P_{k+1}$.
This in turn yields a homotopy with values in $P_{k+1}\cup K_-$ from $Y_0$ to a split link.
If $i=1$ and $\mu=2$, then using the same homotopies $Q_{2,t}|_{t\in [s,1]}$ and $Q_{i,t}$ one can similarly construct
a homotopy with values in $P_{k+1}\cup K_-$ from $Y_0$ to $K_-$.
If $i=1$ and $\mu\ge 3$, then using the null-homotopy $Q_{3,t}$ it is easy to construct a homotopy with values in $P_{k+1}\cup K_-$ 
from $Y_0$ to a split link.

It remains to consider the case where the double point of $Y$ is a self-intersection of $Q_{2,s}$ for some $s\in(0,1)$.
If $\mu\ge 3$, then using the null-homotopy $Q_{3,t}$ it is easy to construct a homotopy with values in $P_{k+1}\cup K_-$ 
from $Y_0$ to a split link.
If $\mu=2$, then $Y_0$ is a $3$-component link consisting of $Q_0$ and two other knots $C$, $D$, and
the null-homotopy $Q_{2,t}|_{t\in[s,1]}$ can be used construct a homotopy with values in $P_{k+1}\cup K_-$ from $C$ to $-D$,
i.e.\ to $D$ with orientation reversed.
This in turn yields a homotopy with values in $P_{k+1}\cup K_-$ from $Y_0$ to a link of the form $Q_1\cup\partial A$, where
$A$ is an embedded annulus in $P_{k+1}$, disjoint from $Q_1$.
Since $P_{k+1}$ is connected, by a further homotopy with values in $P_{k+1}\cup K_-$ we may assume that the core of $A$ has 
zero linking number with $K_-$. 
\end{proof}

Let $T_{2n}$ be a computation tree given by Lemma \ref{trees}.
Since $K_0$ is a $2$-component link, each leaf of $T_{2n}$ of an odd order must have an even number of components, so it must be a split link.
Let $Z$ be a leaf of $T_{2n}$ of order $2k$ which is of the form $K_-\cup\partial A$.
Then the annulus $A$ lies in $P_k$, so in the case $k<n$ is it null-homotopic in $P_n$.
Then its core $\alpha$ has zero linking number with $L_-\but K_-$.
Since $\alpha$ also has zero linking number with $K_-$, we have $\lk(\alpha,\,L_-)=0$.
This does not work if $Z$ is a leaf of order $2n$, but in this case, since $P_n$ is connected, we can amend $Z$ by a homotopy 
with values in $P_n\cup K_-$ so that $\lk(\alpha,\,K_-)$ is any given number.
If we choose this number to be $-\lk(\alpha,\,L_-\but K_-)$, then we get $\lk(\alpha,\,L_-)=0$.
This amendment, when carried out for all leaves of $T_{2n}$ of order $2n$ which are of the form $K_-\cup\partial A$, results in a new
computation tree $T_{2n}'$, whose branches of orders $<2n$ are the same as those of $T_{2n}$.
By adding the identical homotopies of $L_-\but K_-$, we may regard $T_{2n}'$ as a computation tree of order $2n$ for $L_+$, whose leaves 
are split links, copies of $L_-$, and links of the form $L_-\cup\partial A$, where $A$ is an embedded annulus in $S^3\but L_-$ whose core
has zero linking number with $L_-$. 

Taking into account Lemma \ref{annulus0}, from $T_{2n}$ we get 
\[\nabla_{K_+}=\sum_{k=0}^n z^{2k}\Big(\sum_{i=1}^{r_k}\epsilon_{ki}\nabla_{K_-}\Big)+z^{2n+1}P\] for some $P\in\Z[z]$, 
where $r_0=1$ and $\epsilon_{01}=+1$, and for $k>0$ the signs $\epsilon_{ki}$ are the signs of some of the singular links in the homotopies 
that are branches of $T_{2n}$ of order $2k-1$.
The polynomial $P$ is a sum with signs of the Conway polynomials of the buds of $T_{2n}$.
Similarly, from $T_{2n}'$ we get \[\nabla_{L_+}=\sum_{k=0}^n z^{2k}\Big(\sum_{i=1}^{r_k}\epsilon_{ki}\nabla_{L_-}\Big)+z^{2n+1}Q\] 
for some $Q\in\Z[z]$, 
where the $r_k$ and the $\epsilon_{ki}$ are the same as in the previous sum, since they are determined by the branches of orders $<2n$, which are
shared by $T_{2n}$ and $T_{2n}'$.
The polynomial $Q$ is a sum with signs of the Conway polynomials of the buds of $T_{2n}'$.
These buds all have at least $m$ components, so by Proposition \ref{lickorish}(b) $Q=z^{m-1}Q_0$ for some $Q_0\in\Z[z]$.
Let $R=\sum_{k=0}^n z^{2k}\sum_{i=1}^{r_k}\epsilon_{ki}$.
Then $\nabla_{K_+}=R\nabla_{K_-}+z^{2n+1}P$ and $\nabla_{L_+}=R\nabla_{L_-}+z^{(m-1)+(2n+1)}Q_0$.
Hence
\[\frac{z^{1-m}\nabla_{L_+}}{\nabla_{K_+}}=
\frac{z^{1-m}\nabla_{L_-}R+z^{2n+1}Q_0}{\nabla_{K_-}R+z^{2n+1}P}=
\frac{z^{1-m}\nabla_{L_-}}{\nabla_{K_-}}+z^{2n+1}S\]
for some $S\in\Z[[z]]$.
Therefore $z^{1-m}\bar\nabla_{L_+}=z^{1-m}\bar\nabla_{L_-}+z^{2n+1}S_0$ for some $S_0\in\Z[[z]]$, and the assertion follows.
\end{proof}

\begin{remark} \label{conway-quasi-discussion} 
The most difficult case in the proof of Lemma \ref{trees} is a self-intersection of the null-homotopy of $Q_2$
in the absence of $Q_3$.
However, if we start from a {\it strong} $k$-quasi-isotopy, then this difficulty disappears (because a knot lying in a ball 
is null-homotopic within this ball), and one might even wonder if anything at all prevents building a computation tree of 
any order under the hypothesis of strong $1$-quasi-isotopy.
In fact, the only thing which may go wrong with it is the following: in the homotopy from $Q_1$ to $K_-$ we may get 
a self-intersection with $Q_1$.
If one lobe of the resulting singular link lies entirely within the ball $B_1$ (from the definition of a strong $1$-quasi-isotopy),
this is still not a problem, but it may happen that none of the two lobes lies in $B_1$.
\end{remark}

Since $c_k\equiv\alpha_k\pmod{\gcd(c_0,\dots,c_{k-1})}$, Theorem \ref{conway-quasi} implies the case $l=1$, and also 
the $2$-component case of the following

\begin{theorem} \label{conway-mu} \cite{MR2}*{Corollary 3.5}
Set $\lambda=\lceil\frac{(l-1)(m-1)}2\rceil$.
The residue class of $c_{\lambda+k}$ modulo $\gcd$ of $c_\lambda,\dots,c_{\lambda+k-1}$ 
and all $\bar\mu$-invariants of length $\le l$ is invariant under
$(\lfloor\frac\lambda{m-1}\rfloor+k)$-quasi-isotopy of $m$-component links.

(Here $\lceil x\rceil=n$ if $x\in [n-\frac12,n+\frac12)$, and
$\lfloor x\rfloor=n$ if $x\in (n-\frac12,n+\frac12]$ for $n\in\Z$.)
\end{theorem}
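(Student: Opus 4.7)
The plan is to adapt the proof of Theorem \ref{conway-quasi} in two directions: first, to allow the leaves of the computation tree to consist of $L_-$ together with the boundary circles of several disjointly embedded annuli in $P_N\setminus L_-$; and second, to replace the zero-linking-number hypothesis in Lemma \ref{annulus0} with a condition on vanishing of $\bar\mu$-invariants of length $\le l$. The exponents $\lambda$ and $\lfloor\lambda/(m-1)\rfloor$ in the statement will arise from the trade-off between the depth of quasi-isotopy that one is allowed to use and the minimum number of annuli that can be forced to appear in the leaves of a sufficiently deep computation tree.

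Given a singular link $L$ realizing an $N$-quasi-embedding with $N=\lfloor\lambda/(m-1)\rfloor+k$, I would refine the inductive construction of Lemma \ref{trees} to produce a computation tree $T_{2N}$ for $L_+$ of order $2N$ whose branches take values in $P_N\cup L_-$ and whose leaves at order $2N$ are either split links, copies of $L_-$, or of the form $L_-\cup\partial A_1\cup\cdots\cup\partial A_s$ with $s\ge \lambda-\lfloor\lambda/(m-1)\rfloor$. The refinement is that whenever a bud's smoothing has $\mu\ge 2$ extra components null-homotopic in $P_{j+1}$, rather than killing each component separately one pairs up $\mu-1$ of them into boundary circles of annuli in $P_{j+1}$ (using that $P_{j+1}$ is a connected $3$-manifold) and separates only the remaining one by a null-homotopy. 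Iterating through the $2N$ levels and tracking the minimum number of annuli that survive yields the claimed bound on $s$, where the factor $m-1$ in $\lambda$ reflects the number of components that can simultaneously contribute to the annulus count at each step.

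The main obstacle, and the crux of the argument, is a generalization of Lemma \ref{annulus0}: for an $m$-component link $L$ and disjoint annuli $A_1,\dots,A_s$ in $S^3\setminus L$, if all $\bar\mu$-invariants of length $\le l$ among the components of $L$ and the cores of the $A_i$ vanish, then $\nabla_{L\cup\partial A_1\cup\cdots\cup\partial A_s}(z)$ is divisible by $z^{m-1+2s+1}$ modulo the ideal generated by the $\bar\mu$-invariants of length $\le l$ of $L$. This should follow from iterated application of the Conway skein relation to unclasp each annulus in turn (each application contributing a factor $z^2$ by Lemma \ref{lickorish}(b) applied to the two new components), combined with a Cochran-style description of the higher-order obstruction to unclasping in terms of Massey products expressible via $\bar\mu$-invariants. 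With this in hand, multiplying the depth-$(2N+1)$ factor $z^{2N+1}$ by the per-leaf divisibility $z^{m-1+2s}$ produces an overall divisibility $z^{m+2(\lambda+k)}$ modulo Milnor invariants of length $\le l$. Performing the analogous computation for the single component $K$ of $L$ containing the double point to define the polynomial $R$, and then extracting the coefficient at $z^{m-1+2(\lambda+k)}$ in the resulting congruence $\nabla_{L_+}\equiv R\cdot\nabla_{L_-}$ exactly as at the end of the proof of Theorem \ref{conway-quasi}, yields the invariance of the residue class of $c_{\lambda+k}$ modulo $\gcd$ of $c_\lambda,\dots,c_{\lambda+k-1}$ and all $\bar\mu$-invariants of length $\le l$.
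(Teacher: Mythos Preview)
The paper does not actually contain a proof of Theorem \ref{conway-mu}; it is merely cited from \cite{MR2}*{Corollary 3.5}, and the surrounding text only observes that Theorem \ref{conway-quasi} recovers the special cases $l=1$ and $m=2$. So there is no ``paper's own proof'' to compare against, and in \cite{MR2} the argument proceeds along quite different lines: it passes through the relationship between $n$-quasi-isotopy and $k$-cobordism, Cochran's derived invariants $\beta^i$, and Traldi's theorem relating $\bar\mu$-invariants to the lowest coefficients of the Conway polynomial, rather than through any computation-tree refinement of the present paper's methods.

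Your proposal has two substantial gaps. First, the ``main obstacle'' you identify --- the claimed generalization of Lemma \ref{annulus0} asserting that $\nabla_{L\cup\partial A_1\cup\dots\cup\partial A_s}$ is divisible by $z^{m-1+2s+1}$ modulo $\bar\mu$-invariants of length $\le l$ --- is not something one can wave through with ``iterated application of the Conway skein relation\dots combined with a Cochran-style description''. The actual content here is essentially Traldi's theorem (see \cite{Tr1}, \cite{Le}) on the first nonvanishing coefficient of $\nabla$ for algebraically split links, and its extension modulo $\bar\mu$-invariants is precisely the hard input that \cite{MR2} imports. Your sketch does not supply this, and without it the rest of the argument has no foundation.

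Second, the combinatorics of your refined tree construction is not pinned down. You assert that leaves at depth $2N$ carry at least $s\ge\lambda-\lfloor\lambda/(m-1)\rfloor$ annuli, with ``the factor $m-1$\dots reflecting the number of components that can simultaneously contribute'', but you give no actual inductive invariant that tracks this count through the branching. In the proof of Lemma \ref{trees} the delicate case is a self-intersection of $Q_2$ when $\mu=2$, which produces a \emph{single} annulus and consumes one level of the quasi-isotopy filtration; there is no mechanism there that forces multiple annuli to accumulate, and your ``pairing up $\mu-1$ of them'' only applies when $\mu>2$, which you have not shown occurs often enough. The exponents in the statement come from a genuinely different bookkeeping in \cite{MR2}, not from the tree depth in the way you suggest.
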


One special case of Theorem \ref{conway-mu} not covered by Theorem \ref{conway-quasi} asserts that for $3$-component
links the residue class of $c_k$ modulo the greatest common divisor $\Delta_k$ of all $\bar\mu$-invariants of length 
$\le k+1$ is invariant under $\lfloor\frac k2\rfloor$-quasi-isotopy.
When $k=1$, this is saying that the residue class of $c_1$ modulo the $\gcd$ of the pairwise linking numbers
is invariant under $0$-quasi-isotopy, that is, link homotopy.
This residue class is also known as $\bar\mu(123)^2$ (Murasugi--Traldi \cite{Ms}, \cite{Tr1} and Cochran \cite{Co2};
see also \cite{Le}, \cite{Masb}).

Can one find an integer lift of $\bar\mu(123)^2$, invariant under link homotopy, among coefficients of rational functions
in $\bar\nabla_\Lambda$ for sublinks $\Lambda$ of $L$?
The answer is negative, since the desired integer invariant would be a finite type (specifically, type $4$) invariant 
of link homotopy which is not a function of the pairwise linking numbers, but this is impossible for $3$-component 
links by \cite{MT}.
Alternatively, using Proposition \ref{gamma} one can see it directly, by checking that the jump of $\gamma(L)$ 
cannot be canceled by the jump of any polynomial expression in the coefficients of $\nabla_\Lambda$, 
homogeneous of degree $4$.

\section*{Acknowledgements}

I would like to thank P. M. Akhmetiev, I. Dynnikov, M. Il'insky and L. Plachta for useful discussions and thoughtful remarks.



\end{document}